\title[Asymptotic behavior and control of a ``guidance by repulsion'' model]{Asymptotic behavior and control of a ``guidance by repulsion'' model}
\author{Dongnam Ko}  
\address{
\begin{flushleft}
Dongnam Ko
\end{flushleft}
\begin{flushleft}
DeustoTech, University of Deusto, 48007 Bilbao, Basque Country, Spain, \end{flushleft}
\begin{flushleft}
and
\end{flushleft}
\begin{flushleft}
Facultad de Ingenier\'ia, Universidad de Deusto, Avenida de las Universidades 24, 48007 Bilbao, Basque Country, Spain,
\end{flushleft}
\begin{flushleft}
dongnamko@deusto.es
\end{flushleft}
}
\author{Enrique Zuazua}
\address{
\begin{flushleft}
Enrique Zuazua
\end{flushleft}
\begin{flushleft}
Chair in Applied Analysis, Alexander von Humboldt-Professorship, 
\end{flushleft}
\begin{flushleft}
Department of Mathematics, Friedrich-Alexander-Universität Erlangen-Nürnberg,
91058 Erlangen, Germany,
\end{flushleft}
\begin{flushleft}
and
\end{flushleft}
\begin{flushleft}
DeustoTech, University of Deusto, 48007 Bilbao, Basque Country, Spain, \end{flushleft}
\begin{flushleft}
and
\end{flushleft}
\begin{flushleft}
Facultad de Ingenier\'ia, Universidad de Deusto, Avenida de las Universidades 24, 48007 Bilbao, Basque Country, Spain, 
\end{flushleft}
\begin{flushleft}
and
\end{flushleft}
\begin{flushleft}
Departamento de Matem\'aticas, Universidad Aut\'onoma de Madrid, 28049 Madrid, Spain. 
\end{flushleft}
\begin{flushleft}
enrique.zuazua@deusto.es
\end{flushleft}
}
\newtheorem{theorem}{Theorem}[section]
\newtheorem{lemma}{Lemma}[section]
\newtheorem{remark}{Remark}[section]
\newtheorem{definition}{Definition}[section]
\newcommand{\bbr}{\mathbb R}
\newcommand{\bu}{\mbox{\boldmath $u$}}
\newcommand{\bv}{\mbox{\boldmath $v$}}
\newcommand{\bx}{\mbox{\boldmath $x$}}
\begin{document}

\subjclass[2010]{} \keywords{Asymptotic stability, Collective behavior, Guidance by repulsion, Herding, Optimal control}

\thanks{\textbf{Acknowledgment.} The second author wishes to thank Ram\'on Escobedo and Aitziber Iba\~nez for fruitful discussions. This project has received funding from the European Research Council (ERC) under the European Union’s Horizon 2020 research and innovation programme (grant agreement No. 694126-DyCon).
The work of the first author has been supported by CNCS-UEFISCDI Grant No. PN-III-P4-ID-PCE-2016-0035.
The work of the second author has been funded by the Alexander von Humboldt-Professorship program, the European Union’s Horizon 2020 research andinnovation programme under the Marie Sklodowska-Curie grant agreement No.765579-ConFlex, grant MTM2017-92996-C2-1-R COSNET of MINECO (Spain), ELKARTEK project KK-2018/00083 ROAD2DC of the Basque Government, ICON of the French ANR and Nonlocal PDEs: Analysis, Control and Beyond, AFOSR Grant FA9550-18-1-0242.}

\begin{abstract}
We model and analyze a herding problem, where the drivers try to steer the evaders' trajectories while the evaders always move away from the drivers. This problem is motivated by the \emph{guidance-by-repulsion} model \cite{escobedo2016optimal}, where the authors answer how to control the evaders' positions and what is the optimal maneuver of the drivers.
First, we obtain the well-posedness and the long-time behavior of the one-driver and one-evader model, assuming of the same friction coefficients. In this case, the exact controllability is proved in a long enough time horizon.
We extend the model to the multi-driver and multi-evader case, and develop numerical simulations to systematically explore the nature of controlled dynamics in various scenarios. 
The optimal strategies turn out to share a common pattern to the one-driver and one-evader case: the drivers rapidly occupy the position behind the target, and want to pursuit evaders in a straight line for most of the time. Inspired by this, we build a feedback strategy which stabilizes the direction of evaders.

\end{abstract}
\date{\today}
\maketitle 

\section{Introduction}
\setcounter{equation}{0}
Interactions between different groups of individuals are often observed in collective behavior models, such as the schooling of fish and whales or the herding of sheep and dogs. Concerning the herding (or hunting) problem \cite{coppinger2015dogs,reynolds1987flocks}, this kind of systems have been studied, where there are herding \emph{evaders} (sheep) interacting with their \emph{drivers} (shepherd dogs). There is a vast literature focusing on various related topics, 
for example, understanding and simulating real data of shepherd dogs \cite{strombom2014solving}, 
analyzing the optimal number of predators in the chase-and-escape model \cite{vicsek2010statistical} 
and studying how cooperation arises and enhances efficiency in hunting problems \cite{escobedo2014group}.

In the study of these phenomena, control theory can be a natural strategy to tackle the herding problem \cite{burger2014partial}.
In the recent works \cite{muro2011wolf,wang2015motion}, effective chase strategies are designed for hunting problems. In \cite{pierson2017controlling}, feedback formation is suggested to collect sheep in a small area. In \cite{bongini2017mean}, the well-posedness for optimal control problems is established on transport equations of the herd, coupled with ordinary differential equations (ODEs) of the dogs.

In order to herd the evaders, the control system relies on a secondhand control through the drivers. Since we cannot directly control the evaders, the first-order linearized model does not give us useful information. It is difficult, or even impossible, to control the evaders' positions in a linearized model, resulting only on partial controllability results that are then hard to extrapolate to the nonlinear model. For this reason, a genuinely nonlinear analysis of the models is needed. 

On the other hand, the \emph{guidance-by-repulsion} model \cite{escobedo2016optimal} suggests an analytic approach to model the interactions between one driver and one evader, where the driver steers the evader's final position by combining two strategies; the pursuit and circumvention maneuver. More precisely, the driver naturally follows the evader at a safe distance along linear trajectories (the pursuit), but it can perform rotational motion to change the escaping direction of the evader (circumvention maneuver). 

Our interest lies in modeling and analyzing these dynamics with many drivers and many evaders. A systematic computational analysis of this issue is also developed. In order to construct models mimicking the interaction between shepherd dogs and sheep, we assume that the interactions should satisfy the following regulations:
\begin{itemize}
\item The drivers follow the evaders but cannot be arbitrarily close to them because of animal conflict, chemical repulsions, etc.
\item The drivers also interact between each other in order to avoid collisions.
\item The evaders escape from the drivers but also seek to flock together.
\item When a driver is close to the evaders, it can display a circumvention maneuver around the evaders that forces
them to change their direction.
\item Thus, by adjusting the onset and
offset of the circumvention maneuver, the evaders can be driven toward a desired target position.
\item The drivers can stop the pursuit motion and wait until the evaders escape and flock together again.
\item For simplicity, we assume that each driver knows and follows the barycenter of the evaders. It is a non-trivial problem to determine what the drivers see and what motivates them to move, act and interact.
\end{itemize}

With these principles as basis, and inspired by the existing abundant literature, we formulate a new model as a control system. After discussions on the analytical properties of this model, we perform a number of computational experiments in order to explore the efficiency of the control strategies developed, and inspired by these results, we build a feedback control to herd the evaders.

A priori, we do not set up any collaborative strategy between drivers, and each of them establishes a relation with respect to the crowd of evaders as if it were the sole driver.
However, the optimal control strategy turns out to show an emerging complex control dynamics, assigning to each driver a specific role that is not easy to anticipate from the formulation of the control problem. 

The rest of this paper is organized as follows. In the consecutive two subsections, we introduce the formulation and main results for the controlled dynamics (Section \ref{sec:1.1}) and discuss related works on the herding problem (Section \ref{sec:1.2}). 
In Section \ref{sec:simple}, we state the well-posedness and controllability for the simplified model, which consists of one driver and one evader.
The proofs are presented in Section \ref{sec:control} and they make use of the Lyapunov function method. Section \ref{sec:multi} is devoted to simulations for the optimal control strategies in the multi-driver and multi-evader model with a comparison to the simplified one. In Section \ref{sec:feedback}, the feedback control is built and explained with its simulations. Finally, in Section \ref{sec:con}, some final remarks and open problems are presented.

\subsection{Problem formulation and control strategies}\label{sec:1.1}

In order to formulate the herding problem, we consider an interacting particle system with $M$ drivers and $N$ evaders.
Let $\bu_{ei}$ be the position of the $i$-th evader in $\mathbb R^2$ for $i = 1,\ldots,N$, and $\bu_{dj}$ be that of the $j$-th driver for $j = 1,\ldots,M$.  
Then, the guidance-by-repulsion model can be described by the interactions among those agents as follows.
\begin{itemize}

\item Each agent follows Newtonian dynamics with friction, and the interactions are based on their relative positions. 

\item The $i$-th evader gets a repulsive force from the $j$-th driver,
\[ -\frac{1}{M}\sum_{j=1}^Mf_e(|\bu_{dj} - \bu_{ei}|)(\bu_{dj} - \bu_{ei}), \]
where $f_e$ is a nonnegative function, $f_e(r) \to \infty$ when $r \to 0$ and $f_e(r) \to 0$ as $r \to \infty$. (for example, see \eqref{fd_fe} or Figure \ref{fig:fdfefr}).

\item The interaction among evaders enhances the flocking of them. The total interaction force on the $i$-th evader is given by
\[ \frac{1}{N}\sum_{k \neq i} \psi_e(|\bu_{ek} - \bu_{ei}|)(\bu_{ek} - \bu_{ei}), \]
where $\psi_e(r)$ is an attractive-repulsive function, which is positive if $r$ is large and negative if $r$ is small (see also \eqref{fd_fe}). Hence, the evaders flock but they keep a positive distance between them.

\item The $j$-th driver $\bu_{dj}$ pursuit the barycenter of evaders, $\bu_{ec}$, by the force
\[ -\kappa^p_j(t)f_d(|\bu_{dj} - \bu_{ec}|)(\bu_{dj} - \bu_{ec}), \quad \bu_{ec} := \frac{1}{N}\sum_{k=1}^N \bu_{ek}, \]
where $f_d : \mathbb R_+ \to \mathbb R$ is a nonnegative bounded function and $\kappa^p_j$ is a bounded control function. The pursuit occurs when the values of $f_d$ and $\kappa^p_j$ are positive.

\item The drivers may perform the circumvention maneuver in the perpendicular direction of $(\bu_{dj} - \bu_{ec})$: for the $j$-th driver, this can be represented as
\[ \kappa^c_j(t)(\bu_{dj} - \bu_{ec})^\perp, \quad (u_1,u_2)^\perp := (-u_2,u_1) \text{ in } \mathbb R^2, \]
where $\kappa^c_j$ is a bounded control function.

\item In order to avoid collisions among drivers, the $j$-th driver get forces from other drivers,
\[ \frac{1}{M}\sum_{l \neq j}\psi_d(|\bu_{dl} - \bu_{dj}|)(\bu_{dl} - \bu_{dj}), \]
where $\psi_d(r)$ takes positive values if $r$ is small in order to repel other drivers (see also \eqref{fd_fe}).
\end{itemize}

Putting all these together, the whole driver-evader interactions can be represented as a nonlinear system of ODEs as follows:
\begin{equation}\label{GBR_general}
\begin{aligned}
\begin{cases}
\displaystyle \ddot{\bu}_{dj} = -\kappa_j^p(t)f_d(|\bu_{dj} - \bu_{ec}|)(\bu_{dj} - \bu_{ec}) -\frac{1}{M}\sum_{l \neq j} \psi_d(|\bu_{dl} - \bu_{dj}|)(\bu_{dl} - \bu_{dj}) - \nu_{dj} \dot{\bu}_{dj}\\
\hspace{1cm}+ \kappa_j^c(t) (\bu_{dj} - \bu_{ec})^\perp, \\
\displaystyle \ddot{\bu}_{ei} =  - \frac{1}{M} \sum_{j=1}^M f_e(|\bu_{dj} - \bu_{ei}|)(\bu_{dj} - \bu_{ei}) -\frac{1}{N}\sum_{k\neq i} \psi_e(|\bu_{ek} - \bu_{ei}|)(\bu_{ek} - \bu_{ei}) - \nu_{ei} \dot{\bu}_{ei}, \\
\displaystyle \bu_{dj}(0) = \bu_{dj}^0,~ \bu_{ei}(0) = \bu_{ei}^0,~ \dot{\bu}_{dj}(0) = \bv_{dj}^0, ~ \dot{\bu}_{ei}(0) = \bv_{ei}^0,\quad i=1,\ldots,N,\quad j=1,\ldots,M,
\end{cases}
\end{aligned}
\end{equation}
where $\nu_{dj}$ and $\nu_{ei}$ are constant friction coefficients and the functions $f_d$, $f_e$, $\psi_d$, and $\psi_e$ are locally Lipschitz functions.

Our objective of the control in \eqref{GBR_general} would be to herd the evaders $\bu_{ei}$ using the drivers $\bu_{dj}$; we want to gather, drive and trap the evaders into a specific area.
\begin{definition}\label{D1.1}
For a given spatial set $D$, the \emph{guidance-by-repulsion} problem \eqref{GBR_general} is controllable to $D$ if there exist
\[ t_f>0,~ \kappa^p_j,~ \kappa^c_j \in L^\infty([0,t_f],\mathbb R),~j=1,\ldots,M,\]
such that the solution of \eqref{GBR_general} satisfies 
 \[\bu_{ei}(t_f) \in D,~ \forall i = 1,\ldots,N. \]
\end{definition}

Bilinear controls $\kappa_j^p(t)$ and $\kappa_j^c(t)$ enter in the system in an open-loop manner, where they represent the strength of the pursuit motion and the rotational circumvention motion, respectively. 
For example, when $\kappa_j^p(t) = 1$ and $\kappa_j^c(t) = 0$, the drivers get forced by $f_d$, so that they track the evaders in the direction of $(\bu_{dj}-\bu_{ec})$, referred to as the \emph{pursuit mode}. On the other hand, if $\kappa_j^p(t) = 1$ and $\kappa_j^c(t) = \kappa_j^c \neq 0$, we refer to it as the \emph{circumvention mode}, since the driver moves into a perpendicular direction to steer the evader's direction. Furthermore, the drivers stop tracking the evaders when $\kappa_j^p(t) = 0$ and $\kappa_j^c(t) = 0$, which is the \emph{release mode}. Then, the evaders will escape from the drivers and flock together again.

To gain some understanding of the controlled dynamics above, here we present simple simulations regarding the pursuit and circumvention modes.
Figure \ref{fig:multi} represents simulations of 1 driver and 5 evaders with the parameters $\nu = \nu_{d1} = \nu_{ei} = 2$ for $i=1,\ldots,5$, and the following nonlinearities $f_d$, $f_e$, $\psi_d$ and $\psi_e$:
\begin{equation}\label{fd_fe}
\begin{aligned}
f_d(r) = 1,\quad f_e(r) = \frac{1}{r^2},\quad \psi_d(r) = \frac{1}{2r^4}
\quad\text{and}\quad \psi_e(r) = 10\left(\frac{(0.1)^2}{r^2}-\frac{(0.1)^4}{r^4}\right).
\end{aligned}
\end{equation}

The first simulation (left) shows the trajectories in the circumvention mode ($\kappa^p_1(t) = 1$, $\kappa^c_1(t) = 0$) for $t \in [0,15]$, where we can see rotational motion of the evaders after the driver and evaders are close enough.
The second one (right) follows the circumvention mode for $t \in [0,10]$, however, changes to the pursuit mode ($\kappa^p_1(t) = 1$, $\kappa^c_1(t) = 0$) after $t=10$. By adjusting to the pursuit mode after some time, we may drive the evaders to an appropriate direction we want. 

\begin{figure}[ht]
  \centering
  {
    \includegraphics[width=0.45\textwidth]{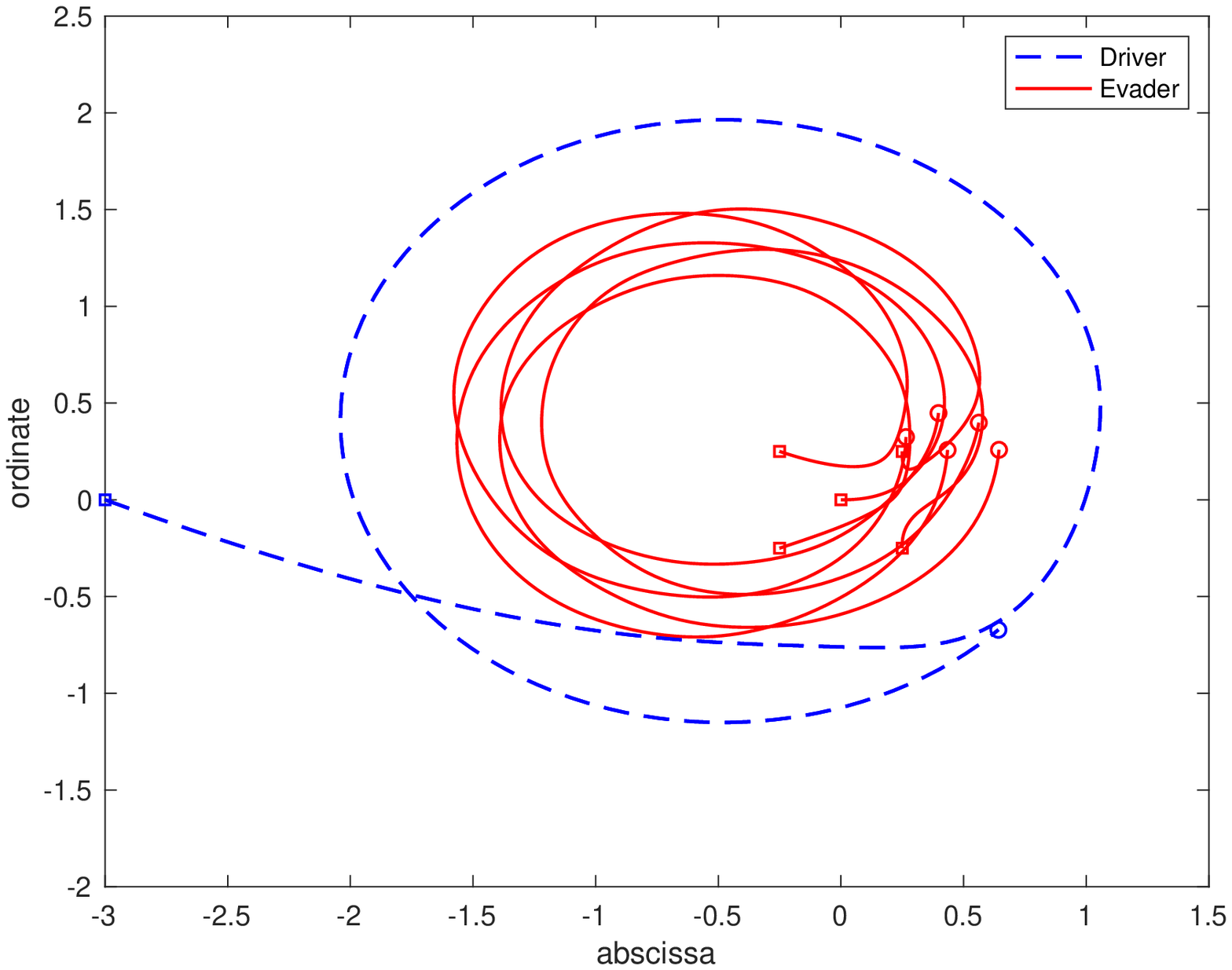}
    \includegraphics[width=0.45\textwidth]{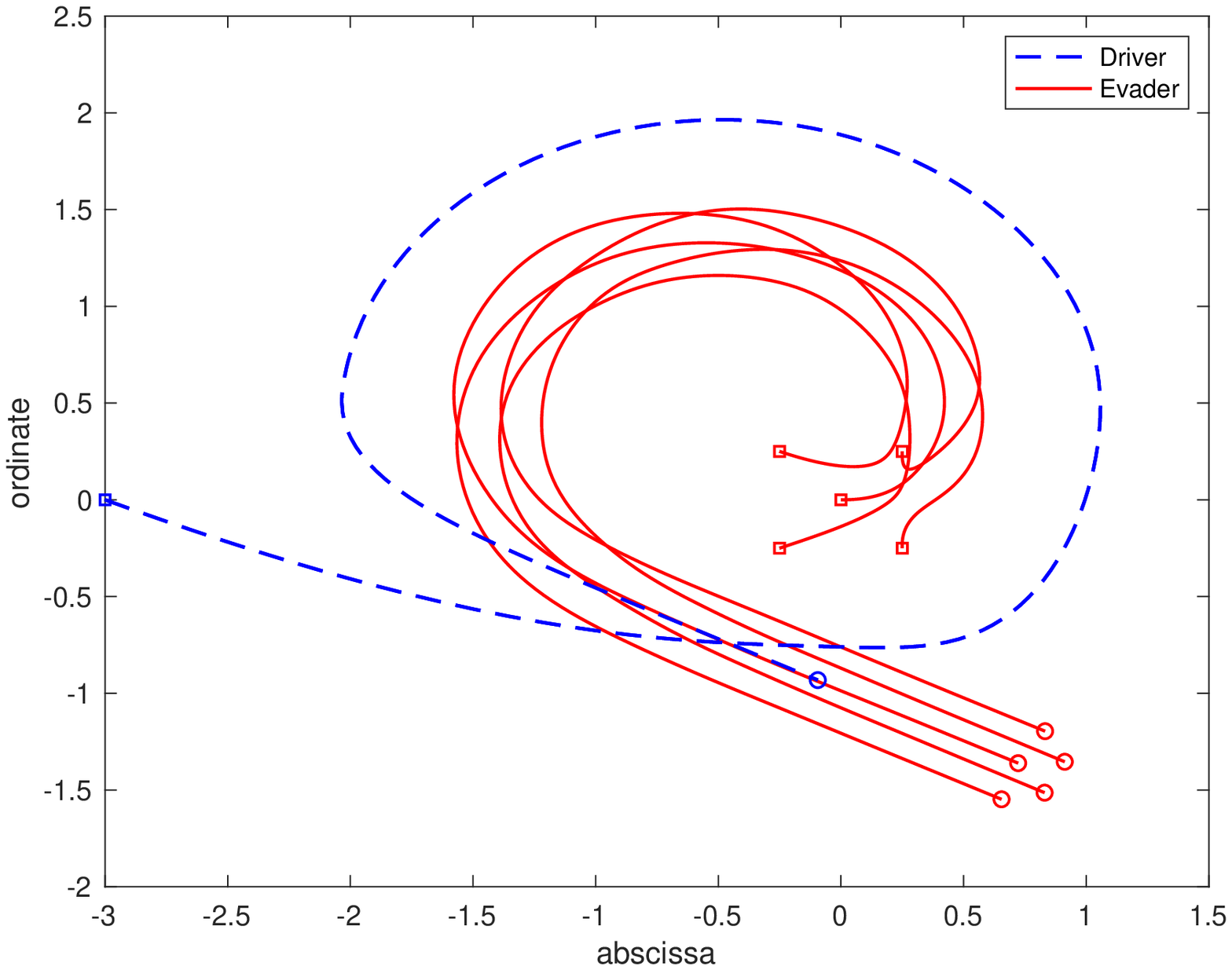}
  }
  \caption{The trajectories of \eqref{GBR_general} with 1 driver and 5 evaders. The square and circle marks represent the initial and final positions, respectively. A nonzero circumvention control $\kappa^c_1(t) = 1$ for $t \in [0, 15]$ leads to the rotational motion eventually (left). By turning off the circumvention control at $t=10$, linear pursuit motion arises (right). In this way, we may steer the evaders to a desired area.}
  \label{fig:multi}
\end{figure}

In this paper, we focus on the dynamical features and control strategies of the guidance-by-repulsion model \eqref{GBR_general}. On the one hand, in order to analyze the interaction between the drivers and evaders, we study a simplified model with one driver and one evader. 
For this model, we discuss its well-posedness and the asymptotic motion of the driver and evader in each mode. As in Figure \ref{fig:multi}, linear and rotational trajectories arise respectively in the pursuit and the circumvention mode. These time-asymptotic convergences lead us to prove the exact controllability of the evader's position with a long enough time horizon.

On the other hand, from a control perspective, we simulate optimal open-loop control strategies on the multi-driver and multi-evader model. We use the gradient method to find optimal controls and controlled trajectories. 
Among the simulations, a simple pattern appears: the drivers start with the circumvention motion to steer the directions of the evaders, and then switch smoothly to the pursuit motion toward the target point.

This pattern motivated us to build a feedback strategy, where each driver can decide proper controls $\kappa^p_j(t)$ and $\kappa^c_j(t)$ from the information of the target point, the barycenter of evaders and the diameter of evaders. When there are more than one driver, this feedback strategy naturally make the drivers to build a formation behind the herd of evaders and push the evaders as a team.

\subsection{Related works on the herding problem}\label{sec:1.2}
As we introduced in the beginning, there have been a lot of researches to understand the herding problem and simulate its dynamics. In the viewpoint of control theory, here we list several related works:

\begin{itemize}
\item In \cite{king2012selfish,strombom2014solving}, a time-discrete model has been studied with one driver and many evaders in order to explain the real-world data of shepherd dogs. The dog is designed to track an abnormal sheep escaping from the herd, and force it to move toward the center of sheep. This idea was extended to a multi-driver case in \cite{lee2017autonomous}, where the drivers try to control the nearest evader if the evaders keep close each other.

\item In the context of automation design, in \cite{lien2004shepherding}, they classify the configuration of the evaders' positions, and provide a specific strategy of one driver for each situation. They suggest that the driver need to know the sight of evaders and should avoid their personal space for herding.

\item From the viewpoint of the large population limit of evaders, the optimal control of the herding problem is formulated in \cite{burger2016controlling,pinnau2018interacting}. In these papers, the density function of the evaders is described in the transport equation when the interactions are bounded and smooth. Under this assumption, the optimal control problem is well-posed, and the evaders eventually accumulates to one point.

\item Several feedback strategies are suggested in \cite{pierson2017controlling}, where the drivers first surround the evaders in a small area and escort them to the target point. This extends the results of \cite{muro2011wolf,wang2015motion}, where they control the drivers to make a polygon formation and surround the evaders.

\item Feedback controls for collecting separated evaders have been studied in \cite{licitra2017single}. Their strategy is to drive evaders one by one to a given point, and they proved that the convergence of evaders' positions is exponential.
\end{itemize}

Compared to these works, our novelty comes from the equations of motion for the drivers. In \eqref{GBR_general}, the dynamics of drivers are governed by simple interaction rules and restricted so that we can not control them freely. Instead of manipulating each driver by hand, we determine the pursuit and circumvention modes of the drivers by two control functions $\kappa^p_j(t)$ and $\kappa^c_j(t)$. This also enables us to easily understand simulations of the optimal control, and we can build feedback controls from the idea of optimal control strategies. 

\vspace{1em}
\section{A simplified Model: One driver and one evader}\label{sec:simple}

One of the standard methods understanding a collective model is to analyze the behavior of a small number of particles \cite{bellomo2014sociology,bellomo2013complexity,vicsek2010statistical}, where the emergent dynamics often arise with simple conditions related to the micro-scale interactions. To enhance the intuition on the herding strategies, we first focus on the simplified model with one driver and one evader.

Let $\bu_d$ and $\bu_e \in \bbr^2$ be the positions of the driver and evader, respectively, and $\kappa^p(t) := \kappa_1^p(t)$ and $\kappa^c(t) := \kappa_1^c(t)$ be the pursuit and circumvention control functions of the driver. From \eqref{GBR_general}, the dynamics can be rewritten as
\begin{equation}\label{GBR_simple}
\begin{aligned}
\begin{cases}
\displaystyle\dot {\bu}_d = \bv_d,\quad \dot {\bu}_e = \bv_e,\quad \bu := \bu_d-\bu_e,\quad \dot{\bu}=\bv,\\
\displaystyle \dot {\bv}_d = -\kappa^p(t)f_d(|\bu|)\bu + \kappa^c(t) \bu^\perp - \nu \bv_d,\\
\displaystyle \dot {\bv}_e = -f_e(|\bu|)\bu - \nu \bv_e,\\
\displaystyle \bu_d(0) = \bu_d^0,~ \bu_e(0) = \bu_e^0,~ \bv_d(0) = \bv_d^0,~ \bv_e(0) = \bv_e^0,
\end{cases}
\end{aligned}
\end{equation}
where we assumed the same dissipation coefficients $\nu = \nu_d = \nu_e$ for the driver and evader.

The assumption of $\nu_d = \nu_e$ will play a critical role though it is a technical requirement not desirable from practical viewpoints. The difference of the friction generate oscillatory dynamics on the relative distance between the driver and evader. In contrast, under the same dissipation assumption, note that the equation of the relative position $\bu$ in \eqref{GBR_simple} can be described in a separated and closed equation:
\begin{equation}\label{GBR_rel}
\begin{aligned}
\ddot{\bu} + (\kappa^p(t)f_d(|\bu|) - f_e(|\bu|))\bu + \nu \dot{\bu} = \kappa^c(t) \bu^\perp,
\end{aligned}
\end{equation}
which resembles the equation of the harmonic oscillators.

In this section, we discuss well-posedness and asymptotic behavior of the model \eqref{GBR_simple} and \eqref{GBR_rel}. By combining asymptotic solutions, we can build an off-bang-off control to show that there exists a controlled trajectory which leads $\bu_e(t_f) = \bu_f$.

\subsection{Global well-posedness}\label{sec:simple.global}

 Since the interaction between the driver and the evader is attractive and repulsive depending on the distance (for example, $f_d$ and $f_e$ in \eqref{fd_fe}), we assume that the relative force $f(r) := f_d(r) - f_e(r)$ satisfies the following condition with a constant $r_p > 0$:
\begin{equation}\label{eq_f}
f(r) = \begin{cases} < 0 \quad\text{for }~ 0 < r < r_p, \\
\geq 0 \quad \text{for }~ r \geq r_p \end{cases}~ \text{with}\quad f'(r_p) > 0.
\end{equation}
This assumption on $f$ correspond to the Van der Waals type forces which exclude collisions between individuals in \cite{vicsek2010statistical}. Then, the equation \eqref{GBR_rel} with $\kappa^p(t)=1$ is a damped oscillator model with respect to the potential energy
\[ P(\bu) := \int_{r_p}^{|\bu|} rf(r)dr,\]
affected by an additional perpendicular force $\kappa^c(t)\bu^\perp$.

Moreover, for the well-posedness of the system, we suggest the following conditions of singular repulsive potential with a constant $\gamma_m >0$:
\begin{equation}\label{f_g}
\begin{aligned}
&f_d \in L^\infty(\mathbb R_+,\mathbb R),\quad \lim_{r \to \infty} f_d(r) = \gamma_m >0,\\
&0\leq f_e(r) <\infty ~\text{for}~ 0 < r < \infty,\quad \int_0^{r_p} rf_e(r)dr = \infty,\quad \text{and}\quad \lim_{r \to \infty} f_e(r) = 0.
\end{aligned}
\end{equation}
Then, $P$ becomes an unbounded potential, $P((0,0))=\infty$, and it grows quadratically as $\bu$ increases. 
For example, the interaction forces $f_d(r)$ and $f_e(r)$ suggested in \eqref{fd_fe} satisfy \eqref{eq_f} and \eqref{f_g}.

\begin{figure}[ht]
  \centering
    \includegraphics[width=0.9\textwidth]{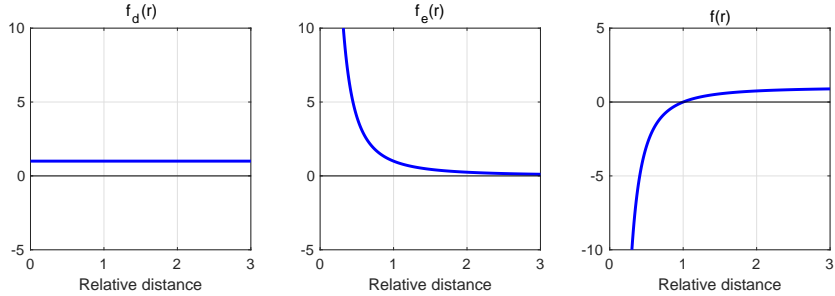}
  \caption{The graphs of interaction forces, $f_d(r)$, $f_e(r)$ and $f(r)=f_d(r)-f_e(r)$ in \eqref{fd_fe}, drawn only for $0 \leq r \leq 3$ and $-10\leq f(r) \leq 10$.}
  \label{fig:fdfefr}
\end{figure}

\begin{theorem}\label{T2.1}
Suppose that $f_d$ and $f_e$ satisfy the conditions \eqref{eq_f} and \eqref{f_g}.
Then, if the control functions $\kappa^p(t)$ and $\kappa^c(t)$ are uniformly bounded, then the global existence of the solution to \eqref{GBR_simple} is guaranteed. In addition, the trajectory of $\bu(t)$ remains away from $(0,0)$ in a finite time.

Moreover, if we additionally assume that the controls are constant and satisfy
\[ \kappa^p(t) \equiv \kappa^p >0,\quad \kappa^c(t) \equiv \kappa^c \quad\text{and}\quad |\kappa^c| < \nu\sqrt{\kappa^p\gamma_m}, \]
for $\gamma_m$ in \eqref{f_g}, then $|\bu(t)|$ is uniformly bounded from above and below along the whole time.
\end{theorem}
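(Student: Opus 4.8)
The plan is to treat \eqref{GBR_rel} as a damped oscillator and build an energy (Lyapunov) functional whose decay controls $|\bu(t)|$ from above, while the singular potential controls it from below. First I would establish local well-posedness: away from the collision set $\{\bu = 0\}$ the right-hand side of \eqref{GBR_simple} is locally Lipschitz (since $f_d$, $f_e$, $\psi_d$, $\psi_e$ are locally Lipschitz and $\kappa^p,\kappa^c \in L^\infty$), so Picard–Lindelöf gives a unique maximal solution. To upgrade to global existence and to prove the trajectory of $\bu(t)$ stays away from $(0,0)$, I would introduce the natural energy for the relative equation with $\kappa^p(t) = 1$,
\[
E(t) := \frac{1}{2}|\dot{\bu}(t)|^2 + P(\bu(t)), \qquad P(\bu) = \int_{r_p}^{|\bu|} r f(r)\,dr,
\]
and compute $\dot{E}$ along \eqref{GBR_rel}. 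The friction term contributes $-\nu|\dot{\bu}|^2 \le 0$, the perpendicular forcing $\kappa^c(t)\bu^\perp$ is orthogonal to $\dot{\bu}$ only in the $\kappa^c=$const circumvention-free subcase — in general it contributes $\kappa^c(t)(\bu^\perp\cdot\dot{\bu})$, which I would bound by $C(|\bu|^2 + |\dot{\bu}|^2)$, and the discrepancy between $f_d$ and $\kappa^p(t)f_d$ contributes a term bounded in the same way. A Grönwall argument then shows $E(t)$ cannot blow up in finite time; since $P(\bu) \to \infty$ as $|\bu| \to 0$ by the assumption $\int_0^{r_p} r f_e(r)\,dr = \infty$ in \eqref{f_g}, and since $P$ grows quadratically at infinity (from $f_d(r)\to\gamma_m$, $f_e(r)\to 0$), boundedness of $E$ on finite intervals forces $|\bu(t)|$ to stay in a compact subset of $(0,\infty)$ on each finite interval. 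That $\bu(t)$ is bounded away from $0$ \emph{in finite time} for arbitrary bounded controls then follows because $P(\bu(t)) \le E(t)$ is finite for each $t$, and $E$ is finite on any finite interval.

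For the second, quantitative part — uniform upper and lower bounds on $|\bu(t)|$ for all $t \ge 0$ under constant controls with $|\kappa^c| < \nu\sqrt{\kappa^p \gamma_m}$ — the energy $E(t)$ is no longer monotone because of the rotational forcing, so I would use a modified Lyapunov functional of the form
\[
\mathcal{L}(t) := \frac{1}{2}|\dot{\bu}|^2 + \kappa^p P(\bu) + \varepsilon\,(\bu \cdot \dot{\bu}) + \frac{\varepsilon\nu}{2}|\bu|^2,
\]
the last two cross terms being the standard device that turns a damped oscillator estimate into exponential decay toward the equilibrium shell. The key computation is $\dot{\mathcal{L}}$: one gets $-\nu|\dot{\bu}|^2 + \varepsilon|\dot{\bu}|^2 - \varepsilon\kappa^p(f_d\text{-part})|\bu|^2 + \kappa^c\text{-cross terms}$, and the algebraic point is that the smallness condition $|\kappa^c| < \nu\sqrt{\kappa^p\gamma_m}$ is exactly what makes the associated quadratic form (in $|\bu|$ and $|\dot{\bu}|$) negative definite for a suitably small $\varepsilon > 0$, after using $f_d(r) \to \gamma_m$ to control the restoring coefficient from below for large $r$ and handling small $r$ separately via the singular potential. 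From $\dot{\mathcal{L}} \le -c\,\mathcal{L} + C$ one concludes $\mathcal{L}$ is uniformly bounded above, hence $P(\bu(t))$ is uniformly bounded, which gives both the uniform upper bound (quadratic growth of $P$ at infinity) and, revisiting, the uniform lower bound on $|\bu(t)|$ (the blow-up of $P$ at the origin).

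The main obstacle I anticipate is handling the circumvention forcing $\kappa^c \bu^\perp$ cleanly: because it is a genuine rotation it injects energy, and one must show the friction $\nu\dot{\bu}$ dominates it in the right norm — this is where the precise threshold $|\kappa^c| < \nu\sqrt{\kappa^p\gamma_m}$ enters and where the choice of cross-term coefficient $\varepsilon$ and the splitting into near-origin versus far-field regions must be done carefully, since near the origin $f_d$ need not be close to $\gamma_m$ and one must instead lean on the blow-up of the evader potential $\int_0^{r_p} r f_e(r)\,dr = \infty$. A secondary technical point is that $P$ is only defined via an integral against $rf(r)$ where $f$ changes sign at $r_p$, so $P$ has its minimum at $|\bu| = r_p$ and one should phrase all the bounds relative to this shell rather than a single point; comparing $\mathcal{L}$ with the genuine energy and with $|\bu|^2 + |\dot{\bu}|^2$ requires the coercivity estimates $c_1(|\bu - r_p\text{-shell}|^2) \le P(\bu)$ near $r_p$ coming from $f'(r_p) > 0$ in \eqref{eq_f}.
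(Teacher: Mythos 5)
Your first part (global existence and no collision in finite time) is essentially the paper's own argument: local well-posedness away from $\{\bu=0\}$, the standard energy $E=\tfrac12|\dot\bu|^2+P(\bu)$, Young's inequality on the terms $(1-\kappa^p)f_d\,\bu\cdot\dot\bu$ and $\kappa^c\,\bu^\perp\cdot\dot\bu$, and Gr\"onwall, using that $P$ blows up at the origin and grows quadratically at infinity. That half is correct and matches the paper.

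For the second part there is a genuine gap. With your functional
\[
\mathcal L=\tfrac12|\dot\bu|^2+\kappa^p P(\bu)+\varepsilon\,\bu\cdot\dot\bu+\tfrac{\varepsilon\nu}{2}|\bu|^2,
\qquad
\dot{\mathcal L}=-(\nu-\varepsilon)|\dot\bu|^2-\varepsilon\kappa^p f(|\bu|)\,|\bu|^2+\kappa^c\,\bu^\perp\cdot\dot\bu
\]
(with $\kappa^p=1$ after rescaling), the quadratic form is indeed negative definite for large $|\bu|$ at the stated threshold (take $\varepsilon=\nu/2$, using $f\to\gamma_m$). But the inequality $\dot{\mathcal L}\le -c\,\mathcal L+C$ that you invoke to conclude cannot hold near the collision set: there $\mathcal L\ge P(\bu)\to\infty$, so $-c\,\mathcal L+C\to-\infty$, while $\dot{\mathcal L}$ contains the term $-\varepsilon f(|\bu|)|\bu|^2=\varepsilon f_e(|\bu|)|\bu|^2-\varepsilon f_d(|\bu|)|\bu|^2$, which is bounded below (and can even be positive and large, since \eqref{f_g} allows $r^2f_e(r)$ to be unbounded as $r\to0$). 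Even in the paper's example $f_e(r)=r^{-2}$ one has $\dot{\mathcal L}\approx\varepsilon>0$ when $|\dot\bu|$ is small near the origin, while $P(\bu)\sim\log(1/|\bu|)$, so neither $\dot{\mathcal L}\le-c\mathcal L+C$ nor even ``$\dot{\mathcal L}\le0$ where $\mathcal L$ is large'' holds. Your suggestion to ``lean on the blow-up of the evader potential'' works against you here: the blow-up is exactly what breaks the differential inequality. Consequently the uniform upper bound and, especially, the uniform lower bound on $|\bu(t)|$ are not established by this route. The paper avoids the problem by choosing the perpendicular cross term instead of the radial one: with $L_\kappa=E-\frac{\kappa^c}{\nu}\,\bu^\perp\cdot\dot\bu$ one gets the exact identity $\dot L_\kappa=-\nu\bigl|\dot\bu-\frac{\kappa^c}{\nu}\bu^\perp\bigr|^2\le0$ globally (no smallness of any auxiliary parameter, no region splitting), and the threshold $|\kappa^c|<\nu\sqrt{\kappa^p\gamma_m}$ enters only in proving coercivity of $L_\kappa$ at infinity; monotonicity of $L_\kappa$ then bounds $|\bu|,|\dot\bu|$ above, hence the cross term is bounded, hence $P(\bu)$ is bounded, which yields the lower bound on $|\bu|$. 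To repair your proof you would either have to switch to such a perpendicular correction or supply a separate argument controlling excursions of the trajectory toward the origin, which your current functional does not provide.
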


\begin{remark}\label{R2.1}
For a nonzero constant pursuit control $\kappa^p(t) \equiv \kappa^p$, we may assume $\kappa^p = 1$ without loss of generality. We may use $\kappa^pf_d(r)$ instead of $f_d(r)$ since this is still a bounded function depending only on $r$.
\end{remark}

The proof of the well-definedness is in Section \ref{sec:control} and uses energy methods as in other collective dynamics model \cite{carrillo2017sharp,cucker2010avoiding}. In particular, if $P$ does not blow-up in a finite time, then $\bu(t)$ cannot hit $(0,0)$ and the solution $(\bu_d,\bu_e)$ is well-defined.

\subsection{Asymptotic motion under constant controls}\label{sec:steady}

 After the well-posedness in Theorem \ref{T2.1}, the next question would be the asymptotic motion of $\bu_d(t)$ and $\bu_e(t)$. From the simulation in Figure \ref{fig:multi}, we expect that the dynamics of \eqref{GBR_simple} eventually tends to linear motion in the pursuit and release mode and rotational motion in the circumvention mode. In the following subsections, we suggest a linear or rotational solution on each mode, which represents possible asymptotic behavior. This analysis is based on the relative position $\bu(t)$ and its equation of motion \eqref{GBR_rel}.

\subsubsection{Case 1 : The pursuit mode}\label{sec:steady_0}

Suppose that the system evolves in the pursuit mode, i.e., $\kappa^p(t) \equiv 1$ and $\kappa^c(t) \equiv 0$. We want to find a solution representing linear motion, which may be a possible asymptotic solution in the pursuit mode.

 From the dissipative property of \eqref{GBR_rel}, we may expect the velocity $\dot{\bu}$ and acceleration $\ddot{\bu}$ vanish eventually. This equilibrium solution $\bar\bu(t)$ of the relative position follows
\begin{equation*}
\begin{aligned}
f(|\bar\bu(t)|) = 0,
\end{aligned}
\end{equation*}
then we have
\[ \bar \bu (t) \equiv \bu^* \in \mathbb R^2\quad\text{where}\quad \bu^* = r_p(\cos\phi_0, \sin\phi_0),  \]
where $r_p$ is defined in \eqref{eq_f} and $\phi_0$ is a constant in $[0,2\pi)$.

Next, we would like to determine asymptotic motion of the driver and evader when the relative position is given by $\bar\bu(t) \equiv \bu^*$. From the equation of motion \eqref{GBR_simple}, the corresponding positions $\bu_d$ and $\bu_e$ should satisfy
\begin{equation*}
\begin{aligned}
\ddot {{\bu}}_d + \nu \dot{\bu}_d = -f_d(|\bu^*|)\bu^*
\quad\text{and}\quad
\ddot {{\bu}}_e + \nu \dot{\bu}_e = -f_e(|\bu^*|)\bu^*,
\end{aligned}
\end{equation*}
where the right-hand sides are the same constant vector, $-f_d(|\bu^*|)\bu^* = -f_e(|\bu^*|)\bu^*$. Note that these are second order damping motions with constant external forces.
Then, we may conclude that $\bv_d(t)$ and $\bv_e(t)$ converge  exponentially to the constants $-f_d(|\bu^*|)\bu^*/\nu$, and then the positions tend to linear motion.

Therefore, $\bu_d(t)$ and $\bu_e(t)$ would converge to the following solutions,
\begin{equation}\label{steady_0}
\begin{aligned}
&\bar {\bu}_d(t) = -\frac{f_d(\bu^*)\bu^*}{\nu}t + \bu_d^* \quad\text{and}\quad
\bar {\bu}_e(t) = -\frac{f_d(\bu^*)\bu^*}{\nu}t + \bu_e^*,
\end{aligned}
\end{equation}
with constants $\bu_e^* \in \mathbb R^2$ and $\bu_d^* = \bu_e^* + \bu^*$. Hence, there is a family of linear motion \eqref{steady_0} with respect to the undetermined parameters $\phi_0 \in [0,2\pi)$ and $\bu_e^* \in \mathbb R^2$.

Note again that the trajectories \eqref{steady_0} are the asymptotic solutions when the relative position $\bu(t)$ is stationary. 
We will call it as the \emph{pursuit dynamics}, where the driver follows the evader with the same constant velocity in the direction of $\bu^*$. Hence, they will move in a line and diverge to the infinity point.

\subsubsection{Case 2 : The circumvention mode}\label{sec:steady_1}

While we found a linear motion in the pursuit mode, we consider a solution $\bar \bu(t)$ representing rotational motion in the circumvention mode.

 Let $\kappa^p(t)\equiv 1$ and $\kappa^c(t)\equiv \kappa^c$ with nonzero constant $\kappa^c$. The presence of a perpendicular force $\kappa^c\bu^\perp$ will rotate the relative position $\bu(t)$ with the help of the friction $\nu \bv$, where $f(|\bu|)\bu$ acts as a centripetal force.

Instead of an equilibrium of $\bu(t)$, here we start with an ansatz of rotational motion:
\[\bar \bu(t) = r_c(\cos (w_c t+\phi_1),\sin (w_c t+\phi_1), \]
for some constants $\phi_1$, $w_c$ and $r_c >0$.
By putting this equation into \eqref{GBR_rel}, we have
\begin{equation*}
\begin{aligned}
-(w_c)^2 \bu(t) + f(r_c)\bu(t) + \nu w_c \bu(t)^\perp = \kappa^c \bu(t)^\perp.
\end{aligned}
\end{equation*}
Since $\bu(t)$ and $\bu(t)^\perp$ are perpendicular, we classify the above terms with their directions, $\bu$ or $\bu^\perp$, which leads to the following compatibility conditions:
\[ f(r_c) = (w_c)^2 \quad\text{and}\quad \nu w_c = \kappa^c. \]
From the assumptions \eqref{eq_f} and \eqref{f_g} on $f$, there exists a solution $r_c>0$ to the equation
\begin{equation}\label{r_c}
f(r_c) = \Big(\frac{\kappa^c}{\nu}\Big)^2,
\end{equation}
if $\kappa^c$ is small (in particular, $|\kappa^c| < \nu\sqrt{\gamma_m}$). Hence, there is a rotational solution $\bar\bu(t)$,
\begin{equation}\label{steady_1u}
\bar \bu(t) = r_c \left( \cos \Big(\frac{\kappa^c}{\nu}t+\phi_1\Big),\sin \Big(\frac{\kappa^c}{\nu}t+\phi_1\Big) \right).
\end{equation}

Now we want to determine asymptotic solutions $\bu_d(t)$ and $\bu_e(t)$ with this $\bar\bu(t)$. From \eqref{GBR_simple}, we have
\begin{equation*}
\begin{aligned}
\ddot {{\bu}}_d = -f_d(r_c)\bar \bu + \kappa^c \bar \bu^\perp - \nu \dot{\bu}_d\quad\text{and}\quad
\ddot {{\bu}}_e = -f_e(r_c)\bar \bu - \nu \dot{\bu}_e.
\end{aligned}
\end{equation*}
From the evader's equation, $\bu_e$ experiences a periodic external force from $\bar \bu$ with a dissipation $-\nu\dot{\bu}_e$. Therefore, $\bu_e$ will tend to draw a circular trajectory with the same frequency as $\bar\bu$. In the same way, $\bu_d$ also tends to a rotational motion. From \eqref{GBR_simple} and $\bu_d(t) - \bu_e(t) = \bar\bu(t)$, the asymptotic solutions $\bar\bu_d(t)$ and $\bar\bu_e(t)$ are determined as
\begin{equation}\label{steady_1}
\begin{aligned}
\bar \bu_d(t) &= r_d \left( \cos \Big(\frac{\kappa^c}{\nu}t + \phi_d\Big),\sin \Big(\frac{\kappa^c}{\nu}t + \phi_d\Big) \right) + \bu^*_c \quad\text{and}\quad \\
\bar \bu_e(t) &= r_e \left( \cos \Big(\frac{\kappa^c}{\nu}t + \phi_e\Big),\sin \Big(\frac{\kappa^c}{\nu}t + \phi_e\Big) \right) + \bu^*_c,
\end{aligned}
\end{equation}
for some point $\bu^*_c \in \mathbb R^2$, and the constants $r_d$, $r_e$, $\phi_d$, $\phi_e$ satisfy
\begin{equation*}
\begin{aligned}
&r_d = \frac{\sqrt{f_d(r_c)^2+(\kappa^c)^2 }}{\sqrt{(\kappa^c/\nu)^4+(\kappa^c)^2}} r_c,\quad
r_e = \frac{f_e(r_c)}{\sqrt{(\kappa^c/\nu)^4+(\kappa^c)^2}} r_c,\\
&\phi_d = \phi_1 - \arctan\frac{\nu^2}{\kappa^c} - \arctan \frac{\kappa^c}{f_d(r_c)} \quad\text{and}\quad
\phi_e = \phi_1 - \arctan\frac{\nu^2}{\kappa^c},
\end{aligned}
\end{equation*}
with $\phi_1 \in [0,2\pi)$ from $\bar \bu(t)$. As in Case 1, the solutions \eqref{steady_1} of \eqref{GBR_simple} form a family of rotational motion where $\phi_1 \in [0,2\pi)$ and $\bu_c^* \in \mathbb R^2$ are arbitrary.

In this \emph{circumvention dynamics}, the driver and evader rotates with the same angular velocities, which is proportional to the circumvention control $\kappa^c$. They also share the same center point, but have different radiuses and phases, so that the driver follows the evader from the outer circle orbit.

\subsubsection{Case 3 : The release mode}
The release mode is the case when the pursuit and circumvention stop, $\kappa^p \equiv 0$ and $\kappa^c \equiv 0$. Then, we have
\[ \ddot\bu_d(t) + \nu\dot\bu_d(t) = 0, \]
so that the driver stops its motion exponentially fast and tends to $\bar\bu_d(t) = \bu_d^*$.

 In turn, the relative position $\bu(t)$ follows
\[ \ddot\bu(t) + \nu\dot\bu(t) = f_e(|\bu(t)|)\bu(t).\]
Since $f_e$ is nonnegative, $\bu(t)$ grows toward its initial direction until $f_e(|\bu(t)|)$ is zero, under the potential energy $P_r(\bu)$:
\[ \ddot\bu(t) + \nu\dot\bu(t) = -\nabla P_r(\bu(t)),\quad P_r(\bu) := \int_{|\bu(0)|}^{|\bu(t)|} -rf_e(r)dr. \]

Therefore, in the \emph{release dynamics}, the evader will escape from the driver to a safe distance and flock together again. 

\subsubsection{The stability to the asymptotic motion}

Next, we want to prove that the dynamics asymptotically converges to the solutions \eqref{steady_0} and \eqref{steady_1} in the pursuit and circumvention mode, respectively.

Note that we assumed the constant controls, so that we may use the well-posedness result, Theorem \ref{T2.1}. Note again that we may assume $\kappa^p(t) \equiv 1$ from Remark \ref{R2.1}.
 
\begin{theorem}\label{T2.12}
Suppose that the function $f_d(r)$ and $f_e(r)$ satisfy \eqref{eq_f} and \eqref{f_g}.
Then, the following properties hold for constant controls $\kappa^p(t) \equiv 1$ and $\kappa^c(t) \equiv \kappa^c$.
\begin{itemize}
\item
If $\kappa^c = 0$, then $\bu(t)$ converges to a constant vector $\bu^* \in \mathbb R^2$ with $|\bu^*| = r_p$ and $f(r_p) = 0$, as in \eqref{eq_f}. Moreover, $\bu_d(t)$ and $\bu_e(t)$ converge asymptotically to the linear pursuit motion $\bar\bu_d(t)$ and $\bar\bu_e(t)$ in \eqref{steady_0}. The parameters $\phi_0$ and $\bu^*_e$ in \eqref{steady_1} are determined by the initial data.
\item
If $0<|\kappa^c| < \nu\sqrt{\gamma_m}$, then $|\bu(t)|$ converges to the constant distance $r_c$ with $f(r_c) = (\kappa^c)^2/\nu^2$, as in \eqref{r_c}. Moreover, $\bu_d(t)$ and $\bu_e(t)$ converges asymptotically to rotational circumvention motion $\bar\bu_d(t)$ and $\bar\bu_e(t)$ in \eqref{steady_1}. The parameters $\phi_1$ and $\bu^*_c$ in \eqref{steady_1} are determined by the initial data.
\end{itemize}
\end{theorem}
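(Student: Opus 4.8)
The plan is to route everything through the closed relative equation \eqref{GBR_rel}, read off the asymptotics of $\bu=\bu_d-\bu_e$, and then recover $\bu_d$ and $\bu_e$ from the decoupled linear equations in \eqref{GBR_simple}. By Theorem \ref{T2.1} — with $\kappa^p\equiv1$, which we may assume by Remark \ref{R2.1}, and $|\kappa^c|<\nu\sqrt{\gamma_m}$ — the relative position stays in a fixed compact annulus $0<r_-\le|\bu(t)|\le r_+$, and with the energy bound this confines the whole state $(\bu,\dot\bu)$ to a compact set, so that LaSalle's invariance principle and Barbalat's lemma are available. For $\kappa^c\ne0$ I would first make the problem autonomous by going to the frame co-rotating at angular velocity $w_c:=\kappa^c/\nu$: writing $\bu(t)=R(w_c t)\,\by(t)$ with $R(\alpha)$ the planar rotation by $\alpha$ and $J\bv:=\bv^\perp$, and using the compatibility relations $\nu w_c=\kappa^c$ and $w_c^2=f(r_c)$ from \eqref{r_c}, a short computation turns \eqref{GBR_rel} into the autonomous gyroscopic system
\[ \ddot\by+\nu\dot\by+2w_c J\dot\by+\bigl(f(|\by|)-w_c^2\bigr)\by=0 . \]
Case 1 ($\kappa^c=0$) is the degenerate value $w_c=0$, $r_c=r_p$, where $R$ is the identity.

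Next I would prove $|\bu(t)|\to r_c$. Since the gyroscopic term $2w_c J\dot\by$ is orthogonal to $\dot\by$ and does no work, $V:=\tfrac12|\dot\by|^2+Q(|\by|)$ with $Q(r):=\int_{r_c}^{r}s\bigl(f(s)-w_c^2\bigr)\,ds$ obeys $\dot V=-\nu|\dot\by|^2\le0$; as $Q$ is bounded below on the annulus, $V$ converges, $\dot\by\in L^2(0,\infty)$ (hence, in rotating polar coordinates $\by=r(\cos\beta,\sin\beta)$, also $\dot r,\ r^2\dot\beta\in L^2$), and — $\ddot\by$ being bounded — Barbalat gives $\dot\by\to0$. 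By LaSalle the $\omega$-limit set then lies in $\{\dot\by=0,\ f(|\by|)=w_c^2\}$, and, provided $r_c$ is an isolated zero of $f-w_c^2$ (automatic for $|\kappa^c|$ small by $f'(r_p)>0$ in \eqref{eq_f}, and true in the relevant examples), connectedness of the $\omega$-limit set forces $|\bu(t)|\to r_c$. The same conclusion comes quantitatively from the radial equation $\ddot r+\nu\dot r+r\bigl(f(r)-w_c^2\bigr)=r\dot\beta(\dot\beta+2w_c)$ — a one-dimensional damped oscillator in the potential $Q$ with an $o(1)$, $L^2$ right-hand side — together with $Q'(r_c)=0$, $Q''(r_c)=r_c f'(r_c)>0$ and Barbalat, which exclude every other accumulation value and give $r(t)\to r_c$ (resp.\ $r_p$). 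This is where the ``Mexican-hat'' structure \eqref{eq_f}--\eqref{f_g} is used.

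It remains to fix the asymptotic phase and then transfer the information to $\bu_d$ and $\bu_e$. The angular component of the co-rotating system reads $\frac{d}{dt}\bigl(r^2\dot\beta\bigr)+\nu\,r^2\dot\beta=-2w_c r\dot r$. In Case 1 the right-hand side vanishes, so the angular momentum $r^2\dot\beta$ decays like $e^{-\nu t}$; since $r\ge r_->0$ this gives $\dot\beta\in L^1$ and $\beta(t)\to\phi_0$, hence $\bu(t)\to\bu^*$. In Case 2 I would set $m:=r^2\dot\beta$, so $\dot m=-\nu m-2w_c r\dot r$ with $m,\ r\dot r\in L^2$, whence $m\in H^1(0,\infty)$ and $m\to0$; then solving $\dot\beta=m/r^2=-\tfrac{1}{\nu r^2}\bigl(\dot m+2w_c r\dot r\bigr)$ and integrating by parts (using $m,\dot r\in L^2$ and $r\to r_c$) shows $\int_0^\infty\dot\beta\,dt$ converges, so $\beta(t)\to\phi_1$; undoing the rotation, $\bu(t)-\bar\bu(t)\to0$ with $\bar\bu$ as in \eqref{steady_1u}. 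The free constants $\phi_0,\bu^*_e$ (resp.\ $\phi_1,\bu^*_c$) are thereby determined by the initial data. Finally, $\bu_d$ and $\bu_e$ satisfy linear second-order equations with friction $\nu$ and right-hand sides $-f_d(|\bu|)\bu+\kappa^c\bu^\perp$ and $-f_e(|\bu|)\bu$; once the trajectory is near the limit circle the linearization transverse to it is exponentially stable (a Routh--Hurwitz check), so $\bu-\bar\bu$ decays exponentially, and variation of constants then gives $\bu_d(t)-\bar\bu_d(t)\to0$ and $\bu_e(t)-\bar\bu_e(t)\to0$ with the profiles \eqref{steady_0} and \eqref{steady_1}.

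The hard part is the phase convergence. A purely soft argument — LaSalle together with the compactness from Theorem \ref{T2.1} — shows only that the $\omega$-limit set lies in an entire \emph{circle} of equilibria ($\{|\by|=r_c\}$ in the rotating frame), every point of which is a rest point of the autonomous system, so nothing in it singles out $\phi_0$ or $\phi_1$. What breaks this symmetry is that the (modified) angular momentum $r^2\dot\beta$ tends to $0$ fast enough to be integrable in time; this is available precisely because the equal-friction hypothesis $\nu_d=\nu_e=\nu$ makes \eqref{GBR_rel} a closed equation and because $|\bu|$ stays bounded away from $0$ (Theorem \ref{T2.1}), and the same rate is what makes the transfer to $\bu_d,\bu_e$ work. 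A secondary technical point is the nondegeneracy $f'(r_c)>0$ together with isolatedness of $r_c$: it is automatic for $|\kappa^c|$ small and holds in the examples of interest, but without a monotonicity assumption on $f$ one can in general only assert that $|\bu(t)|$ approaches the critical set $\{f=w_c^2\}$.
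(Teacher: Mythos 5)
Your proposal is correct, and while it rests on the same skeleton as the paper's proof — the paper's Lemmas \ref{L_nonlinear_zero} and \ref{L_evader_asymp} use the energy $E$ and the Lyapunov function $L_\kappa(\bx)=E(\bx)-\tfrac{\kappa^c}{\nu}\bu^\perp\cdot\bv$ together with LaSalle, and your rotating-frame energy $V=\tfrac12|\dot\by|^2+Q(|\by|)$ is exactly $L_\kappa$ up to an additive constant, since $|\dot\by|=|\bv-\tfrac{\kappa^c}{\nu}\bu^\perp|$ — it goes genuinely further at the point where the paper is soft. The paper's argument stops at ``the $\omega$-limit set lies in $\{\bv=0\}$ (resp.\ $\{\nu\bv=\kappa^c\bu^\perp\}$)'' and then asserts convergence to a \emph{specific} equilibrium $\bu^*$, resp.\ a specific rotating solution $\bar\bu(t)$, invoking a linearization with $f'(r_p)>0$ for an exponential rate; LaSalle alone only gives approach to the whole circle $\{|\bu|=r_p\}$ (or to the family of circular motions over all phases $\phi_1$), so the selection of $\phi_0,\phi_1$ — and hence of $\bu_e^*,\bu_c^*$ — is not actually derived there. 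Your passage to the co-rotating frame, which makes the system autonomous with a one-parameter circle of equilibria, plus the angular-momentum identity $\frac{d}{dt}(r^2\dot\beta)=-\nu r^2\dot\beta-2w_c r\dot r$ and the resulting integrability of $\dot\beta$, supplies precisely the missing asymptotic-phase argument, and your variation-of-constants step makes explicit why the \emph{positions} $\bu_d,\bu_e$ (not just the velocities) converge to \eqref{steady_0}, \eqref{steady_1}. The price, which you flag honestly, is the nondegeneracy/isolatedness of the root $r_c$ of $f(r)=(\kappa^c/\nu)^2$ (with $f'(r_c)>0$ for the exponential transverse decay); the paper's proof implicitly relies on the same thing through its linearization at $r_p$ and its tacit uniqueness of $r_c$, so this is a shared limitation rather than a defect of your route.
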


The proof of Theorem \ref{T2.12} uses LaSalle's invariance principle, which is described in Lemma \ref{L_nonlinear_zero} and \ref{L_evader_asymp}. 
The parameters $\phi_0$, $\bu_e^*$, $\phi_1$ and $\bu^*_c$ contain the information on the final direction and location of the driver and evader, where they are completely determined by the initial data. However, it is hard to specify these parameters explicitly, though the relative distances $r_p$ and $r_c$ are easily calculated by $f_d$ and $f_e$.

\subsection{Controllability on the evader's position}\label{sec:pre}

In the simplified model \eqref{GBR_simple}, Definition \ref{D1.1} can be described as follows. For a given $\bu_f \in \mathbb R^2$, we want to find control functions $\kappa^p$ and $\kappa^c$ which implies
\[ \bu_e(t_f) = \bu_f, \]
for some time $t_f>0$. 

As a heuristic guess based on Theorem \ref{T2.12}, we may use an off-bang-off control, which is also studied in \cite{escobedo2016optimal}:
\begin{equation}\label{control_kappa}
\kappa^p(t) = \kappa^p >0
\quad\text{and}\quad
\kappa^c(t) = \begin{cases} 0~ \quad \text{if }~ t \in [0,t_1)\cup(t_2,t_f],\\
\kappa^c \quad \text{if }~ t \in [t_1,t_2],
 \end{cases}
\end{equation}
for some constants $t_1$, $t_2$, $\kappa^p$ and $\kappa^c$.

Using \eqref{control_kappa}, we can concatenate two types of controlled trajectories, \eqref{steady_0} and \eqref{steady_1}. First, we start with the pursuit mode $\kappa^c(t) = 0$ and wait until time $t_1$, so that the driver is close enough to the evader. After $t_1$, we set a nonzero constant circumvention control $\kappa^c(t) = \kappa^c$, so that the driver and evader rotate continuously. After $t_2$, the evader tends to the linear motion again, and its direction is determined by the choice of $t_2$.

\begin{theorem}\label{T2.2}
Suppose that $f_d(r)$, $f_e(r)$, $\kappa^p$ and $\kappa^c$ satisfy \eqref{eq_f}, \eqref{f_g},
\[\kappa^p > 0\quad\text{and}\quad \kappa^c < \nu \sqrt{\kappa^p\gamma_m},\]
and the initial data satisfies $\bu^0_d \neq \bu^0_e$.
Then, for a given $\bu_f \in \mathbb R^2$, there exist $t_1$, $t_2$ and $t_f$ such that the solution of \eqref{GBR_simple} satisfies
\[ \bu_e(t_f) = \bu_f, \]
under the off-bang-off control \eqref{control_kappa}.
\end{theorem}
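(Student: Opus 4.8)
The plan is to realise the off-bang-off control \eqref{control_kappa} as a concatenation of the three asymptotic regimes analysed in Section \ref{sec:steady}, and then to run a continuity/degree argument in the three switching times $t_1\le t_2\le t_f$ so as to hit the prescribed point $\bu_f$ exactly. First I would record the structural facts. For every choice $0\le t_1\le t_2\le t_f$ the control \eqref{control_kappa} is uniformly bounded, so Theorem \ref{T2.1} yields a global solution of \eqref{GBR_simple}; since $\bu^0_d\neq\bu^0_e$ and $\kappa^p>0$, the relative position $\bu(t)$ stays away from $(0,0)$ on $[0,t_f]$, and on each of the three subintervals the vector field is autonomous and smooth near the trajectory, so $(t_1,t_2,t_f)\mapsto\bu_e(t_f)$ is continuous (indeed $C^1$) by the standard dependence-on-parameters theory. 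It therefore suffices to show that this planar map is surjective onto $\bbr^2$. Throughout write $\be(\theta):=(\cos\theta,\sin\theta)$.

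Second, I would feed Theorem \ref{T2.12} into each phase. In Phase 1 ($\kappa^c\equiv0$ on $[0,t_1)$), as $t_1$ grows $\bu(t_1)\to\bu^*$, $\dot\bu(t_1)\to0$ and $\bv_e(t_1)\to\mu\,\be(\phi_0+\pi)$ with drift speed $\mu:=f_d(r_p)r_p/\nu>0$ and a direction $\phi_0$ fixed by the initial data; in particular $\bu_e(t_1)=\mu\,\be(\phi_0+\pi)\,t_1+O(1)$, so enlarging $t_1$ pushes the evader arbitrarily far in the fixed direction $\be(\phi_0+\pi)$. In Phase 2 ($\kappa^c(t)\equiv\kappa^c$ on $[t_1,t_2]$), the hypothesis $\kappa^c<\nu\sqrt{\kappa^p\gamma_m}$ together with Remark \ref{R2.1} (so $\kappa^p=1$) puts us in the range of Theorem \ref{T2.12}: $|\bu|$ relaxes to $r_c$ while $\bu$ rotates at asymptotic angular speed $|\kappa^c|/\nu\neq0$, hence the net rotation angle of $\bu$ over $[t_1,t_2]$ is a continuous, unbounded function of $t_2-t_1$, the direction handed to Phase 3 sweeps all of $S^1$ as $t_2-t_1$ runs over one period $2\pi\nu/|\kappa^c|$ (modulo the transient), and $\bu_e(t_2)$ stays in a bounded set $K=K(t_1)$ that is translated (not dilated) as $t_1\to\infty$. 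In Phase 3 ($\kappa^c\equiv0$ on $(t_2,t_f]$), Theorem \ref{T2.12} again gives $\bu(t)\to r_p\,\be(\psi)$ and $\bu_e(t)=\mu\,\be(\psi+\pi)\,t+\bu_e^{**}+o(1)$ as $t\to\infty$, where the exit direction $\psi$ depends continuously on the state at $t_2$ and, by the Phase 2 analysis, is surjective onto $S^1$, while $\bu_e^{**}$ stays bounded.

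Composing the three phases, for $t_1$ large and fixed the ``asymptotic'' final position is $\bu_e(t_f)\approx\bu_e^{**}+\mu\,\be(\psi)\,(t_f-t_2-c)$ with $\bu_e^{**}$ in a bounded set $K$ and $\psi$ free to be any value in $S^1$ via the choice of $t_2$; consequently the image of the asymptotic map contains every point at distance $\ge\rho_0$ from $K$. Since $K=K(t_1)$ can be translated arbitrarily far in the fixed direction $\be(\phi_0+\pi)$ by enlarging $t_1$, any prescribed $\bu_f$ can be placed at arbitrarily large distance from $K$, hence made arbitrarily close to the image.

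The main obstacle is upgrading ``arbitrarily close to $\bu_f$'' to ``exactly $\bu_f$''. For this I would fix $t_1$ large and study the continuous planar map $G:(t_2,t_f)\mapsto\bu_e(t_f)$ on a rectangle $R=[t_2^-,t_2^+]\times[t_f^-(t_2),t_f^+(t_2)]$ whose $t_2$-side covers a full circumvention period and whose $t_f$-side is large and coupled appropriately to $t_2$; the asymptotic formula shows $G|_{\partial R}$ is uniformly close to a loop winding exactly once around $\bu_f$, whence $\deg(G-\bu_f,R,0)=1$ and $\bu_f\in G(R)$. Making this rigorous requires two things that are the real content of the proof: (i) the convergences in Theorem \ref{T2.12} must be \emph{uniform} over the compact families of initial data produced at the end of each phase, so that the $O(1)$ and $o(1)$ terms above are genuinely uniform (this should follow from the quantitative structure of the LaSalle/Lyapunov argument behind Theorem \ref{T2.12}, but has to be checked); and (ii) the limiting boundary loop must be shown to be nondegenerate, i.e. to encircle $\bu_f$ rather than merely to graze it, which reduces to a quantitative winding estimate from the explicit solutions \eqref{steady_0}–\eqref{steady_1}. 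A secondary point is to confirm that the circumvention centre and the Phase 3 offset $\bu_e^{**}$ remain in a set $K(t_1)$ that merely translates with $t_1$, so that ``far from $K$'' is attainable for every target $\bu_f$.
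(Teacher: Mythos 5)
Your proposal follows essentially the same route as the paper: concatenate the pursuit, circumvention and pursuit asymptotics of Theorem \ref{T2.12} under the off-bang-off control \eqref{control_kappa}, and vary $(t_1,t_2,t_f)$ so that the reachable set covers the plane --- the paper argues that, for fixed $t_1$, the exit trajectories after switch-off (whose rotational component decays since $\frac{d}{dt}(\bu^\perp\cdot\bv)=-\nu\,\bu^\perp\cdot\bv$) sweep the whole exterior of the asymptotic orbit's disk, and then uses $t_1$ to translate that disk, just as you use $t_1$ to move the compact set $K(t_1)$. Your degree/winding argument and the uniformity caveats are simply a more explicit rendering of the covering step that the paper treats informally, so the two arguments coincide in substance.
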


\begin{remark}
\begin{enumerate}
\item
Note that we only guarantee the position at $t_f$, and cannot say about the trajectory after $t_f$. The asymptotic solutions \eqref{steady_0} and \eqref{steady_1} have nonzero velocities, so that we can not get $\dot\bu_e(t_f) = 0$ in general (see also Figure \ref{fig:simus8-0} in the simulation section). 
\item
Since we assumed $f_e$ has an infinite integral in \eqref{f_g}, the solution from singular initial data $\bu^0_d = \bu^0_e$ is not well-defined. 
\item
The final time $t_f$ has a lower bound when the controls $\kappa^p(t)$ and $\kappa^c(t)$ are uniformly bounded by some constants. This is also from the asymptotic solutions, \eqref{steady_0} and \eqref{steady_1}, since they have bounded velocities.
\item
If we can use large enough controls, then we may expect $t_f$ can be arbitrary small, however, this cannot be proved rigorously in this paper since our analysis is only on the asymptotic motion.
\end{enumerate}
\end{remark}

\begin{figure}[ht]
  \centering
  {
    \includegraphics[width=0.7\textwidth]{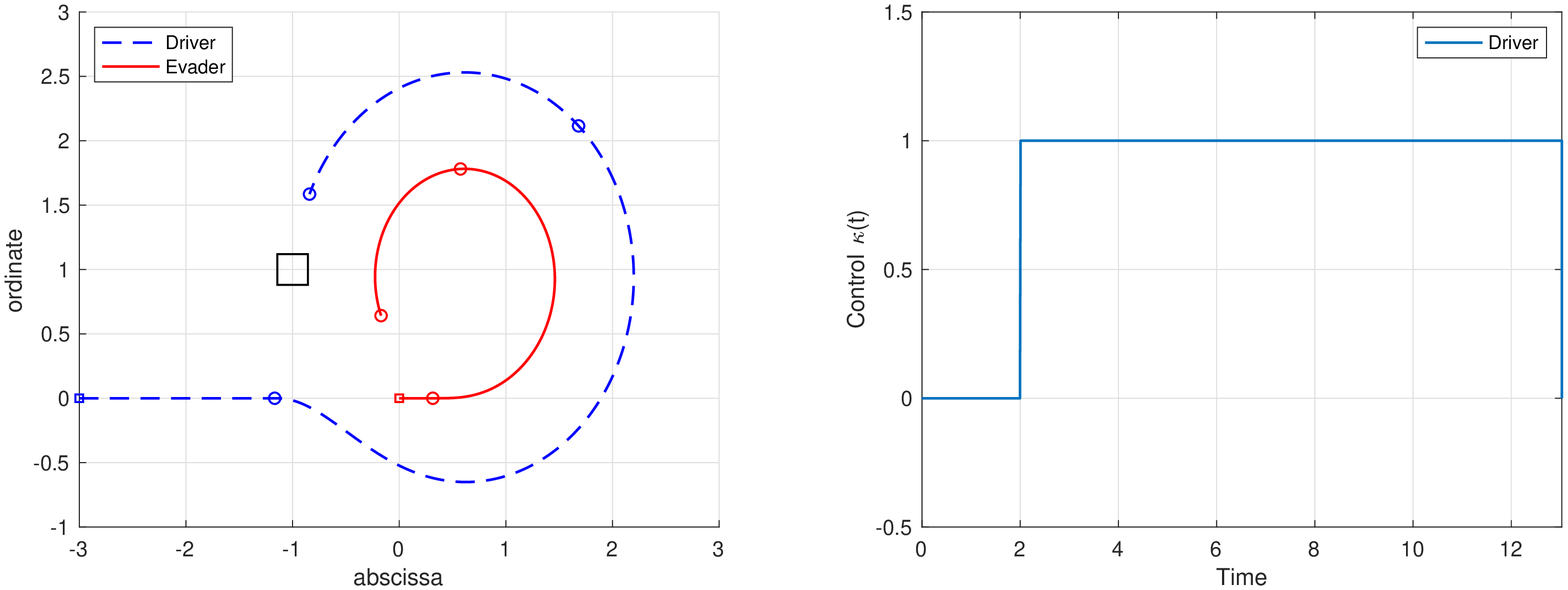}
  }
  \centering
  {
    \includegraphics[width=0.7\textwidth]{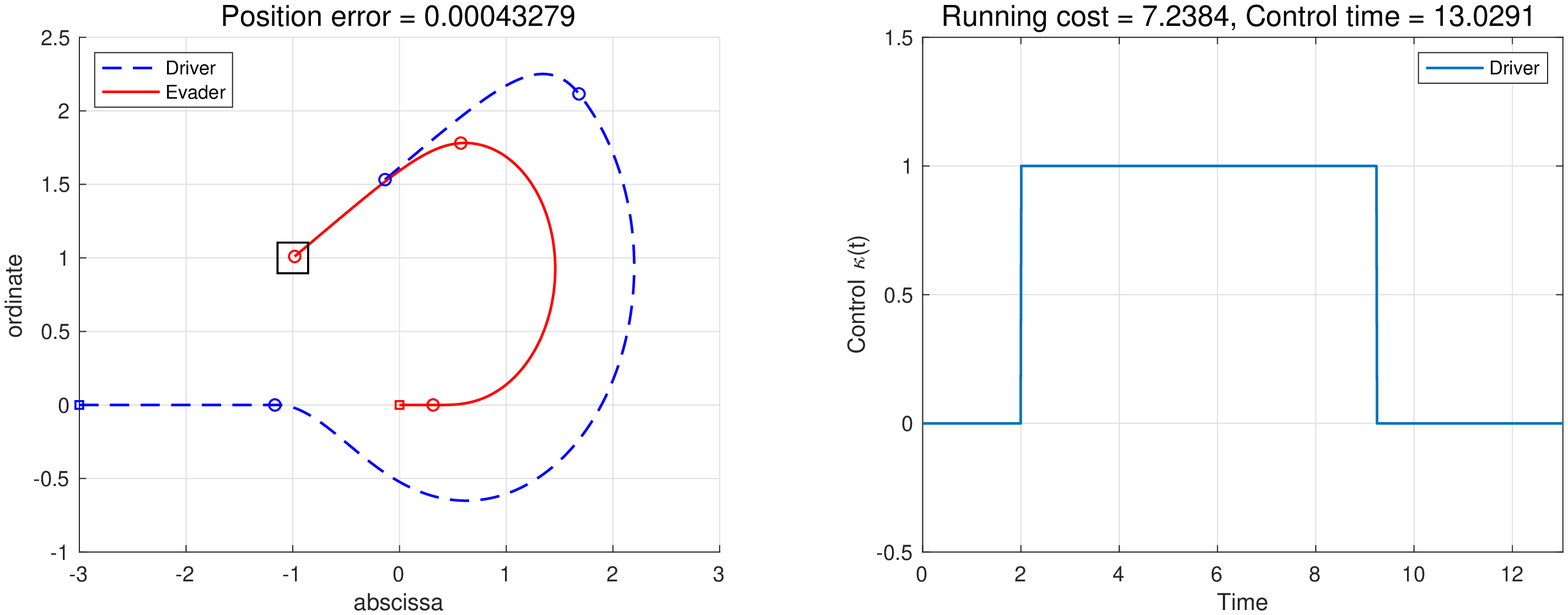}
  }
  \caption{ Diagrams for the off-bang-off control leading to $\bu_e(t_f) \simeq (-1,1)$: A constant control $\kappa^c(t) = 1$ after $t_1=2$ (top right) leads to rotational motion of two agents (top left). If we turn off the control after $t_2 \simeq 9.26$ (bottom right), then we can shot the evader to $(-1,1)$ (bottom left). Positions at time $t_1=2$, $t_2=9.256$ and $t_f=13.0421$ are marked as circles in the left figures. }
  \label{fig:simus1}
\end{figure}

Figure \ref{fig:simus1} shows one example with the interaction functions \eqref{fd_fe}. 
From Theorem \ref{T2.2}, we may achieve the final position exactly from any initial data.

In this simulation, the final position $(-1,1)$ is approximately achieved from given initial data $\bu_d = (-3,0)$ and $\bu_e = \bv_d = \bv_e = (0,0)$. We fix $t_1=2$ and $\kappa^c=1$ for convenience, and determine $t_2$ and $t_f$ using Matlab fmincon solver, minimizing the position error $|\bu_e(t_f) - (-1,1)|^2$.
Then, the driver perform circumvention maneuver for $t \in [2.0,9.256]$, and the evader passes the position near $(-1,1)$ at time $t_f = 13.0421$. This simulation is done with $1000$ time grids for $t \in [0,t_f]$.

The off-bang-off control \eqref{control_kappa} can be extended to multiple target points as in Figure \ref{fig:multi_target}. It describes one of the controlled trajectories passing through $6$ given points, $(3,3)$, $(4.5,5)$, $(6,1)$, $(9,3)$, $(7.5,5)$ and $(6,7)$, approximatly calculated by Matlab fmincon solver.

\begin{figure}[ht]
  \centering
  {
    \includegraphics[width=0.45\textwidth]{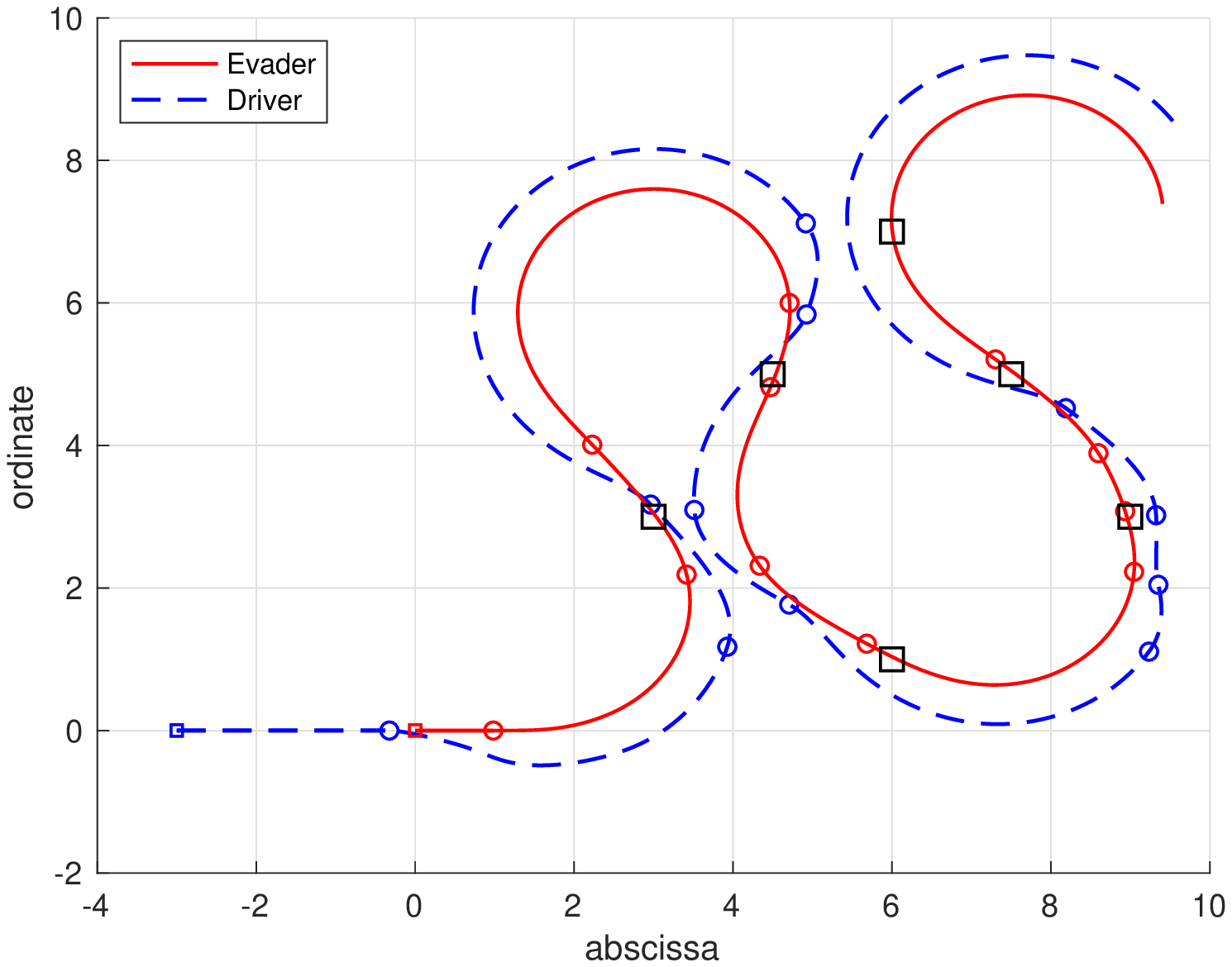}
    \includegraphics[width=0.45\textwidth]{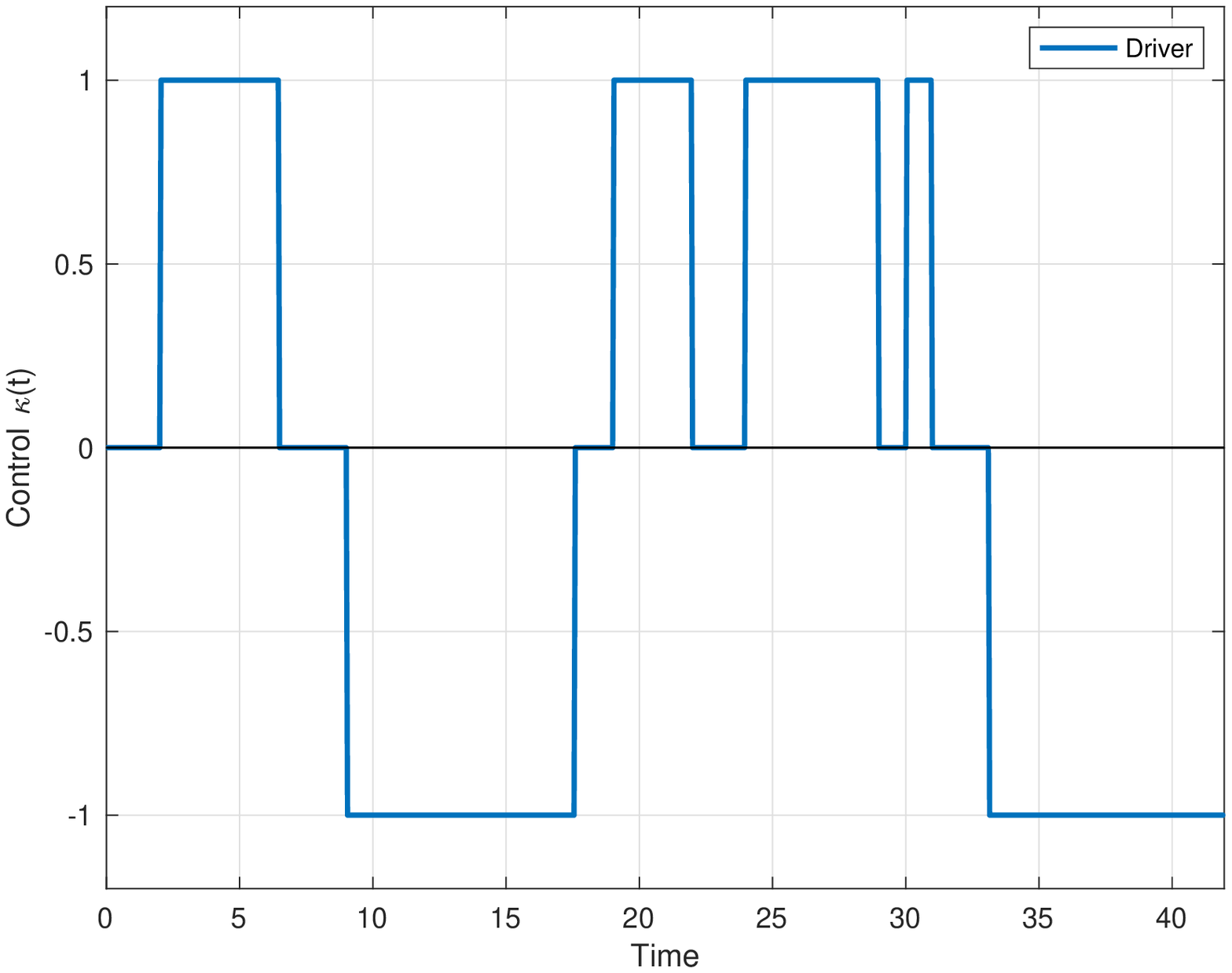}
  }
  \caption{A trajectory of the evader which passes near points $(3,3)$, $(4.5,5)$, $(6,1)$, $(9,3)$, $(7.5,5)$ and $(6,7)$ (left). The control $\kappa^c(t)$ is designed to minimize the distance between the trajectory and the target points one by one (right). Positions at times turning on and off the control are denoted by circle marks, and the target points are denoted by black boxes.}
  \label{fig:multi_target}
\end{figure}

From this simulation, we may observe that a constant circumvention control $\kappa^c(t) = 1$ make the evader rotates along an asymptotic circle, whose center and radius are completely determined by $t_1$ and $\kappa^c$. Since the driver will escape this circle after $t_2$, we may plot all possible final point $\bu_e(t_f)$ over the possible values of $t_2$ and $t_f$. Then, it will cover the whole space outside of the circle.

The proof of Theorem \ref{T2.2} follows the same argument, which is presented in Section \ref{sec:3.3}.

\vspace{1em}
\section{Proofs of results on the simplified model}\label{sec:control}

In this section, we present proofs of Theorem \ref{T2.1}, \ref{T2.12} and \ref{T2.2}. The proofs of the first two theorem use the Lyapunov function method, and then Theorem \ref{T2.2} is a consequence of two theorems.

All the Lyapunov functions are based on a standard energy $E$ of the relative position $\bu$. Let $\bx$ represent the phase state $\bx = (\bu_d,\bu_e,\bv_d,\bv_e)$, then the standard energy $E(\bx)$ is defined by
\begin{equation*}
E(\bx) := \frac{1}{2}|\bv|^2 + P(\bx) = \frac{1}{2}|\bv|^2 + \int_{r_p}^{|\bu|} sf(s)ds.
\end{equation*}
From the assumption \eqref{f_g} on $P$, we have
\[ E(\bx) = \infty \quad\text{if and only if}\quad |\bu|=0, \]
and $E(\bx)$ grows quadratically as $|\bu| \to \infty$. Hence, if every trajectory has a bounded energy along time, then we may conclude the upper and lower boundedness of $|\bu|$. 

\subsection{Global well-posedness}

From the definition of $E(\bx)$ and the equation \eqref{GBR_simple}, its time derivative of $E(\bx)$ can be calculated explicitly:
\begin{equation*}
\begin{aligned}
\dot E(\bx) &= \bv \cdot \dot {\bv} + f(|\bu|)\bu\cdot\dot{\bu}\\
&= \bv \cdot (-(\kappa^p(t)f_d(|\bu|)-f_e(|\bu|))\bu - \nu \bv + \kappa^c(t) \bu^\perp) + f(|\bu|)\bu \cdot \bv\\
&= -\nu|\bv|^2 + (1-\kappa^p(t))f_d(r)\bu\cdot\bv + \kappa^c(t) \bu^\perp \cdot \bv.
\end{aligned}
\end{equation*}

If $\kappa^p(t) \equiv 1$ and $\kappa^c(t) \equiv 0$, the pursuit mode, then we have $\dot E(\bx) \leq 0$. This guarantees that the energy $E(\bx)$ is uniformly bounded, hence, $|\bu(t)|$ is bounded from above and below. 

The problem is that, in general, $\dot E(\bx)$ may have a positive value and $E(\bx)$ can increase.
For the well-posedness of \eqref{GBR_simple}, we need to estimate the growth of $E(\bx)$ along time.

\begin{lemma}\label{L_nonlinear_collision}
Suppose that \eqref{eq_f} and \eqref{f_g} hold. If initially $\bu_d(0) \neq \bu_e(0)$ and $|\kappa^p(t)|$ and $|\kappa^c(t)|$ are uniformly bounded, then $\bu = \bu_d - \bu_e$ of the solution \eqref{GBR_simple} cannot blow-up to $\infty$ or hit $(0,0)$ in a finite time.

This guarantees the existence and uniqueness of the global solution $\bu_d(t)$ and $\bu_e(t)$ when $\bu_d(0) \neq \bu_e(0)$.
\end{lemma}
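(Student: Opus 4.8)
The plan is to run a Gronwall estimate on the energy $E(\bx(t)) = \frac12|\bv|^2 + P(\bu)$ already introduced, exploiting the two structural features of $P$ built into \eqref{eq_f}--\eqref{f_g}. First, $P \ge 0$ everywhere: on $(0,r_p)$ one has $f \le 0$, so $\int_{r_p}^{|\bu|} sf(s)\,ds = \int_{|\bu|}^{r_p} s(-f(s))\,ds \ge 0$, and on $[r_p,\infty)$ one has $f \ge 0$, so the integral is again nonnegative. Hence $|\bv|^2 \le 2E$. Second, $P$ is coercive at both ends: $P(\bu)\to+\infty$ as $|\bu|\to 0$ because $\int_0^{r_p} sf_e(s)\,ds=\infty$ while $\int_0^{r_p} sf_d(s)\,ds<\infty$, and $P$ grows quadratically at infinity because $f(s)\to\gamma_m>0$; concretely, picking $R\ge r_p$ with $f(s)\ge\gamma_m/2$ for $s\ge R$ and integrating gives $P(\bu)\ge\frac{\gamma_m}{4}(|\bu|^2-R^2)$, so $|\bu|^2 \le R^2 + \frac{4}{\gamma_m}E$, i.e.\ $|\bu|\le C(1+\sqrt E)$.

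Next I would plug in the expression for $\dot E$ computed just above the statement, $\dot E = -\nu|\bv|^2 + (1-\kappa^p(t))f_d(|\bu|)\,\bu\cdot\bv + \kappa^c(t)\,\bu^\perp\cdot\bv$. Since $f_d\in L^\infty(\mathbb R_+)$ and $\kappa^p,\kappa^c$ are uniformly bounded, the last two terms are $\le C|\bu|\,|\bv|$; discarding the nonpositive term $-\nu|\bv|^2$ (or, equivalently, absorbing it by Young's inequality) gives $\dot E \le C|\bu|\,|\bv| \le C(1+\sqrt E)\sqrt{2E} \le C(1+E)$. Gronwall's inequality then yields $1+E(\bx(t)) \le (1+E(\bx(0)))e^{Ct}$, and $E(\bx(0))<\infty$ precisely because $\bu_d(0)\neq\bu_e(0)$ (this is the only place nondegeneracy of the initial data enters). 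Therefore $E$ stays finite on every bounded time interval.

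Finally I would cash in finiteness of $E$ to get the non-blow-up claim. On $[0,T]$ we have $P(\bu(t))\le E(t)\le C_T$, and coercivity of $P$ at $0$ and at $\infty$ then forces $\delta_T \le |\bu(t)| \le R_T$ for constants depending only on $C_T$ — this is exactly the assertion that $\bu$ cannot hit $(0,0)$ nor escape to $\infty$ in finite time. To pass from this to global existence and uniqueness of $(\bu_d,\bu_e)$, note that $|\bv(t)|\le\sqrt{2E(t)}$ is bounded, and $\bu_e$ solves the linear ODE $\dot\bv_e = -f_e(|\bu|)\bu - \nu\bv_e$ whose forcing $f_e(|\bu(t)|)\,|\bu(t)|$ is bounded on $[0,T]$ because $|\bu(t)|$ lies in a compact subset of $(0,\infty)$ where $f_e$ is continuous; hence $\bv_e$, then $\bv_d=\bv+\bv_e$, and then $\bu_e,\bu_d$ themselves stay bounded on $[0,T]$. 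On the open set $\{|\bu_d-\bu_e|>\delta_T\}$ the vector field of \eqref{GBR_simple} is locally Lipschitz (the only singularity of $f_e$ sits at the diagonal), so a standard maximal-solution continuation argument upgrades the local Picard--Lindel\"of solution to a global one.

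I do not expect a genuine obstacle here: the closed damped-oscillator form \eqref{GBR_rel} of the $\bu$-equation with bounded coefficients and a confining, singular-at-the-origin potential makes the a priori bound essentially automatic. The only points that need a little care are the bookkeeping for the quadratic lower bound on $P$ (choosing $R$ and integrating cleanly) and checking that the bound on $|\bu|$ is expressed through $E$ so the Gronwall loop is not circular; everything else is routine.
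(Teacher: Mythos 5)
Your proposal is correct and follows essentially the same route as the paper: a Gronwall estimate on the same energy $E$, using boundedness of $f_d$, $\kappa^p$, $\kappa^c$ together with the coercivity of $P$ at $0$ and its quadratic growth at infinity to conclude $|\bu(t)|$ stays in a compact subset of $(0,\infty)$ on finite time intervals. Your write-up is in fact somewhat more careful than the paper's (explicit lower bound $P\ge 0$, the bound $|\bu|\le C(1+\sqrt E)$ avoiding circularity, and the final continuation argument for global existence, which the paper only sketches), but these are refinements of the same argument rather than a different approach.
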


\begin{proof}
First, in order to prove that $\bu(t)$ does not blow-up in a finite time, we consider the time derivative of the standard energy:
\[ \dot E(\bx) = -\nu|\bv|^2 + (1-\kappa^p(t))f_d(r)\bu\cdot\bv + \kappa^c(t) \bu^\perp \cdot \bv. \]
From Young's inequality, we have
\begin{equation*}
\begin{aligned}
&|(1-\kappa^p(t))f_d(r)\bu\cdot\bv| \leq \frac{\nu}{2}\left(|\bv|^2 + \frac{(1-\kappa^p(t))^2|f_d(r)|^2}{\nu^2}|\bu|^2\right) \quad\text{and}\\
&|\kappa^c(t) \bu^\perp \cdot \bv| \leq \frac{\nu}{2}\left(|\bv|^2 + \frac{|\kappa^c(t)|^2}{\nu^2}|\bu|^2\right).
\end{aligned}
\end{equation*}
Hence, we get an estimate of the derivative $\dot E$:
\[ \dot E(\bx) \leq \frac{1}{2\nu}((1-\kappa^p(t))^2|f_d(r)|^2 + |\kappa^c(t)|^2)|\bu|^2. \]

From the assumption \eqref{f_g} on $f$, $E(\bx)$ grows at least quadratically as $|\bu| \to \infty$. Hence, from the uniform boundedness of $f_d(r)$, $\kappa^p(t)$ and $\kappa^c(t)$, there exists a constant $C$ such that
\[ \dot E(\bx) \leq CE(\bx)\quad \text{for large } |\bu|. \]
This implies that $E(\bx)$ is bounded for any finite $t$, and so is $\bu(t)$. 
\end{proof}

From Lemma \ref{L_nonlinear_collision}, the interaction function $f_e(r)$ is bounded along time if the controls are bounded, and then the equation \eqref{GBR_simple} is well-posed. The remaining part of Theorem \ref{T2.1} is to show that $\bu$ is uniformly bounded when $\kappa^p(t)$ is a nonzero constant, and $\kappa^c(t)$ is a constant.

In order to get a non-increasing energy function, we use hypocoercivity theory \cite{beauchard2011large,villani2009hypocoercivity}. Using this method, one can construct a decaying function by adding \emph{lower-order terms} (see \cite{villani2009hypocoercivity}) to the standard energy.
 Following the arguments of \cite{beauchard2011large}, the lower-order terms are given by the inner products of relative position and velocity, such as
\[ \nu|\bv|^2,\quad \nu \bu \cdot \bv,\quad \kappa \bu^\perp \cdot \bv \quad\text{and}\quad \kappa^2|\bu|^2. \]

\begin{lemma}\label{L_nonlinear_stable}
Suppose that \eqref{eq_f}, \eqref{f_g} hold and the controls are constant, 
\[ \kappa^p(t) \equiv \kappa^p>0,\quad \kappa^c(t) \equiv \kappa^c \quad\text{and}\quad |\kappa^c| < \nu \sqrt{\kappa^p\gamma_m}. \]
Then, the relative position $\bu$ of \eqref{GBR_simple} is uniformly bounded along time.
\end{lemma}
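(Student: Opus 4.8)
The plan is to construct a modified Lyapunov function $\mathcal{E}(\bx)$ by adding hypocoercivity-type lower-order corrections to the standard energy $E(\bx)$, so that $\mathcal{E}$ is equivalent to $E$ up to constants (in particular still blows up iff $|\bu|=0$ and grows quadratically in $|\bu|$) while being non-increasing, or at least bounded, along trajectories. By Remark \ref{R2.1} we may normalize $\kappa^p = 1$, absorbing the factor into $f_d$; note this keeps $f_d$ bounded with $\lim_{r\to\infty} f_d(r) = \kappa^p\gamma_m =: \gamma_m'$ and preserves \eqref{eq_f}, \eqref{f_g} (with $r_p$ possibly relocated). With $\kappa^p\equiv 1$ the relative equation \eqref{GBR_rel} becomes the autonomous system $\ddot\bu + f(|\bu|)\bu + \nu\dot\bu = \kappa^c\bu^\perp$, and one computes $\dot E = -\nu|\bv|^2 + \kappa^c \bu^\perp\cdot\bv$, which is sign-indefinite. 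The whole point is that the rotational force $\kappa^c\bu^\perp$ pumps energy into $|\bv|$ only through the cross term, and a well-chosen correction will cancel the worst part of it.

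Concretely, I would set
\[
\mathcal{E}(\bx) := E(\bx) + \varepsilon\,\bu\cdot\bv + \delta\,\kappa^c\,\bu^\perp\cdot\bv + \tfrac{\lambda}{2}|\bu - \bu_c|^2
\]
for small parameters $\varepsilon,\delta > 0$ and appropriate constants $\lambda \ge 0$, $\bu_c$, or some subset of these terms; the guiding principle is that $\bu^\perp\cdot\bv$ has derivative $|\bu^\perp|^2$-free but produces $\kappa^c|\bu^\perp|^2 = \kappa^c|\bu|^2$-type and $-f(|\bu|)\bu^\perp\cdot\bu = 0$ contributions, so differentiating $\bu^\perp\cdot\bv$ yields $|\dot\bu^\perp\cdot\bv|$ plus $\kappa^c|\bu|^2$, allowing the problematic $\kappa^c\bu^\perp\cdot\bv$ term in $\dot E$ to be traded against a negative multiple of $|\bu|^2$. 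Meanwhile $\bu\cdot\bv$ has derivative $|\bv|^2 - f(|\bu|)|\bu|^2 - \nu\bu\cdot\bv + \kappa^c\bu^\perp\cdot\bu = |\bv|^2 - f(|\bu|)|\bu|^2 - \nu\bu\cdot\bv$, which supplies the coercive $-f(|\bu|)|\bu|^2$ restoring term (negative for $|\bu|$ large since $f\to\gamma_m'>0$). Balancing: the $|\bv|^2$ produced by $\varepsilon\,\tfrac{d}{dt}(\bu\cdot\bv)$ must be dominated by the $-\nu|\bv|^2$ from $\dot E$, so $\varepsilon < \nu$; the leftover indefinite cross terms $\bu\cdot\bv$, $\bu^\perp\cdot\bv$ get absorbed by Young's inequality into $|\bv|^2$ and $|\bu|^2$; and the sharp threshold $|\kappa^c| < \nu\sqrt{\kappa^p\gamma_m'} = \nu\sqrt{\gamma_m'}$ is exactly what is needed for the $|\bu|^2$-coefficient in $\dot{\mathcal{E}}$ to be made non-positive for $|\bu|$ large — this is where the rotational pumping rate $\kappa^c$ must be beaten by the product of friction $\nu$ and asymptotic stiffness $\gamma_m'$.

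The key steps in order: (i) reduce to $\kappa^p=1$ and record the explicit $\dot E$; (ii) compute $\tfrac{d}{dt}(\bu\cdot\bv)$ and $\tfrac{d}{dt}(\bu^\perp\cdot\bv)$ using $\dot\bu = \bv$, $\dot\bv = -f(|\bu|)\bu - \nu\bv + \kappa^c\bu^\perp$ and $\bu^\perp\cdot\bu = 0$; (iii) form $\dot{\mathcal{E}}$, collect it as a quadratic form in $(|\bv|, |\bu|)$-type quantities with coefficients depending on $\varepsilon,\delta$ and on $f(|\bu|)$; (iv) for $|\bu|$ in a bounded annulus $\{ r_- \le |\bu| \le R\}$ the dynamics is trivially nice (all forces bounded, $f_e$ bounded by Lemma \ref{L_nonlinear_collision}), so it suffices to handle $|\bu|$ large, where $f(|\bu|) \ge c > 0$; (v) choose $\varepsilon$ small (so $\varepsilon < \nu$ and the $|\bv|^2$ budget closes) and then $\delta$ small so the residual $|\bu|^2$-coefficient, which is roughly $\delta\kappa^c\cdot(\kappa^c) - \varepsilon c + (\text{Young slack})$, is negative — here the inequality $|\kappa^c|<\nu\sqrt{\gamma_m'}$ gives room because the bad term scales like $(\kappa^c)^2$ while the good one scales like $\nu^2\gamma_m'$ times order-one constants after optimizing $\varepsilon,\delta$; (vi) conclude $\dot{\mathcal{E}} \le 0$ for $|\bu| \ge R$, hence $\mathcal{E}$ cannot grow, hence (by equivalence $\mathcal{E}\asymp E$ plus the quadratic growth / blow-up-at-$0$ structure of $E$) $|\bu(t)|$ stays in a fixed compact subset of $(0,\infty)$.

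The main obstacle I anticipate is step (v): making all the Young-inequality constants explicit and verifying that the threshold $|\kappa^c| < \nu\sqrt{\kappa^p\gamma_m}$ — rather than merely \emph{some} smallness of $\kappa^c$ — is what the computation actually demands, which requires carefully optimizing over $\varepsilon$ and $\delta$ rather than making crude bounds, and handling the fact that $f(|\bu|)$ only converges to $\gamma_m'$ asymptotically (so one works with $f(|\bu|)\ge\gamma_m'-\eta$ for $|\bu|\ge R_\eta$ and lets $\eta\to 0$). A secondary subtlety is the lower bound $|\bu(t)| \ge r_- > 0$: the correction terms $\bu\cdot\bv$ etc. are harmless near $|\bu|=0$ since $E$ itself already blows up there and dominates them (as $\mathcal{E} \ge \tfrac12 E$ once $\varepsilon,\delta$ are small and one uses $|\bu\cdot\bv| \le \tfrac12|\bv|^2 + \tfrac12|\bu|^2$ with $|\bu|^2$ negligible against $P(\bu)\to\infty$), so boundedness of $\mathcal{E}$ forces $P(\bu)$ bounded, which by \eqref{f_g} forces $|\bu|$ bounded away from $0$ — but this equivalence must be stated cleanly to avoid circularity with the blow-up exclusion already given in Lemma \ref{L_nonlinear_collision}.
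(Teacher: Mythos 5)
Your overall mechanism (correct the standard energy $E$ by a multiple of $\bu^\perp\cdot\bv$ to neutralize the rotational pumping term) is the same as the paper's, but two points in your plan are genuine gaps rather than technicalities. First, the sign and size of the correction, and the place where the threshold enters, are off. The paper takes $L_\kappa(\bx)=E(\bx)-\frac{\kappa^c}{\nu}\,\bu^\perp\cdot\bv$, i.e.\ the coefficient of $\bu^\perp\cdot\bv$ has the \emph{opposite} sign to your $+\delta\kappa^c\,\bu^\perp\cdot\bv$ with $\delta>0$ and the fixed size $1/\nu$, not a small parameter; since $\frac{d}{dt}(\bu^\perp\cdot\bv)=-\nu\,\bu^\perp\cdot\bv+\kappa^c|\bu|^2$, this choice completes the square exactly and gives $\dot L_\kappa=-\nu\bigl|\bv-\frac{\kappa^c}{\nu}\bu^\perp\bigr|^2\le 0$ for \emph{all} states — no $\varepsilon\,\bu\cdot\bv$ term, no Young's inequality, no restriction to large $|\bu|$, and no condition on $\kappa^c$. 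The hypothesis $|\kappa^c|<\nu\sqrt{\kappa^p\gamma_m}$ is used only afterwards, to prove coercivity of $L_\kappa$ (via $|\frac{\kappa^c}{\nu}\bu^\perp\cdot\bv|\le\frac12(1-\varepsilon)|\bv|^2+\frac12(\gamma_m-2\varepsilon)|\bu|^2$ together with the quadratic growth of $P$ from \eqref{f_g}), so that monotonicity of $L_\kappa$ yields boundedness of $|\bu|^2+|\bv|^2$. In your scheme the produced term $\delta(\kappa^c)^2|\bu|^2$ has the wrong (positive) sign, so you must lean on $-\varepsilon f(|\bu|)|\bu|^2$ from the $\bu\cdot\bv$ correction and absorb the cross terms $(1-\delta\nu)\kappa^c\,\bu^\perp\cdot\bv-\varepsilon\nu\,\bu\cdot\bv$ by Young; making that quadratic form nonpositive (with $f\to\gamma_m$, $\delta\to0$) forces $(\kappa^c)^2\le 4(\nu-\varepsilon)\varepsilon\gamma_m-\varepsilon^2\nu^2$, whose maximum over $\varepsilon\in(0,\nu)$ is $\frac{4\nu^2\gamma_m^2}{4\gamma_m+\nu^2}<\nu^2\gamma_m$. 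So the route you describe in step (v) provably cannot reach the stated threshold, no matter how carefully you optimize $\varepsilon,\delta$; the threshold is sharp for coercivity, not for sign of the derivative.

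Second, even granting $\dot{\mathcal E}\le 0$ for $|\bu|\ge R$, your steps (iv) and (vi) do not close the argument. The sublevel sets of $\mathcal E$ are not contained in $\{|\bu|\ge R\}$: states with $|\bu|<R$ can carry arbitrarily large $|\bv|$, or $|\bu|$ close to $0$ where $P$ blows up, so ``$\mathcal E$ cannot grow'' does not follow from negativity of $\dot{\mathcal E}$ outside a ball in $\bu$ alone — the trajectory may spend arbitrarily long times (indeed, in the circumvention regime, all of its time on an orbit with constant $|\bu|=r_c$ possibly below $R$) in the region where you have no sign information, and Lemma \ref{L_nonlinear_collision} only provides finite-time control, not a uniform-in-time bound. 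This is exactly why the paper's choice, which makes the derivative nonpositive everywhere, is the right one: with $L_\kappa$ as above, monotonicity plus coercivity give the uniform upper bound at once, and the uniform lower bound on $|\bu|$ then follows as in your last paragraph (bounded $L_\kappa$ with bounded $\bu^\perp\cdot\bv$ forces $P(\bu)$ bounded, which by \eqref{f_g} keeps $|\bu|$ away from $0$) — that part of your proposal is fine once global monotonicity is in place.
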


\begin{proof}
Without loss of generality, we assume $\kappa^p = 1$.
Define a Lyapunov function $L_\kappa(\bx)$,
\[ L_\kappa(\bx):= E(\bx) - \frac{\kappa^c}{\nu} \bu^\perp \cdot \bv, \]
then we have a nonpositive time derivative:
\begin{equation*}
\begin{aligned}
\dot L_\kappa(\bx) &= \bv \cdot \dot{\bv} + f(|\bu|) \bu \cdot \bv - \frac{\kappa^c}{\nu}\bu^\perp \cdot \dot{\bv}\\
&= \bv \cdot ( -f(|\bu|) \bu -\nu\bv + \kappa^c \bu^\perp ) + f(|\bu|) \bu \cdot \bv - \frac{\kappa^c}{\nu}\bu^\perp \cdot ( -f(|\bu|) \bu -\nu\bv + \kappa^c \bu^\perp )\\
&= -\nu|\bv|^2 + 2\kappa^c \bu^\perp \cdot \bv -\frac{(\kappa^c)^2}{\nu}|\bu|^2\\
&= -\nu \left| \bv - \frac{\kappa^c}{\nu}\bu^\perp \right|^2 \leq 0.
\end{aligned}
\end{equation*}

Next, we need to show that $L_{\kappa}(\bx) \to \infty$ when $|\bx| \to \infty$. Suppose that $f(r)$ satisfies \eqref{eq_f}, \eqref{f_g} and $|\kappa^c| < \nu \sqrt{\gamma_m}$. Then, from Young's inequality, there exists a small $\varepsilon >0$ satisfying
\[ \left|\frac{\kappa^c}{\nu}\bu^\perp \cdot \bv \right| \leq \frac{1}{2}(1-\varepsilon)|\bv|^2 + \frac{1}{2}(\gamma_m-2\varepsilon)|\bu|^2. \]
On the other hand, from the growth condition \eqref{f_g}, there exists a constant $M>0$ such that 
\[ E(\bx) = \frac{1}{2}|\bv|^2 + \int_{r_p}^{|\bu|} sf(s)ds \geq \frac{1}{2}|\bv|^2 + \frac{1}{2}(\gamma_m - \varepsilon)|\bu|^2\quad \text{for}~ |\bu|>M.\]
This implies that $L_\kappa(\bx)$ grows quadratically as $|\bx| \to \infty$:
\[ L_\kappa(\bx) = E(\bx) - \frac{\kappa^c}{\nu}\bu^\perp \cdot \bv \geq \frac{\varepsilon}{2}(|\bv|^2 + |\bu|^2)\quad \text{for}~ |\bu|>M.\]
Therefore, since $L_\kappa(\bx)$ is nonincreasing, $|\bu|^2+|\bv|^2$ is uniformly bounded from above, and $|\bu|^2$ is also uniformly bounded below.

\end{proof}

\subsection{Asymptotic behaviors}

From the boundedness of the relative dynamics, we may use LaSalle's invariance principle to show the asymptotic convergence. The following two lemmas complete the stability result, Theorem \ref{T2.12}. 
 
\begin{lemma}\label{L_nonlinear_zero}
Suppose that \eqref{eq_f} and \eqref{f_g} hold. Let $\bu_d(t)$ and $\bu_e(t)$ be the solution of \eqref{GBR_simple} with controls $\kappa^p(t) \equiv 1$ and $\kappa^c(t) \equiv 0$ from nonsingular initial data $\bu_d^0 \neq \bu_e^0$. Then, $\bu(t)$ converges to a constant vector $\bu_*$ with $|\bu_*| = r_p$ where $r_p$ is defined in \eqref{eq_f}.
 Moreover, $\bu_d(t)$ and $\bu_e(t)$ converge to the linear motion \eqref{steady_0}.
\end{lemma}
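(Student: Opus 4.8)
Throughout we take $\kappa^p(t)\equiv 1$ and $\kappa^c(t)\equiv 0$, so that the relative position obeys the autonomous equation \eqref{GBR_rel}, i.e.
\[ \ddot{\bu} + f(|\bu|)\bu + \nu\dot{\bu} = 0, \]
and the standard energy $E=\tfrac12|\bv|^2+P(\bu)$ satisfies $\dot E=-\nu|\bv|^2\le 0$ (this is the general formula from the start of Section~\ref{sec:control} with $\kappa^p=1$, $\kappa^c=0$). By Theorem~\ref{T2.1} (or already because $E$ is non-increasing while $E\to\infty$ both as $|\bu|\to 0$ and as $|\bu|\to\infty$, using \eqref{f_g}), the relative trajectory $(\bu(t),\bv(t))$ stays, for all $t\ge 0$, in a fixed compact subset of $\{\bu\neq 0\}$. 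The plan is: (1) run LaSalle's invariance principle on this compact invariant region to get that $|\bu(t)|\to r_p$ and $\bv(t)\to 0$; (2) upgrade this to convergence of $\bu(t)$ to a single point $\bu_*$ on the circle $\{|\bu|=r_p\}$; (3) feed $\bu(t)\to\bu_*$ into the individual equations for $\bu_d$ and $\bu_e$ to recover the linear motion \eqref{steady_0}.

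\emph{Step 1 (LaSalle).} The $\omega$-limit set $\Omega$ of $(\bu(t),\bv(t))$ is non-empty, compact, connected and invariant, and $E$ is constant on it, equal to $\lim_{t\to\infty}E(\bu(t),\bv(t))$. On $\Omega$ the identity $\dot E\equiv 0$ forces $\bv\equiv 0$, hence $\ddot\bu\equiv 0$, hence $f(|\bu|)\bu\equiv 0$; since $|\bu|>0$ on the region, $\Omega\subset\{(\bu,0):f(|\bu|)=0\}$. Under \eqref{eq_f} with $f'(r_p)>0$, the radius $r_p$ is an isolated zero of $f$ and $\{|\bu|=r_p\}$ is the zero-circle of $f$ on which $P$ attains its minimal value $0$ (for the interaction forces considered, as in \eqref{fd_fe}, $r_p$ is the only positive zero of $f$, so no other zero-circle occurs). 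Using that $\Omega$ is connected — so it lies on a single zero-circle of $f$ — one concludes $|\bu(t)|\to r_p$ and $\bv(t)\to 0$.

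\emph{Steps 2–3 (convergence to a point, then linear motion).} To pin down a limit point on the circle, introduce the scalar angular momentum $\ell(t):=u_1v_2-u_2v_1$. A direct computation using the relative equation gives the exact law $\dot\ell=-\nu\,\ell$, hence $\ell(t)=\ell(0)e^{-\nu t}$. Writing $\bu(t)=|\bu(t)|(\cos\theta(t),\sin\theta(t))$ one has $\dot\theta=\ell/|\bu|^2$, which is integrable on $(0,\infty)$ because $|\bu(t)|\to r_p>0$; therefore $\theta(t)\to\theta_\infty$ and $\bu(t)\to\bu_*:=r_p(\cos\theta_\infty,\sin\theta_\infty)$. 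Linearising the radial part of \eqref{GBR_rel} about $|\bu|=r_p$ gives a damped scalar oscillator with restoring coefficient $r_pf'(r_p)>0$, so (together with the exponential decay of $\ell$) the convergence $\bu(t)\to\bu_*$, $\bv(t)\to 0$ is in fact exponential. Inserting $\bu(t)\to\bu_*$ into the driver and evader equations of \eqref{GBR_simple},
\[ \ddot\bu_d+\nu\dot\bu_d=-f_d(|\bu|)\bu,\qquad \ddot\bu_e+\nu\dot\bu_e=-f_e(|\bu|)\bu, \]
both right-hand sides converge exponentially to the common constant vector $\bF:=-f_d(|\bu_*|)\bu_*=-f_e(|\bu_*|)\bu_*$ (equal because $f(r_p)=0$). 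Writing $\bu_d=\tfrac{\bF}{\nu}t+\bw_d$ turns the driver equation into $\ddot\bw_d+\nu\dot\bw_d=h_d(t)$ with $h_d(t)\to 0$ exponentially; then $\dot\bw_d\to 0$ and $\bw_d$ converges to a constant $\bu_d^*$, i.e. $\bu_d(t)-\bar\bu_d(t)\to 0$, and likewise for $\bu_e$, with $\bar\bu_d,\bar\bu_e$ exactly \eqref{steady_0} for $\phi_0=\theta_\infty$ and $\bu_d^*=\bu_e^*+\bu_*$ (forced by $\bu_d-\bu_e=\bu\to\bu_*$).

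\emph{Main difficulty.} Steps~3 and the energy bookkeeping are routine. The genuinely delicate points are: in Step~1, excluding that $|\bu(t)|$ tends to a zero of $f$ other than $r_p$, where one must use the precise sign pattern \eqref{eq_f}, the nondegeneracy $f'(r_p)>0$, and connectedness of $\Omega$; and in Step~2, passing from ``$\bu$ approaches the equilibrium circle'' to ``$\bu$ converges'', for which the exact decay law $\dot\ell=-\nu\ell$ of the angular momentum is the crucial observation (a generic damped system need not converge to a single point of a whole circle of equilibria).
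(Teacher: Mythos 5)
Your proposal is correct and follows the same overall route as the paper: monotonicity of the standard energy ($\dot E=-\nu|\bv|^2$), boundedness of $(\bu,\bv)$ via Lemma \ref{L_nonlinear_stable}, LaSalle's invariance principle to locate the $\omega$-limit set in $\{\bv=0,\ f(|\bu|)=0\}$, and then feeding the limit $\bu_*$ into the individual driver and evader equations to recover \eqref{steady_0}. Where you go beyond the paper is Step 2: the paper passes directly from ``the $\omega$-limit set lies on the circle $|\bu|=r_p$'' to ``$\bu(t)$ converges to a single $\bu^*$'', invoking only the linearization with $f'(r_p)>0$, which by itself is delicate because the equilibria form a whole circle (the linearization is degenerate in the angular direction). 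Your observation that the angular momentum $\ell=\bu^\perp\cdot\bv$ obeys the exact law $\dot\ell=-\nu\ell$, so that $\dot\theta=\ell/|\bu|^2$ is integrable once $|\bu|\to r_p>0$, cleanly pins down the limit point; interestingly, this identity does appear in the paper, but only later, in the proof of Theorem \ref{T2.2}, not in this lemma. Your parenthetical caveat that \eqref{eq_f} by itself does not exclude further zeros of $f$ beyond $r_p$ is also fair --- the paper implicitly treats $r_p$ as the unique positive zero --- so on this point you match the paper's level of rigor while being more explicit about the assumption being used.
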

\begin{proof}
In Section \ref{sec:steady_0}, we observed that $\bu_d(t)$ and $\bu_e(t)$ converge to the linear motion \eqref{steady_0} if $\bu(t)$ is a constant vector $\bu_*$ with $|\bu_*| = r_p$.

Here, we need to prove that $\bu(t)$ converges to $\bu_*$ and the convergence is exponential. If these are true, then the $\bu$ terms in the equations of motion,
\begin{equation*}
\begin{aligned}
\dot {{\bv}}_d = -f_d(|\bu(t)|)\bu(t) - \nu \bv_d\quad\text{and}\quad
\dot {{\bv}}_e = -f_e(|\bu(t)|)\bu(t) - \nu \bv_e,
\end{aligned}
\end{equation*}
can be expressed $\bu_*$ and some error terms exponentially decaying to zero. This shows that $\bv_d(t)$ and $\bv_e(t)$ converges to $-f_d(r_p)\bu^*/\nu$ and $-f_e(r_p)\bu^*/\nu$, respectively. Hence, $\bu_d(t)$ and $\bu_e(t)$ converge to \eqref{steady_0}.

Since $\kappa^p(t) \equiv 1$ and $\kappa^c(t) \equiv 0$, we have $\dot E(\bx) = -\nu|\bv|^2 \leq 0$ and uniformly bounded $(\bu,\bv)$ along time (Lemma \ref{L_nonlinear_stable}). Hence, we may apply LaSalle's invariance principle: the $\omega$-limit set of $(\bu,\bv)$ is contained in the set 
\[ \{\bx ~:~ \dot E(\bx) = -\nu|\bv|^2 = 0\}.\]
Therefore, $\bu$ is globally attracted to some $\bu^* \in \mathbb R^2$ with $|\bu^*| = r_p$, which is the only solutions with $\bv(t) = 0$ in \eqref{GBR_rel}.

 Moreover, this convergence is exponential from \eqref{eq_f} since the potential $P(r)$ has a positive second derivative. In detail, the linearized equation of $\bu(t)$ around $\bar\bu(t)=\bu^*$ follows
 \[ \ddot{y} + \nu\dot{y} = -f'(r_p)(\bu^*\cdot y)\bu^* + o(\varepsilon), \]
for $\bu(t) = \bu^* + \varepsilon y$ with a small constant $\varepsilon \ll 1$. This shows that the convergence of $\bu(t)$ is exponential.
\end{proof}

\begin{lemma}\label{L_evader_asymp}
Suppose that \eqref{eq_f} and \eqref{f_g} hold and the controls $\kappa^p$ and $\kappa^c$ satisfy
\[ \kappa^p(t) \equiv 1 \quad\text{and}\quad \kappa^c(t) \equiv \kappa^c,\quad |\kappa^c| < \nu\sqrt{\gamma_m}. \]
Then, the solution $\bu$ of \eqref{GBR_simple} converges to a periodic motion with an angular velocity $\kappa^c/\nu$. Moreover, $\bu_e$ and $\bu_d$ asymptotically converge to \eqref{steady_1}.
\end{lemma}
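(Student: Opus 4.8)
The plan is to apply LaSalle's invariance principle to the closed relative equation \eqref{GBR_rel} with $\kappa^p\equiv 1$, using the Lyapunov function already built in Lemma \ref{L_nonlinear_stable}, and then to recover the motion of $\bu_d$ and $\bu_e$ from their individual equations in \eqref{GBR_simple}, which for a prescribed relative trajectory $\bu(t)$ are linear damped oscillators with a given external forcing.

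First, by Lemma \ref{L_nonlinear_stable} the orbit $(\bu,\bv)$ stays in a compact set bounded away from the origin, and $L_\kappa(\bx)=E(\bx)-\frac{\kappa^c}{\nu}\bu^\perp\cdot\bv$ is nonincreasing with $\dot L_\kappa=-\nu\big|\bv-\frac{\kappa^c}{\nu}\bu^\perp\big|^2$. Hence LaSalle's principle applies: the $\omega$-limit set is a nonempty compact connected invariant set contained in $\{\bv=\frac{\kappa^c}{\nu}\bu^\perp\}$. On an invariant subset of this set one has $\dot\bu=\bv=\frac{\kappa^c}{\nu}\bu^\perp$, so $|\bu|$ is constant and $\ddot\bu=-\frac{(\kappa^c)^2}{\nu^2}\bu$; inserting this together with $\nu\dot\bu=\kappa^c\bu^\perp$ into \eqref{GBR_rel} leaves $\big(f(|\bu|)-(\kappa^c/\nu)^2\big)\bu=0$, i.e.\ $|\bu|$ solves \eqref{r_c}. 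Since $\bu$ never vanishes and the $\omega$-limit set is connected, it is a single circular orbit of radius $r_c$ with $\bv=\frac{\kappa^c}{\nu}\bu^\perp$; this is one periodic trajectory on which the flow is a nontrivial rotation, so it has no proper nonempty closed invariant subset and therefore coincides with the $\omega$-limit set. This already yields that $\bu(t)$ converges to the rotational motion \eqref{steady_1u}.

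The delicate point — which I expect to be the main obstacle — is that the previous step only gives convergence to the orbit as a set, whereas \eqref{steady_1} and \eqref{steady_1u} carry a definite phase $\phi_1$; one must exclude an asymptotic phase drift, and the clean way is to upgrade the convergence to an exponential rate. Writing $\bu(t)$ in the frame co-rotating at angular speed $\kappa^c/\nu$ turns \eqref{GBR_rel} into a damped system whose set of equilibria is exactly the circle of radius $r_c$; linearizing about an equilibrium and decomposing the perturbation into its radial and tangential parts produces a characteristic polynomial of the shape $\lambda\big(\lambda^3+2\nu\lambda^2+(\nu^2+4(\kappa^c/\nu)^2+r_cf'(r_c))\lambda+\nu r_cf'(r_c)\big)$, which by the Routh--Hurwitz criterion has a simple root at $0$ (tangent to the circle of equilibria) and three roots with negative real part as soon as $f'(r_c)>0$; the circle is then a normally hyperbolic, transversally stable manifold of equilibria, and classical invariant-manifold theory forces the co-rotating variable to converge exponentially to a single point $r_c(\cos\phi_1,\sin\phi_1)$, which fixes the phase.

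With the phase pinned, the forcings $-f_e(|\bu|)\bu$ and $-f_d(|\bu|)\bu+\kappa^c\bu^\perp$ in the $\bu_e$- and $\bu_d$-equations are periodic up to an exponentially small error, and the elementary theory of $\ddot{\mathbf z}+\nu\dot{\mathbf z}=\mathbf g(t)$ with such $\mathbf g$ (the limiting periodic forcing having zero mean over a period) gives that $\bv_e,\bv_d$ converge to periodic velocities and $\bu_e$, $\bu_d=\bu_e+\bu$ converge to the trajectories \eqref{steady_1}, with the common center $\bu_c^*$ and the radii and phases $r_d,r_e,\phi_d,\phi_e$ recorded there, all determined by the initial data. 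The transversality hypothesis $f'(r_c)>0$ holds for the model nonlinearities \eqref{fd_fe}, and for $\kappa^c$ small it follows from $f'(r_p)>0$ in \eqref{eq_f} by continuity since then $r_c$ is close to $r_p$; it is the natural analogue, in the circumvention mode, of the condition $f'(r_p)>0$ used in the pursuit mode.
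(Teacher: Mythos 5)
Your proposal is correct and its skeleton is the same as the paper's: LaSalle's invariance principle applied to $L_\kappa(\bx)=E(\bx)-\frac{\kappa^c}{\nu}\bu^\perp\cdot\bv$ from Lemma \ref{L_nonlinear_stable}, identification of the invariant set inside $\{\nu\bv=\kappa^c\bu^\perp\}$ with circular motion of radius $r_c$ solving \eqref{r_c}, and then treatment of the $\bu_d$- and $\bu_e$-equations as damped linear systems with asymptotically periodic forcing to obtain \eqref{steady_1}. Where you genuinely depart from the paper is on the asymptotic-phase question: the paper simply asserts, "in the same argument as in Lemma \ref{L_nonlinear_zero}," that $\bu(t)$ converges (exponentially) to the specific solution $\bar\bu(t)$ of \eqref{steady_1u}, whereas LaSalle by itself only yields convergence to the circular orbit as a set. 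Your passage to the co-rotating frame, where the orbit becomes a circle of equilibria, the computation of the characteristic polynomial $\lambda\bigl(\lambda^3+2\nu\lambda^2+(\nu^2+4(\kappa^c/\nu)^2+r_cf'(r_c))\lambda+\nu r_cf'(r_c)\bigr)$ (which checks out), and the appeal to Routh--Hurwitz plus normal hyperbolicity rigorously pin down the phase $\phi_1$ and give an exponential rate — precisely the ingredient the paper leaves tacit. The price is the transversality condition $f'(r_c)>0$ (together with the attendant selection of $r_c$ when \eqref{r_c} has several roots), which is not among the hypotheses \eqref{eq_f}--\eqref{f_g}, $|\kappa^c|<\nu\sqrt{\gamma_m}$ of the lemma; you flag this honestly, it holds for the model forces \eqref{fd_fe} and for small $\kappa^c$ by continuity from $f'(r_p)>0$, and the paper's own exponential-convergence claims implicitly rely on the same kind of nondegeneracy, so your version makes explicit an assumption the paper uses silently rather than introducing a genuinely foreign restriction.
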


\begin{proof}
As in the same argument of Lemma \ref{L_nonlinear_zero}, we need to prove that $\bu$ converges exponentially to the periodic motion in Section \ref{sec:steady_1}. 

Lemma \ref{L_nonlinear_stable} shows that $\bu$ asymptotically approaches to the set
\[ \{ \bx ~|~ \dot L_\kappa(\bx) = 0 \}. \]
Then, from the LaSalle's principle, we need to find an invariant set satisfying $\nu \bv = \kappa^c \bu^\perp$. This corresponds to a circular motion around zero, hence, $\bu(t)$ converges to $\bar \bu(t)$ of \eqref{steady_1u}.

In turn, $\bv_d$ and $\bv_e$ converge to the solution of
\begin{equation*}
\begin{aligned}
\dot {{\bv}}_d = -f_d(r_c)\bar \bu + \kappa^c g(r_c) \bar \bu^\perp - \nu \bv_d\quad\text{and}\quad
\dot {{\bv}}_e = -f_d(r_c)\bar \bu - \nu \bv_e,
\end{aligned}
\end{equation*}
in the same argument as in Lemma \ref{L_nonlinear_zero}.
These are frictional motions with periodic external forces, which are discussed in Section \ref{sec:steady_1}. Hence, $\bar \bu_d$ and $\bar \bu_e$ eventually converge to the rotational motion of \eqref{steady_1}.

\end{proof}

\subsection{Controllability of the evader's position}\label{sec:3.3}

The idea of Theorem \ref{T2.2} is to use the asymptotic solutions \eqref{steady_0} and \eqref{steady_1} as in Figure \ref{fig:simus1}. 
This requires long enough final time $t_f$ since we need to wait after each change of $\kappa^c(t)$ to make the solution close to its asymptotic motion.

\begin{proof}[Proof of Theorem \ref{T2.2}]
We use Theorem \ref{T2.12} to guarantee that the solutions are bounded and converge to \eqref{steady_0} or \eqref{steady_1}.

 For any constant $\kappa^c$ satisfying $|\kappa^c| < \nu \sqrt{\gamma_m}$, we consider the set of off-bang-off controls
\begin{equation}\label{eq_kappa}
\kappa^p(t) = 1
\quad\text{and}\quad
\kappa^c(t) = \begin{cases} 0 ~\quad \text{if }~ t \in [0,t_1)\cup(t_2,t_f],\\
\kappa^c \quad \text{if }~ t \in [t_1,t_2], \end{cases}
\end{equation}
over the constants $t_1$, $t_2$, and $t_f$.
 Then, we may define the reachable set $\mathcal R$ over such controls,
\[ \mathcal R := \bigcup_{t_1,t_2,t_f} \mathcal R(t_1,t_2,t_f), \]
where $\mathcal R(t_1,t_2,t_f)$ is the final position $\{\bu_e(t_f)\}$ of the evader for \eqref{eq_kappa}. If $\mathcal R $ covers the whole space $\mathbb R^2$, then the evader's position is controllable.

First, we may fix $t_1$ and consider large enough $t_2$. Then, from Theorem \ref{T2.12}, the position of evader $\bu_e$ converges to a periodic orbit in \eqref{steady_1}. After turning off the control at $t_2$, $\bu_e(t)$ will tends to a uniform linear motion and goes to the infinity point. Note that the rotational component decays exponentially in the sense that
\[ \frac{d}{dt}(\bu^\perp\cdot\bv) = -\nu\bu^\perp\cdot\bv. \]
This implies that the trajectory of $\bu_e(t)$ draws a curve from $\bu_e(t_2)$ to $\infty$ with a bounded rotational angle, and finally tends to a straight line. Therefore, the union of the reachable set over $t_2 \in [t_1,\infty)$ and $t_f \in [t_2,\infty)$ covers the whole area outside of the stable orbit,
\[ \mathbb R^2 - \overline{B_{r_e}(\bu^*)} \subset \bigcup_{t_2,t_f} \mathcal R(t_1,t_2,t_f), \]
where $\overline{B_{r_e}(\bu^*)}$ is the closed ball with the center $\bu^*$ and radius $r_e$ from \eqref{steady_1}. 

Finally, note that the choice of $t_1$ determines the location $\bu^*$ of the stable orbit. Therefore, the reachable set $\mathcal R$ covers $\mathbb R^2$. 

\end{proof}

\vspace{1em}
\section{Numerical simulations on the guidance-by-repulsion model}
\label{sec:multi}

In this section, we simulate optimal control strategies on the guidance-by-repulsion model \eqref{GBR_general}. We perform numerical simulations with various initial positions and control costs, including the case with many drivers or many evaders. This is a purely computational study, however, it reveals key characters and patterns of the optimal controls.

In Section \ref{sec:sim_driving}-\ref{sec:Nsimul}, we study optimal control problems compared to the off-bang-off control, in terms of the running cost and the control time. In the simplified model \eqref{GBR_simple}, we already observed in Theorem \ref{T2.2} that there exists an off-bang-off control \eqref{control_kappa} which leads to the target $\bu_e(t_f) = \bu_f$. This corresponds to Definition \ref{D1.1} and the studies in \cite{king2012selfish,strombom2014solving} that we want to gather the sheep into a desired area such as a sheepcote.

Next, in Section \ref{sec:sim_herding}, we consider the stabilization on the final position of the herd, which is similar to \cite{burger2016controlling,lien2004shepherding}. The drivers wants to capture the evaders in a small region, so they will occupy a formation around the target point.

Here the nonlinearity of the model is the same as in \eqref{fd_fe},
\begin{equation*}
\begin{aligned}
f_d(r) = 1,\quad f_e(r) = \frac{1}{r^2},
\quad \psi_d(r) = \frac{1}{2r^4} \quad\text{and}\quad \psi_e(r) = 10\left(\frac{(0.1)^2}{r^2}-\frac{(0.1)^4}{r^4}\right),
\end{aligned}
\end{equation*}
where we use bounded pursuit and circumvention controls,
\[ 0 \leq \kappa_j^p(t) \leq 1 \quad\text{and}\quad -5 \leq \kappa_j^c(t) \leq 5,\quad\text{for }~ j=1,\cdots,M. \]

\subsection{The formulation of the optimal control problem}\label{sec:sim_driving}

In this part, we explain the formulation of the optimal control problem in the guidance-by-repulsion model. This optimal control problem is not an obvious task since the model is sensitive to the control functions and the final time.

We use the gradient descent method to optimize control functions, where the gradients are calculated by the adjoint system. 
Each simulation in this section usually takes several minutes, sometimes several tens of minutes on a typical laptop.

\subsubsection{Two cost functions for optimization}
Since the objective of the control is to get $\bu_{ei}(t_f)$ close to $\bu_f$, we consider the following three components of the cost functions:
\begin{itemize}
\item
\emph{The position error}: The distances between the final positions of the evaders $\bu_{ei}(t_f)$ and the target point $\bu_f$:
\[ \frac{1}{N} \sum_{i=1}^N |\bu_{ei}(t_f) - \bu_f|^2. \]
\item
\emph{The running cost}: The $L^2$-norms of the control functions $\kappa^c_j(t)$:
\[ \frac{1}{M} \sum_{j=1}^M \int_0^{t_f} |\kappa_j^c(t)|^2 dt. \]
 
\item
\emph{The control time}: The time taken to drive the evaders:
\[ t_f. \]
\end{itemize}
We define the total cost function $J$ with these three components,
\begin{equation}\label{OCP0} 
J = \frac{1}{N} \sum_{i=1}^N |\bu_{ei}(t_f) - \bu_f|^2 + \frac{\delta_1}{M} \sum_{j=1}^M \int_0^{t_f} |\kappa_j^c(t)|^2 dt + \delta_2 t_f,
\end{equation}
where $\delta_1$ and $\delta_2$ are regularization constants. The use of running cost $|\kappa^c_j(t)|^2$ suggests that the pursuit mode is considered to be a resting state while the circumvention maneuver consumes costs for rotations. 

In this setting, we will use two types of cost functions, $(\delta_1,\delta_2) = (0.001,0)$ and $(0.001,0.01)$. Along with their goals of the optimization, we may call the optimal solutions of them as the strategies \emph{minimizing the running cost}, and the strategies \emph{minimizing the control time}, respectively.

\subsubsection{The flexible final time}
The flexibility of the final time $t_f$ is important since Definition \ref{D1.1} does not require the final velocities to be zero.
The simulation with a fixed final time may lead to an unreasonable control functions since the asymptotic solutions \eqref{steady_0} and \eqref{steady_1} are not steady and continuously moving.

For example, in the simplified model \eqref{GBR_simple}, suppose that the initial positions are given by
 \[ \bu_d(0)=(-1,0),\quad\bu_e(0)=(0,0)\quad\text{and}\quad\bu_f=(1,0), \]
where the initial velocities are zero.
Note that the velocity of the evader is completely determined by the distances between the driver and evader. Hence, there exists an upper bound of the final time to drive it through the $x$-axis, less than $f_e(2)^{-1}$. This implies that we will get a strange trajectory with the final time larger than $f_e(2)^{-1}$. 

 In order to implement a flexible final time, we use a time-scaling map $T:[0,1] \to [0,t_f]$ with a positive bounded velocity $0<C_1<T'(s)<C_2$ for some constants $C_1$ and $C_2$. For example, suppose that we want to deal with the following dynamics
 \[ \ddot \bu_d + \nu\dot \bu_d =\kappa^p(t)f_d(|\bu|)\bu +\kappa^c(t)\bu^\perp,\quad t \in [0,t_f]. \]
Then, we interpret this equation into a new time variable $s \in [0,1]$ using $t = T(s)$,
\[ \frac{1}{T'(s)^2}\frac{d^2}{ds^2}\bu_d + \frac{1}{T'(s)}\nu\frac{d}{ds}\bu_d = \kappa^p(T(s))f_d(|\bu(T(s))|)\bu(T(s)) + \kappa^c(T(s))\bu(T(s))^\perp,\quad s\in[0,1]. \]
Therefore, the cost function $J$ is transformed to $\bar J(\bar \kappa^p(s),\bar \kappa^c(s),T(s))$, where we can find a minimizer $(\bar \kappa^p(s),\bar \kappa^c(s),T(s))$ with a fixed time interval $[0,1]$. Then, the optimal controls of the original system will be 
\[ \kappa^p(t) = \bar\kappa^p(T^{-1}(s))\quad\text{and}\quad \kappa^c(t) = \bar\kappa^c(T^{-1}(s)). \]

\subsubsection{The controllability and the initial guess}

We want to set a reasonable initial guess on the control functions since  the model \eqref{GBR_simple} is sensitive to the controls. For example, from Figure \ref{fig:simus1} with one driver and one evader, there may exist a control with one more rotation of the evader, which might be a local minimizer of the cost \eqref{OCP0}.
Hence, we need a proper initial guess in order to boost the optimization algorithm.

For the case of one driver and one evader, we can start with the off-bang-off controls in Theorem \ref{T2.2}. Among them, the constant controls are one of the special cases we may try first. The reachable set over the constant controls can not cover the whole domain, however, it works for a properly given target point, especially when the target is far from the initial positions.

While the controllability results holds in the simplified model \eqref{GBR_simple}, the concept of Definition \ref{D1.1} is difficult to be achieved by hand. For example, if we consider the off-bang-off controls, then we need to check all the evaders are in a desired area and did not escape away from the drivers.
For the problem with many drivers or many evaders, as an initial guess, we consider a piecewise constant control functions given by hand.

\subsection{Optimal control strategies with one driver and one evader}\label{sec:11simul}

Figure \ref{fig:simus2} shows a simulation with constant pursuit and circumvention controls, where we used the same initial data and target point as in Figure \ref{fig:simus1}.
This constant control is a reference solution we use as an initial guess, where it is calculated by Matlab fmincon solver.

\begin{figure}[ht]
  \centering
  {
    \includegraphics[width=0.7\textwidth]{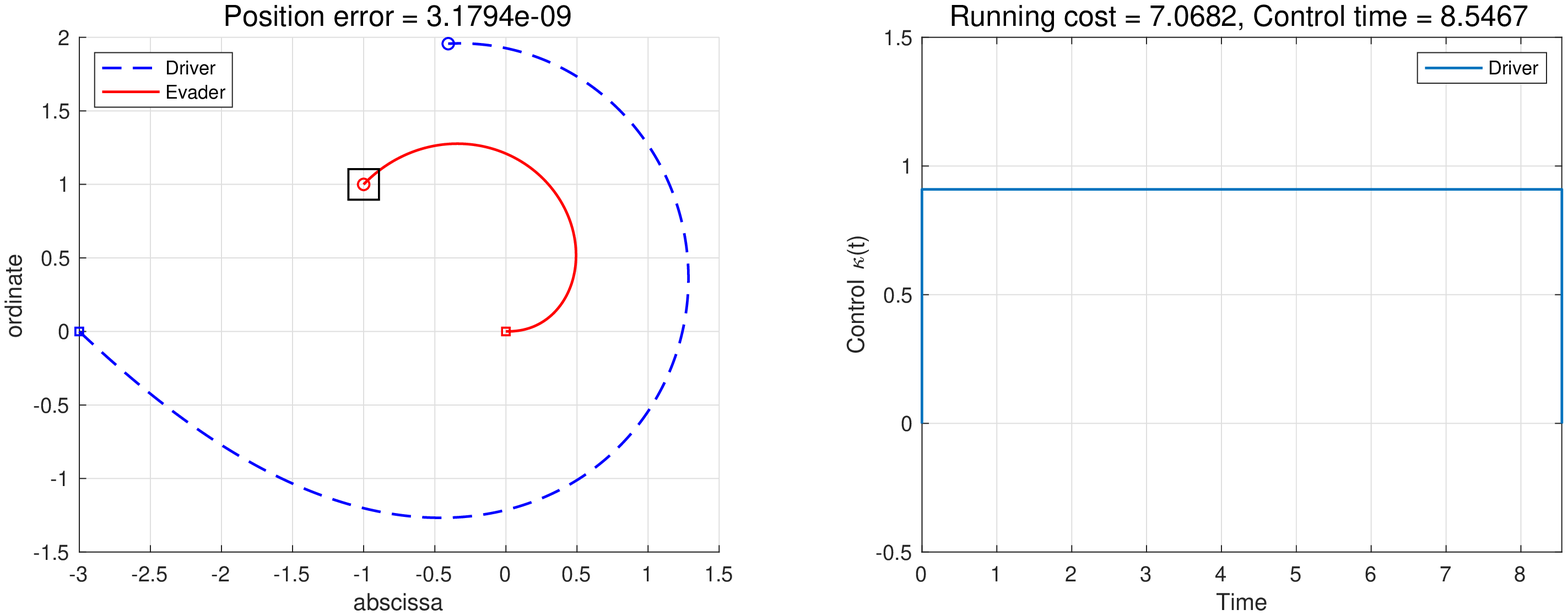}
  }
  \caption{ Diagrams for the constant control leading to $\bu_e(t_f) \simeq (-1,1)$ with $\kappa^p(t) \equiv 1$, $\kappa^c(t) \equiv 1.5662$ and $t_f = 5.1727$.}
  \label{fig:simus2}
\end{figure}

\subsubsection{Strategies minimizing running cost}

We first use the cost function \eqref{OCP0} with $(\delta_1,\delta_2) = (0.001,0)$:
\begin{equation}\label{OCP1}
J(\kappa^c(\cdot)) := |\bu_e(t_f) - \bu_f|^2 + \delta_1 \int_0^{t_f} |\kappa^c(t)|^2dt,\quad \delta_1 = 0.001,
\end{equation}
which deals with the running costs of the circumvention controls. Note that if the final error can be reduced to the order of $\delta_1$, then the simulation with \eqref{OCP1} produces optimal strategies which mainly minimizes $\|\kappa^c(\cdot)\|_{L^2(0,t_f)}$.

\begin{figure}[ht]
  \centering
  {
    \includegraphics[width=0.7\textwidth]{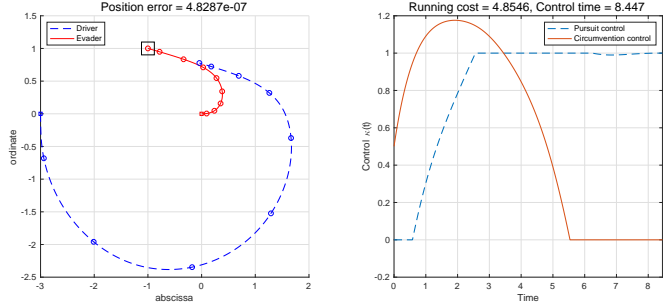}
  }
  \caption{Diagrams for the optimal control leading to $\bu_e(t_f) \simeq (-1,1)$ with the cost \eqref{OCP1}. The positions at the integer times and the final time are marked by circles.}
  \label{fig:simus3}
\end{figure}

Figure \ref{fig:simus3} shows the optimal control based on \eqref{OCP1}. The driver performs the circumvention motion in the beginning, and follows the pursuit dynamics near the final time. The circumvention control $\kappa^c(t)$ tends to zero after some time while the pursuit control $\kappa^p(t)$ goes to $1$.
This phenomena can be observed from various initial and final positions with zero initial velocities. 

Compared to the constant control in Figure \ref{fig:simus2}, the optimal control in Figure \ref{fig:simus3} reduces the running cost nearly one third (from $7.0682$ to $4.8546$) with a similar control time ($8.4470$, compared to the constant control case, $8.5467$). Note that it starts with the zero pursuit control to increase the effect of circumvention. This action reduces the running cost while the control time may increase.

 The position error is larger than the constant control, but still in an acceptable value with respect to the running cost. If we choose smaller value of $\delta_1$, we may reduce the position error to an arbitrarily small value without a big change of the running cost.

\subsubsection{Strategies minimizing control time}

Compared to the off-bang-off strategy in Figure \ref{fig:simus1}, it is natural to ask the optimal control minimizing the driving time. Since we assumed that the pursuit control is bounded, $0 \leq \kappa^p(t)\leq 1$, the speed of the driver is bounded so that there exists the minimum control time. For this, we use the cost function \eqref{OCP0} with the control time:
\begin{equation}\label{OCP2}
J(\kappa^c(\cdot)) := |\bu_e(t_f) - \bu_f|^2 + \delta_1 \int_0^{t_f} |\kappa^c(t)|^2dt + \delta_2 t_f,\quad (\delta_1,\delta_2) = (0.001,0.01).
\end{equation}

\begin{figure}[ht]
  \centering
  {
    \includegraphics[width=0.7\textwidth]{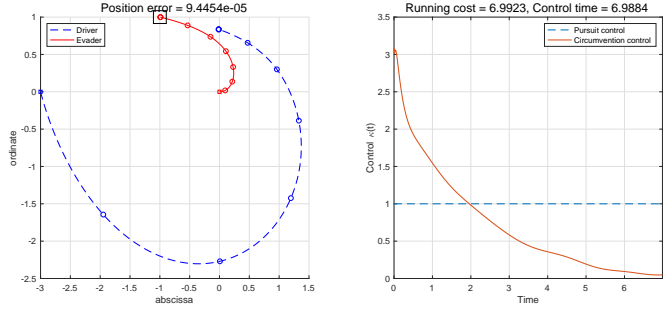}
  }
  \caption{Diagrams for the optimal control leading to $\bu_e(t_f) \simeq (-1,1)$ with the cost \eqref{OCP2}. The positions at the integer times and the final time are marked by circles.}
  \label{fig:simus4}
\end{figure}

Figure \ref{fig:simus4} describes the optimal trajectories with respect to \eqref{OCP2}, which has more attention to the control time. Compared to Figure \ref{fig:simus3}, the circumvention control function wants to have a stronger value in the beginning and decays fast to zero. Moreover, the pursuit control is constantly $1$ to minimize the control time.

 As a result, it reduces the control time from $8.4470$ to $6.9884$, nearly $13\%$ of the time. Meanwhile, the running cost changed to $6.9923$ from $4.8546$, which is quite significant though it is still less than the constant control, $7.0682$. 

 This strategy is reasonable in terms of the evader's speed, since the evader is faster when the driver and evader are closer (note that $r_p < r_c$ from \eqref{eq_f} and \eqref{r_c}). To occupy enough time near the pursuit dynamics, we can observe that the driver moves fast initially to place itself behind the target point. Hence, the amplitude and change of $\kappa^c(t)$ are much significant than in Figure \ref{fig:simus3}.

In summary, the optimal control functions share a basic strategy that the driver first rotates to get a right direction and then drive the evader with small values of $\kappa^c(t)$. The pursuit control can be a small value in the rotation process, but most of time it takes the maximal value $1$ to reduce the control time.

\subsection{Control strategies with many drivers or many evaders}\label{sec:Nsimul}

In this part, we simulate optimal control trajectories with more than one driver or one evader. In order to compare with the simulations in Section \ref{sec:11simul}, we focus on the case that the evaders are initially gathered and the drivers are ourside of the herd.

\subsubsection{Controls with two drivers and one evader}
We start with two drivers and one evader, where two drivers cooperate to steer the evader. 
Figure \ref{fig:simus5} shows two simulations with similar initial data and cost functions to Figure \ref{fig:simus3} and \ref{fig:simus4}. The drivers start with the initial positions $(-3,0.5)$ and $(-3,-0.5)$.

\begin{figure}[ht]
  \centering
  {
    \includegraphics[width=0.7\textwidth]{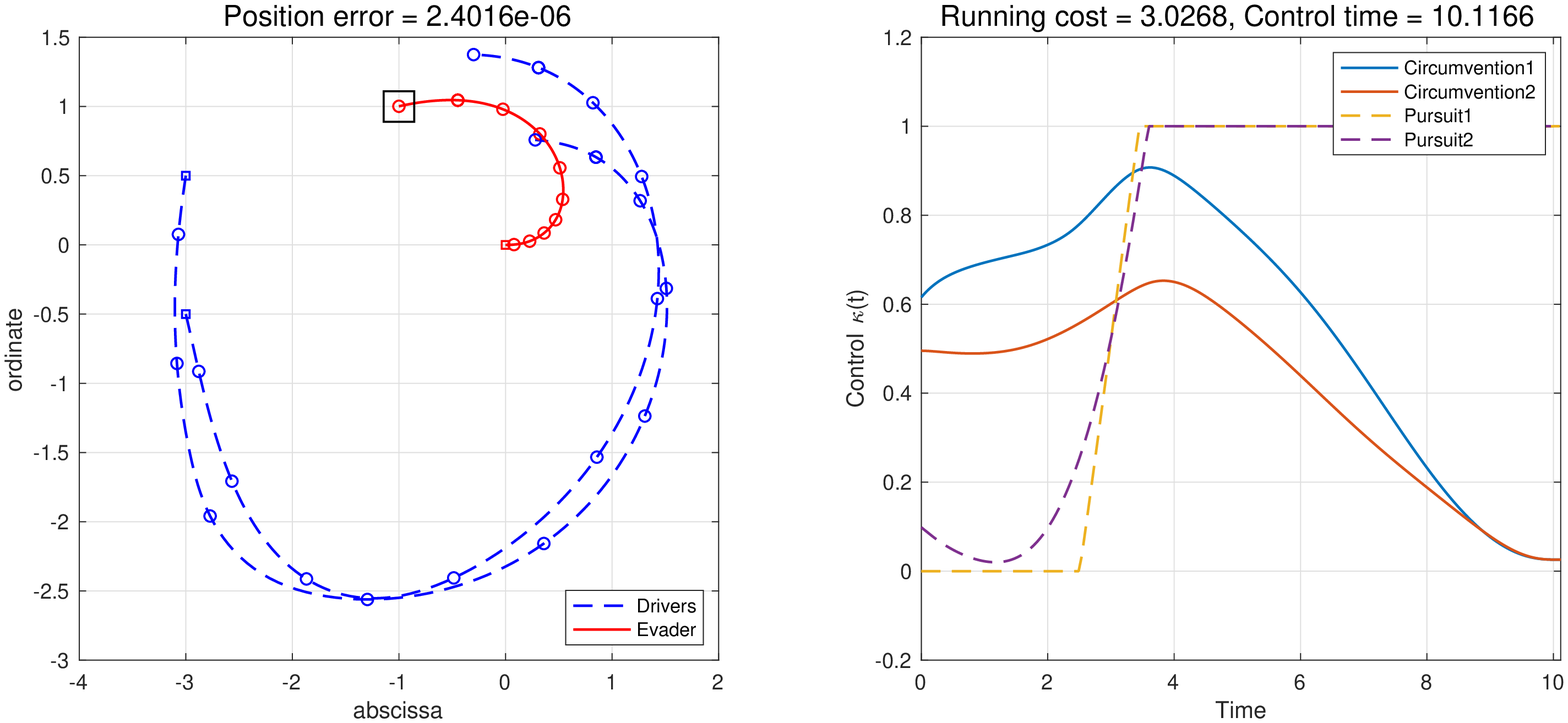}
  }
  \centering
  {
    \includegraphics[width=0.7\textwidth]{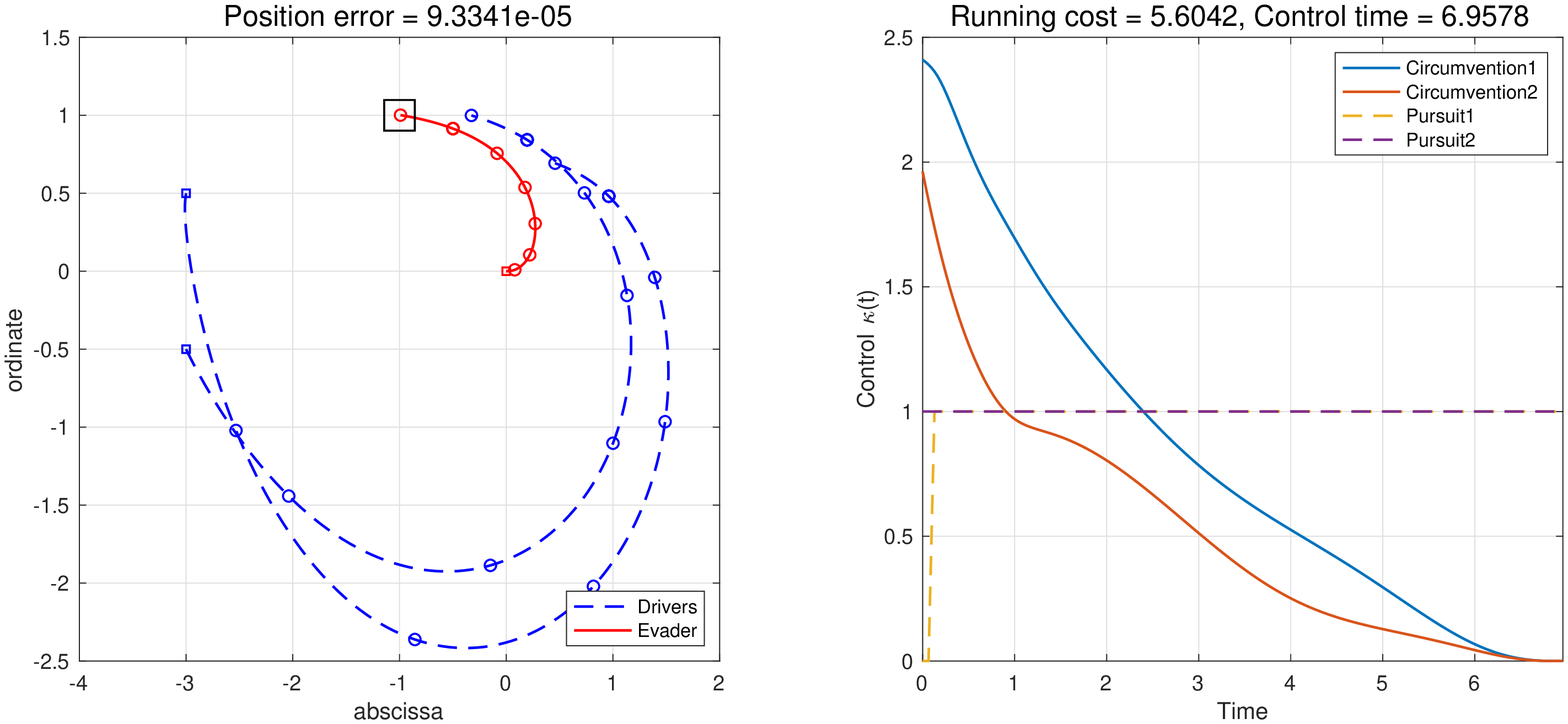}
  }
  \caption{Diagrams for the control leading to $\bu_{e1}(t_f) \simeq (-1,1)$ with two drivers. The above graph is for the minimum running cost, and the below one is for the minimal time in \eqref{OCP0} with $(\delta_1,\delta_2) = (0.001,0)$ and $(0.001,0.01)$, respectively.}
  \label{fig:simus5}
\end{figure}

For an efficient control, both drivers follow similar trajectories with the one-driver case, except for the separation of two drivers caused by the interaction $\psi_d$. Note that the pursuit and circumvention controls follow similar values to the simulations in Figure \ref{fig:simus3} and \ref{fig:simus4}. 

In order to minimize the control time, the drivers need strong circumvention control initially in the lower graph of Figure \ref{fig:simus5}. In this way, it minimizes the control time ($6.9578$) nearly one third, compared to the solution minimizing the running cost ($10.1166$). Instead, the running cost increases to $5.6042$ from $3.0268$ to rotate effectively in a short time. We can also observe that the circumvention controls are nearly zero in the final time. 

The control time is also reduced compared to the one-driver cases. This is more significant for the optimal strategy minimizing control time, since one of the drivers can approach the evader more closer.

On the other hand, Figure \ref{fig:simus6} starts with separated initial positions,
\[ \bu_{d1}(0) = (-2,-2),\quad \bu_{d2}(0) = (2,-2)\quad \text{and}\quad \bu_{e1} = (0,0),\]
while the desired target is $(-3,0)$, the opposite direction.
If initially the evader does not move toward the desired direction, in the one-driver case, the driver should perform a circumvention maneuver to rotate the evaders. However, in the multi-driver situation, they can cooperate to correct the direction.

\begin{figure}[]
  \centering
  {
    \includegraphics[width=0.7\textwidth]{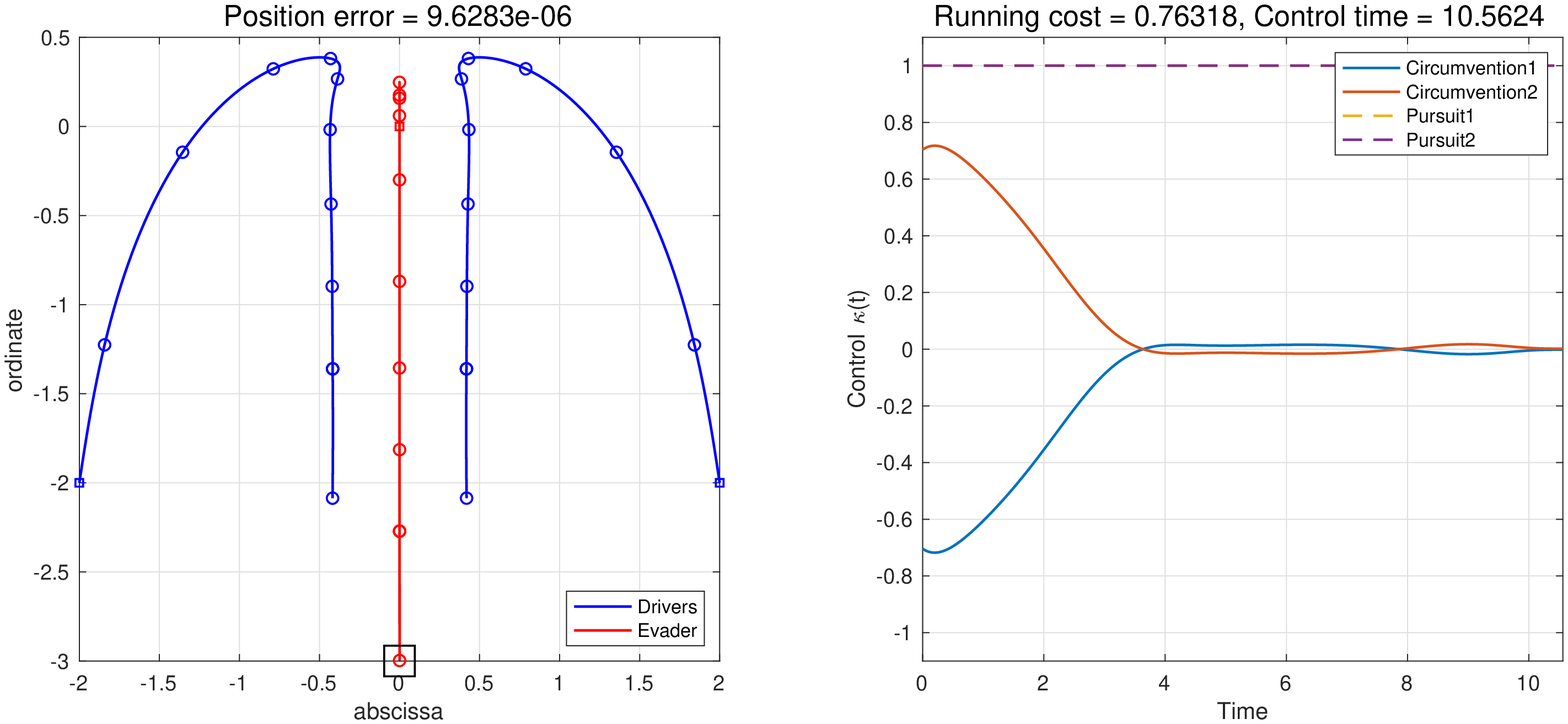}
  }
  \centering
  {
    \includegraphics[width=0.7\textwidth]{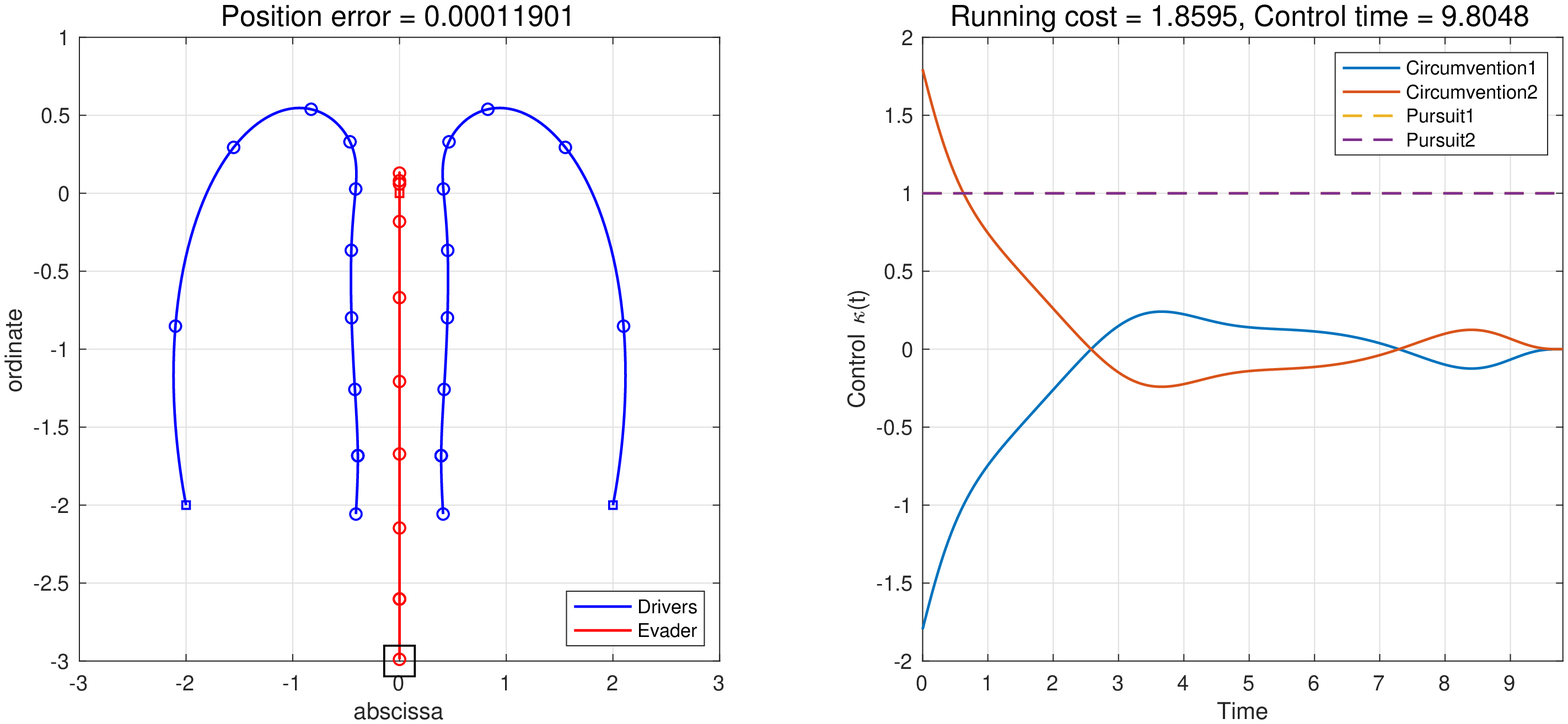}
  }
  \caption{Diagrams for the control leading to $\bu_{e1}(t_f) \simeq (0,-3)$ with two drivers. The above graph is for the minimum running cost, and the below one is for the minimal time in \eqref{OCP0} with $(\delta_1,\delta_2) = (0.001,0)$ and $(0.001,0.01)$, respectively.}
  \label{fig:simus6}
\end{figure}

In Figure \ref{fig:simus6}, the pursuit controls are all $1$, and the circumvention controls are symmetric to keep the position of the evader in the $y$-axis. We can observe a similar pattern here, except for the nonzero circumvention controls near the final time. In this way, the drivers are more closer to the evader so that the evader moves faster.

\subsubsection{Controls with two drivers and two evaders}

Similar strategies also appear in the multi-evader case, Figure \ref{fig:simus7}. This simulation starts with initial data
\[ \bu_{d1}(0) = (-4,3),\quad \bu_{d2}(0) = (-4,-3),\quad \bu_{e1}(0) = (-2,0.5),\quad \text{and}\quad \bu_{e1} = (-2,-0.5),\]
with zero initial velocities, and the desired target point $\bu_f$ is $(0,0)$.

 Since the final positions of the evaders cannot coincide with the target point $(0,0)$ exactly, the position error is bigger than the one-evader case. In order to reduce the restriction on the final position, we use different regularization constants, $(\delta_1,\delta_2) = (0.01,0)$ for minimizing running cost and $(\delta_1,\delta_2) = (0.01,0.1)$ for minimizing control time.

\begin{figure}[]
  \centering
  {
    \includegraphics[width=0.7\textwidth]{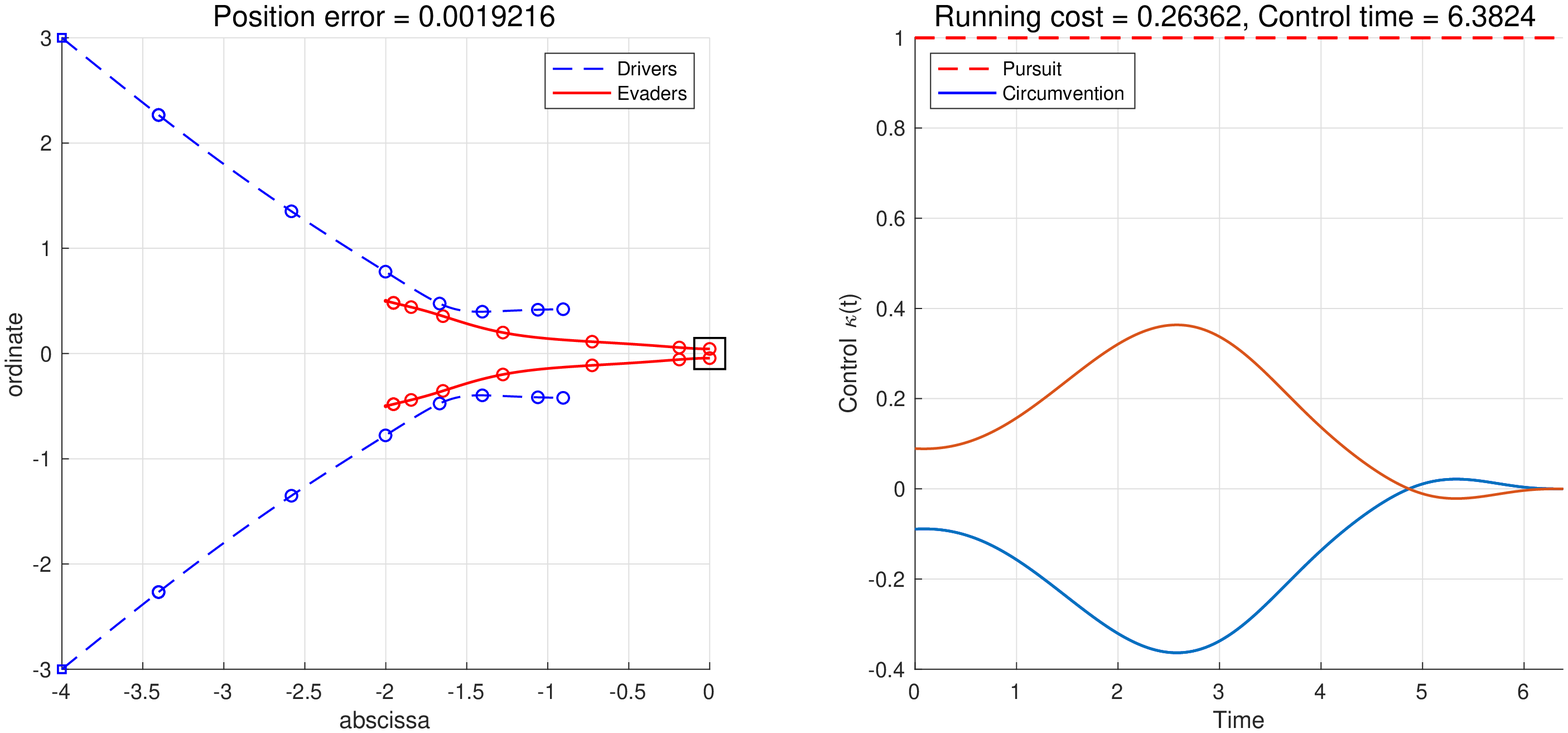}
  }
  \centering
  {
    \includegraphics[width=0.7\textwidth]{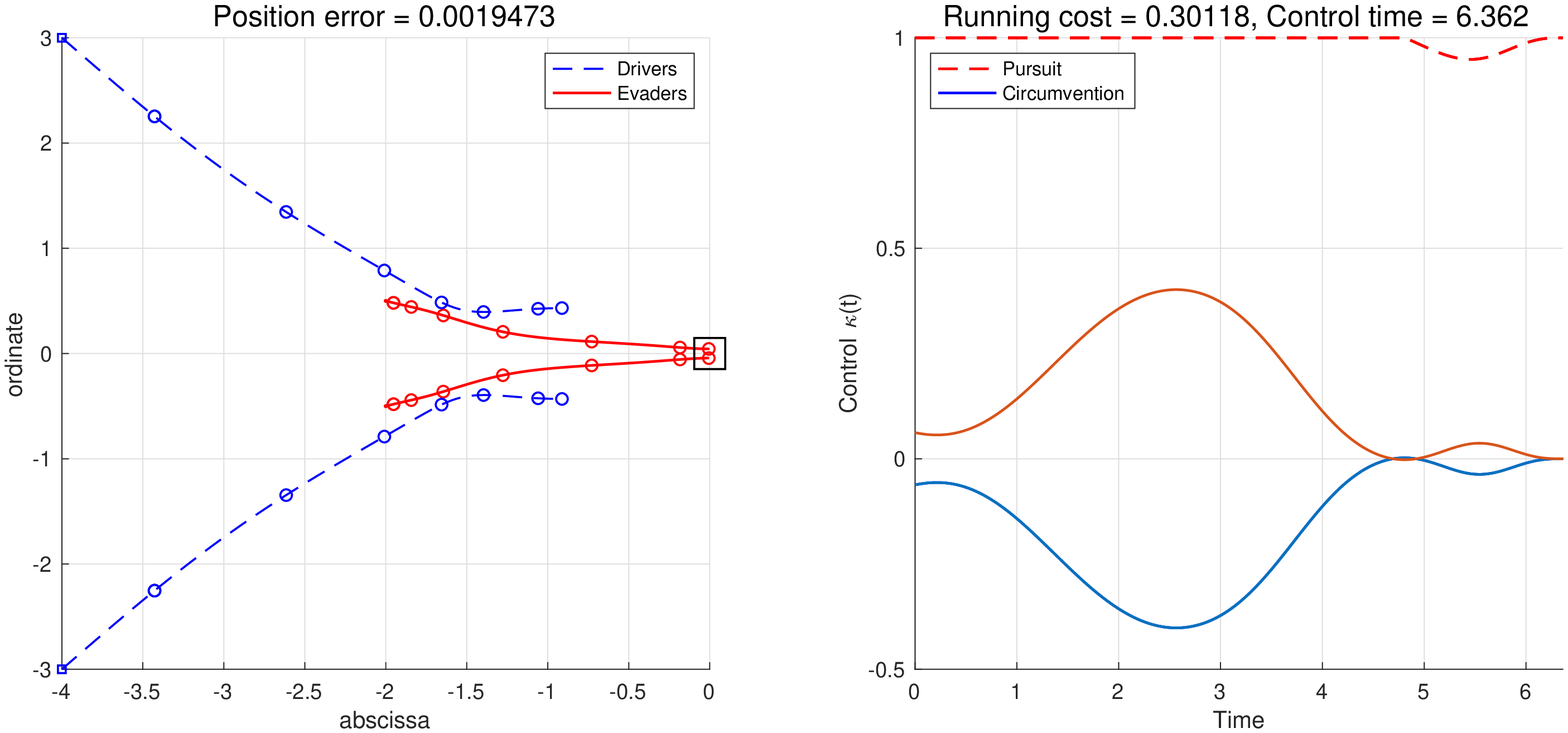}
  }
  \caption{Diagrams for the control leading to $\bu_{e1}(t_f), \bu_{e2}(t_f) \simeq (2,0)$ with two drivers. The above graph is for minimal running cost $(\delta_1,\delta_2) = (0.01,0)$, and the below one is for minimal time $(\delta_1,\delta_2) = (0.01,0.1)$ from the cost functions of \eqref{OCP0}.}
  \label{fig:simus7}
\end{figure}

If there are more than two evaders, the gathering of evaders is not a trivial task. In Figure \ref{fig:simus7}, the circumvention controls are nonzero when they pursuit the evader, since it may lead the evaders to separate in a vertical direction if the drivers are too close.

In this simulation, the minimal time strategy is not much different from the strategy minimizing the running cost, which is reasonable since they start with well-ordered initial positions in the horizontal direction. Near the final time, $t \geq 4.5$, there are slight change of control functions in the same philosophy as in Figure \ref{fig:simus6}; the drivers approach to the evaders a little more, with a slight increase of the position error and the running cost.

In the simulations with the multi-driver and multi-evader model, the main idea of optimal controls are similar with the one-driver and one-evader case. The differences mainly comes from the gathering of evaders, while choosing the escaping direction and driving the pursuit dynamics are similar as we may expect from Figure \ref{fig:multi}. 

\subsection{The optimal control problem for the stabilization of the herd}\label{sec:sim_herding}

Until now, we have considered the problem as a guidance, in the sense that we focus on how to drive the evaders into the target position while ignoring the final velocities of the evaders.

On the other hand, it is also natural to design control strategies to capture the evaders in a small region. If we have one or two drivers, then the final position of the evader cannot be stabilized with the pursuit motion of the drivers. Instead, the drivers need to rotate around the evader, so that it can present rodeo behavior to trap the evaders. When there are more than two drivers, they can make a surrounding formation to keep the evaders between them. 

When the number of drivers is large, we also need to restrict the drivers' positions. If we can steer the evaders only with few drivers, then the optimization algorithm makes redundant drivers move out of the evaders' sight.
In order to enhance the cooperation of drivers, we use the following cost function:

\begin{equation*}
\begin{aligned}
J =& \frac{1}{N} \sum_{i=1}^N \int_0^{t_f} |\bu_{ei}(t) - \bu_f|^2 dt + \delta_3 \int_0^{t_f} \frac{1}{N} \sum_{i=1}^N |\bv_{ei}(t)|^2 dt \\
&+ \delta_3 \int_0^{t_f} \frac{1}{M} \sum_{j=1}^M ( |\bu_{dj}(t) - \bu_f|^2+ |\bv_{dj}(t)|^2)dt+ \frac{\delta_1}{M} \sum_{j=1}^M \int_0^{t_f} |\kappa_j^c(t)|^2 dt,
\end{aligned}
\end{equation*}
According to the cost function, both the drivers and evaders need to be stabilized near the point $\bu_f$ with small velocities. In order to exclude redundant oscillations, we set the friction coefficients to be $10$.

Here we may use a fixed and large final time $t_f>0$ since we trap the evaders in an area. This optimization algorithm takes more time compared to the previous simulations, from tens of minutes to the order of hours. 

\subsubsection{Controls with one or two drivers}

Figure \ref{fig:simus8-0} and \ref{fig:simus8-1} shows two simulations with one driver and two drivers. The regularization coefficients are $\delta_1 = \delta_3 = 0.01$. To simulate the optimal controls, we set initial guess as constant controls which steer the evader in the direction toward $(1,1)$. Until the evader is driven near the desired point, the control functions show a similar pattern as in the previous simulations, Figure \ref{fig:simus6} and \ref{fig:simus7}.

\begin{figure}[]
  \centering
  {
    \includegraphics[width=0.7\textwidth]{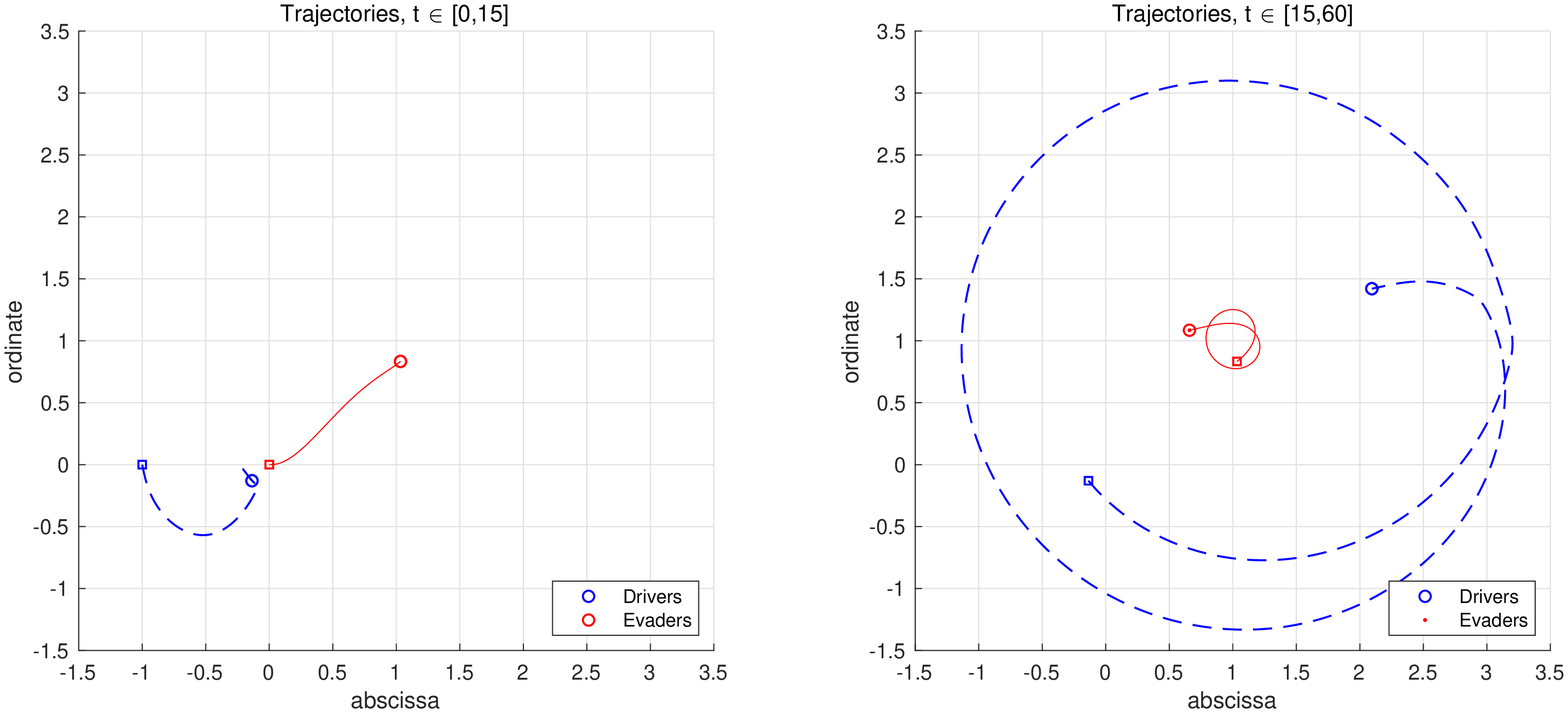}
  }
  \centering
  {
    \includegraphics[width=0.7\textwidth]{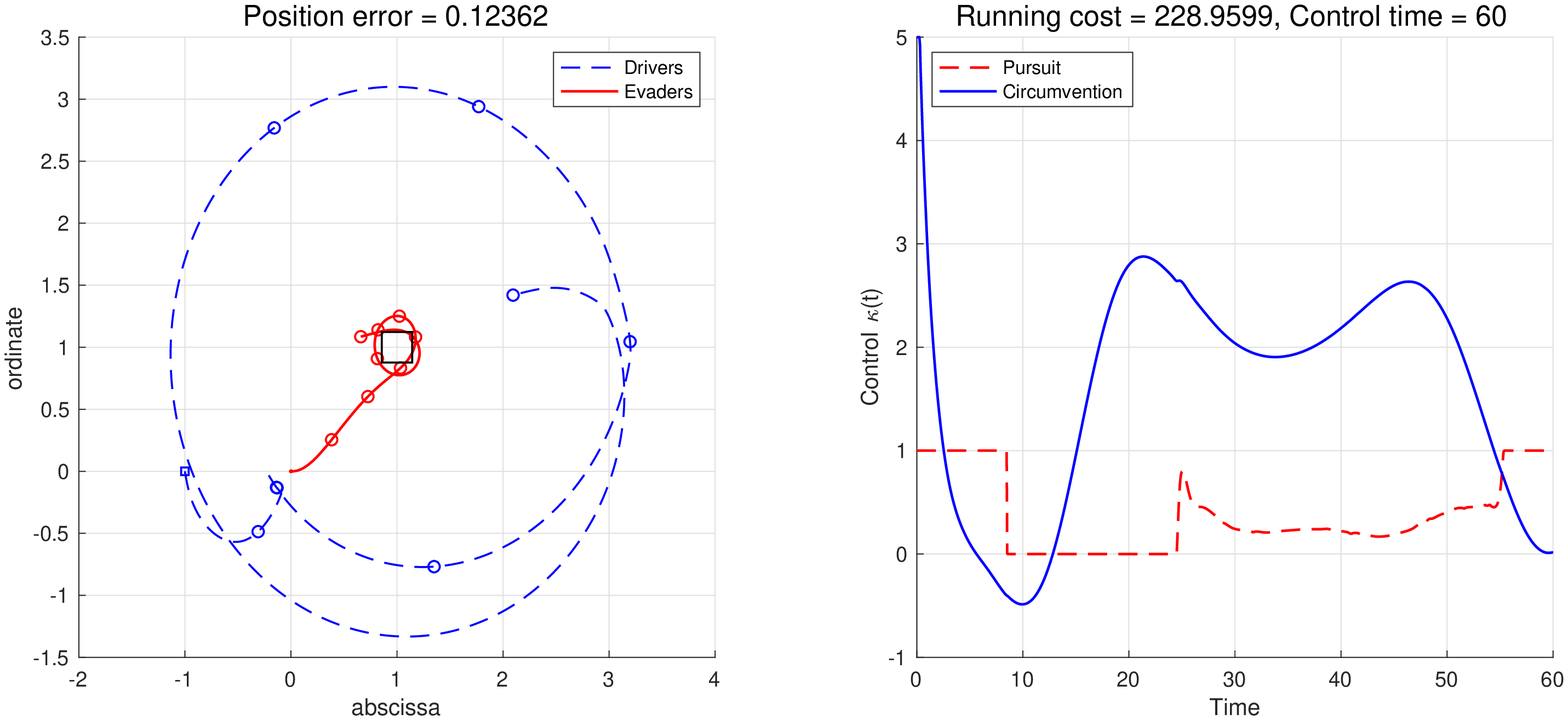}
  }
  \caption{Diagrams for the control leading to $(1,1)$ with one driver and one evader. The drivers' trajectories are presented in blue dashed lines where the square and circle marks show the previous and current positions, respectively. The two plots above present the trajectories of the drivers and evader over the subintervals of time $t \in [0,10]$ and $[10,60]$.}
  \label{fig:simus8-0}
\end{figure}

\begin{figure}[]
  \centering
  {
    \includegraphics[width=0.7\textwidth]{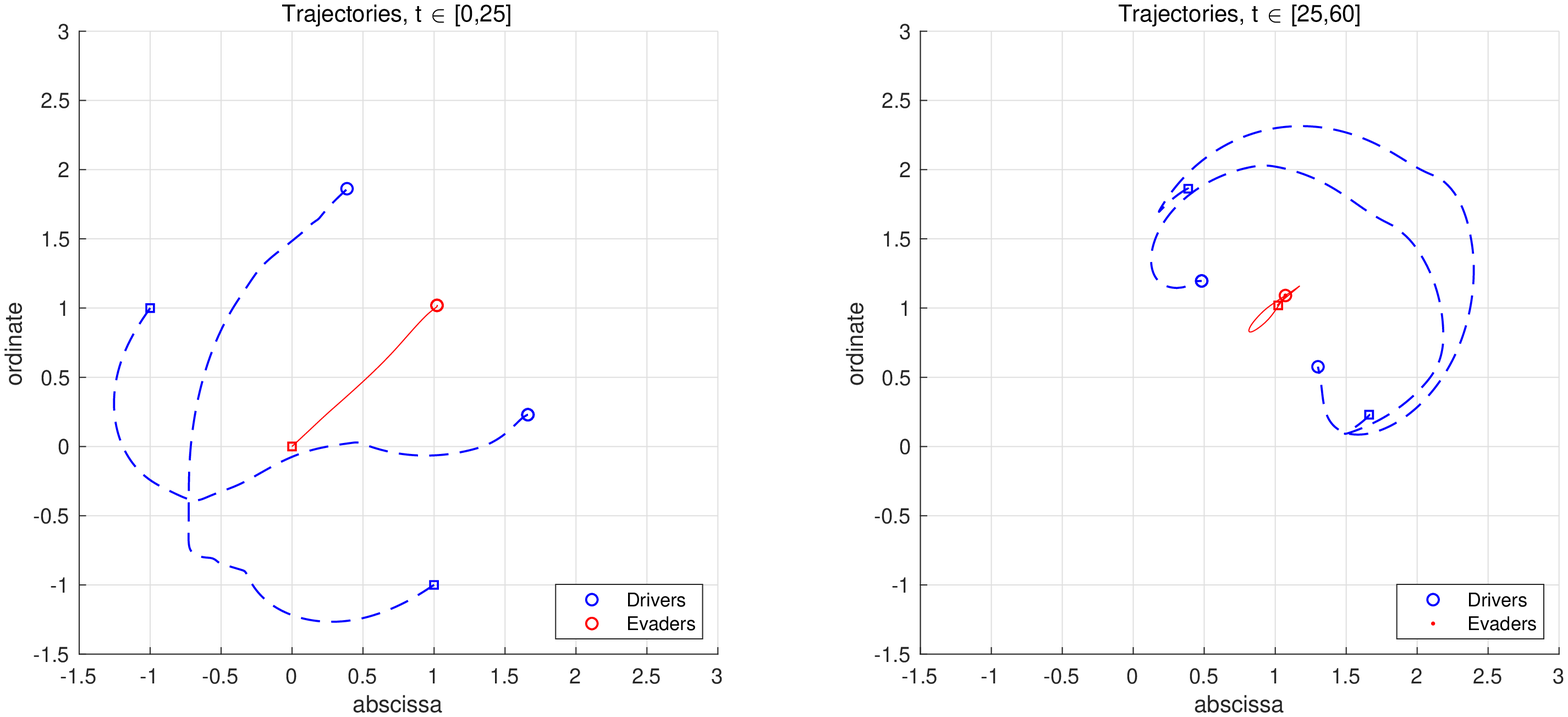}
  }
  \centering
  {
    \includegraphics[width=0.7\textwidth]{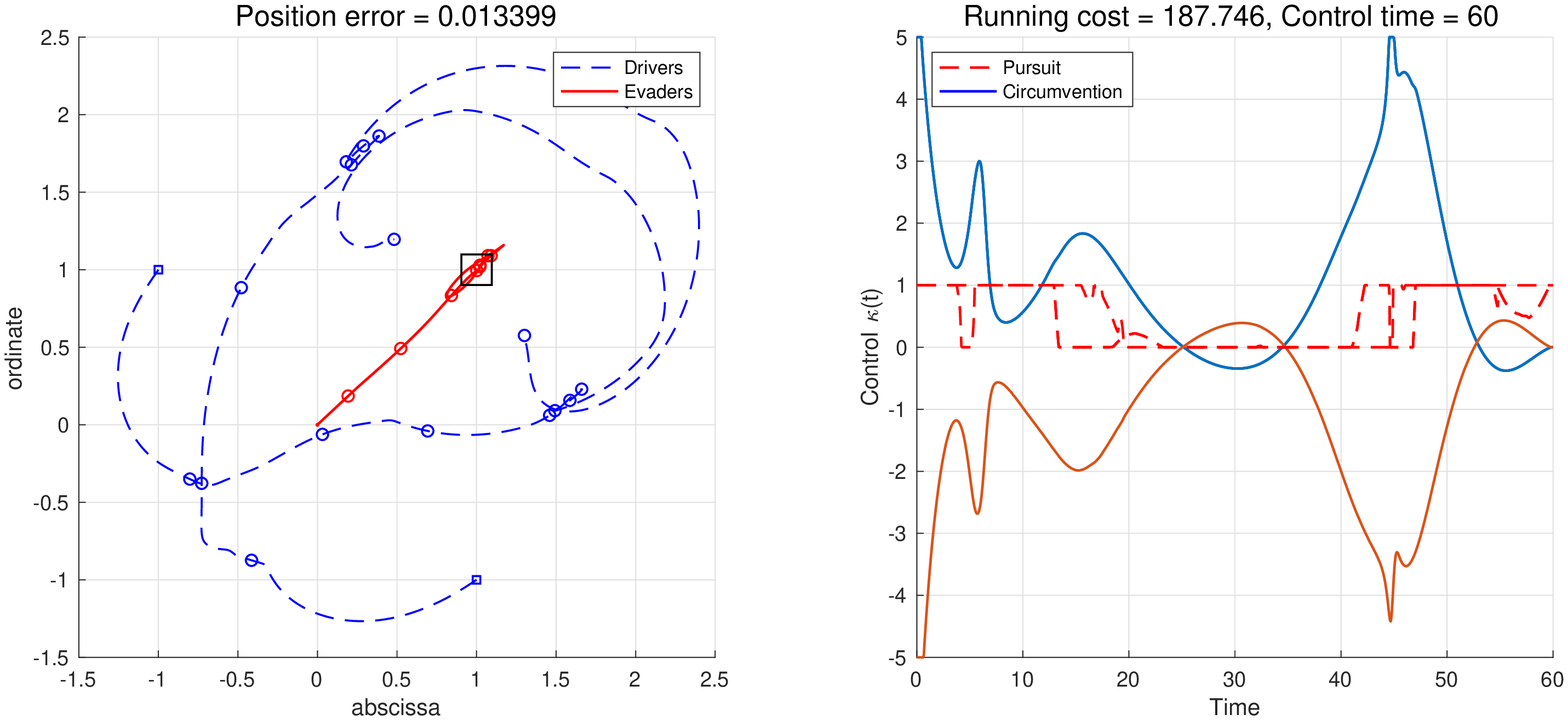}
  }
  \caption{Diagrams for the control leading to $(1,1)$ with 2 drivers and one evader. The two plots above present the trajectories of the drivers and evader over the subintervals of time $t \in [0,25]$ and $[25,60]$.}
  \label{fig:simus8-1}
\end{figure} 

 In those two simulations, Figure \ref{fig:simus8-0} and \ref{fig:simus8-1}, we can observe the rodeo behavior of the drivers after driving the evader near the target point. 
 
 In Figure \ref{fig:simus8-0}, the initial positions are $(-1,0)$ and $(0,0)$ for the driver and evader, respectively, and the target point is at $(1,1)$. In the beginning, the driver repels the evader until $t = 9$, and then it stops tracking until $t = 15$ while the evader moves to $(1,1)$.
 After that, the driver rotates around the evader, so that it can not escape from the target point. This simulation is done until $t = 60$, where the driver can approach the evader near the final time in terms of the cost function. The final position error is $0.12362$, which is in a reasonable distance from $(1.1)$. 

The same phenomenon happens in Figure \ref{fig:simus8-1}. The drivers starts from the positions $(-1,1)$ and $(1,-1)$ while the other initial data are the same as in Figure \ref{fig:simus8-0}. After the time $t = 25$, the evader is nearly stabilized until $t = 35$, but it slightly moves toward the northeast direction.
 When the evader tries to escape the desired point, the drivers rotate with strong circumvention controls to block the evader. The final position error is $0.01340$, which is small enough with respect to the cost function. Note that the running cost ($187.7$) dramatically decreased compared to that ($229.0$) of Figure \ref{fig:simus8-0} since two drivers can cooperate to trap the evader between them.

\subsubsection{Controls with 4 driver and one evader}

Figure \ref{fig:simus8} shows a simulation of 4 drivers with one evader. The initial positions of the drivers are given around the evader at $(0,0)$, namely, $(1,0)$, $(0,1)$, $(-1,0)$ and $(0,-1)$. As before, the drivers want to move the evader into the target point $(1,1)$. The regularization coefficients are $\delta_1 = \delta_3 = 0.1$ from now on, since it is easy to stabilize the evader with 4 drivers.

\begin{figure}[]
  \centering
  {
    \includegraphics[width=0.7\textwidth]{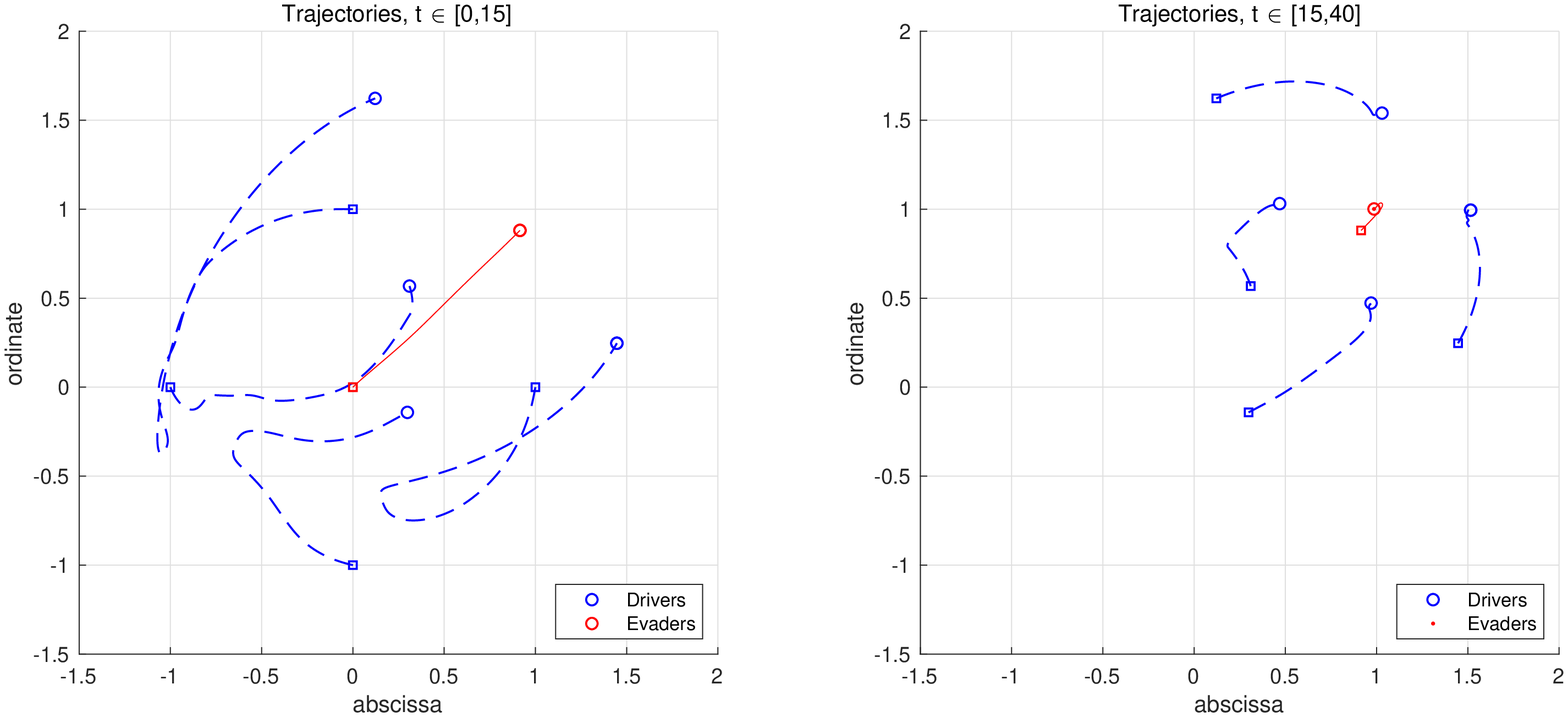}
  }
  \centering
  {
    \includegraphics[width=0.7\textwidth]{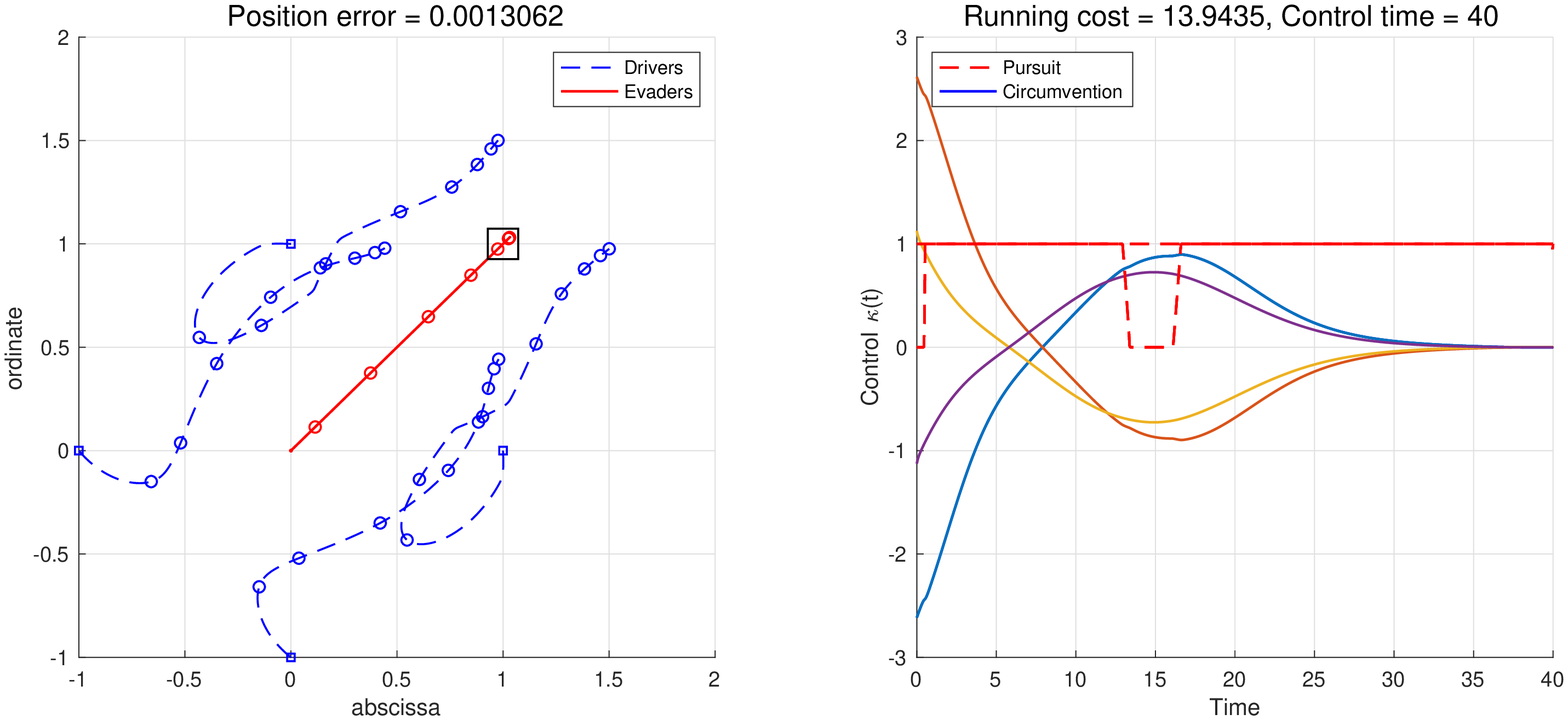}
  }
  \caption{Diagrams for the control leading to $(1,1)$ with 4 drivers and one evader. The two plots below present the trajectories of the drivers and evader over the subintervals of time $t \in [0,15]$ and $[15,40]$.}
  \label{fig:simus8}
\end{figure}

As we already saw with one or two drivers, the control trajectories are similar to Figure \ref{fig:simus6} and \ref{fig:simus7} until the evader is driven. Then, two forward drivers move faster than the evader after that time, and stabilize the evader's position with small circumvention controls. 
The final position error is small enough, $0.001306$, compared to the running cost, $13.94$. Compared to the rodeo behavior in Figure \ref{fig:simus8-0} and \ref{fig:simus8-1}, the running cost is significantly reduced since they don't need to rotate much to trap the evader.

\subsubsection{Controls with the flocking of evaders}

This control strategy works similar even in the case of many evaders. Figure \ref{fig:simus9} presents a simulation with 4 drivers and 16 evaders. The initial positions of evaders are set to be in $[-0.2,0.2]^2$ with a uniform distribution. 
The control functions are much complicated in this simulation, but from the trajectories, we can observe the same pattern as in Figure \ref{fig:simus8}. The circumvention controls are exaggerated in the driving process to rotate with a safe distance since the evaders may separate if they are too close. The same phenomena happen when there are more than $16$ evaders, but the rotational distance should be modified according to the interaction $\psi_d$.

\begin{figure}[]
  \centering
  {
    \includegraphics[width=0.7\textwidth]{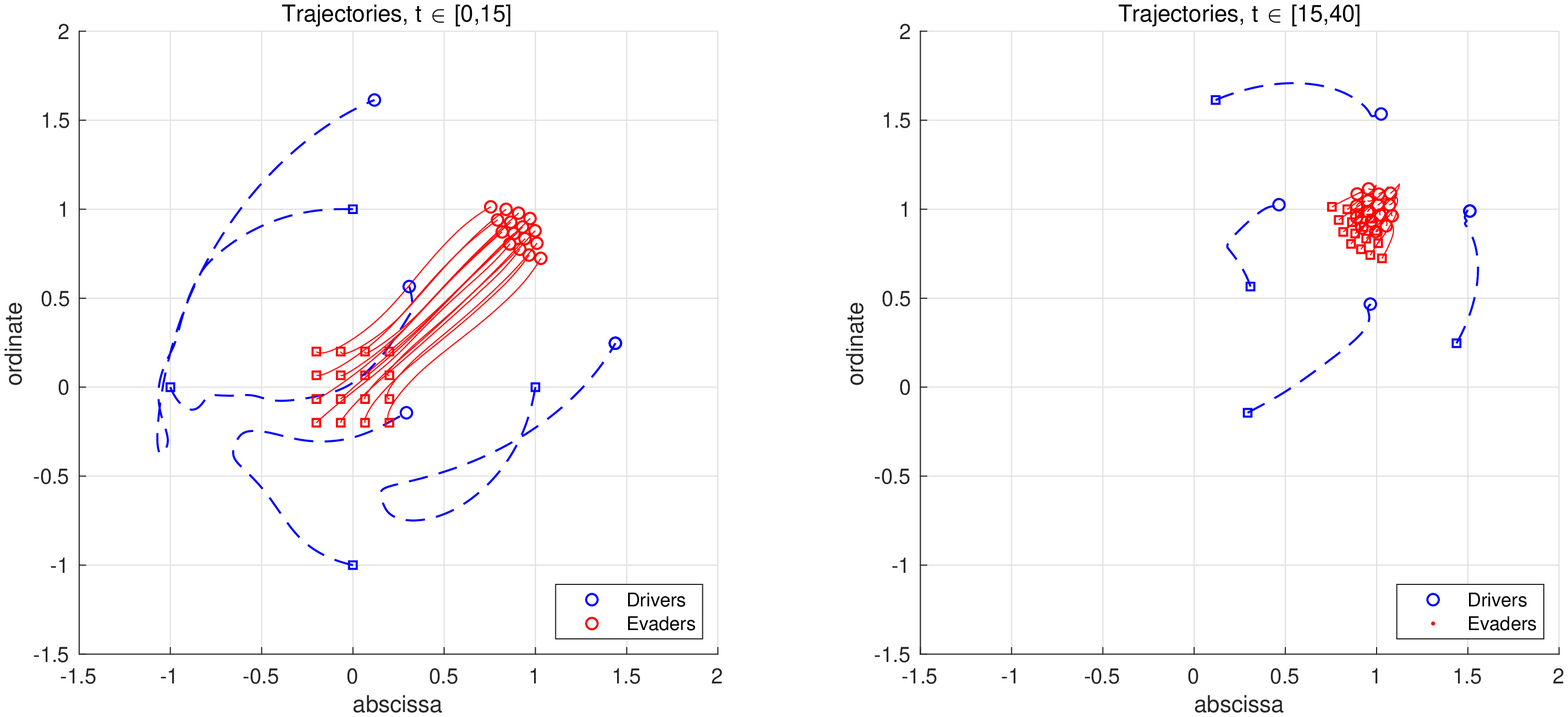}
  }
  \centering
  {
    \includegraphics[width=0.7\textwidth]{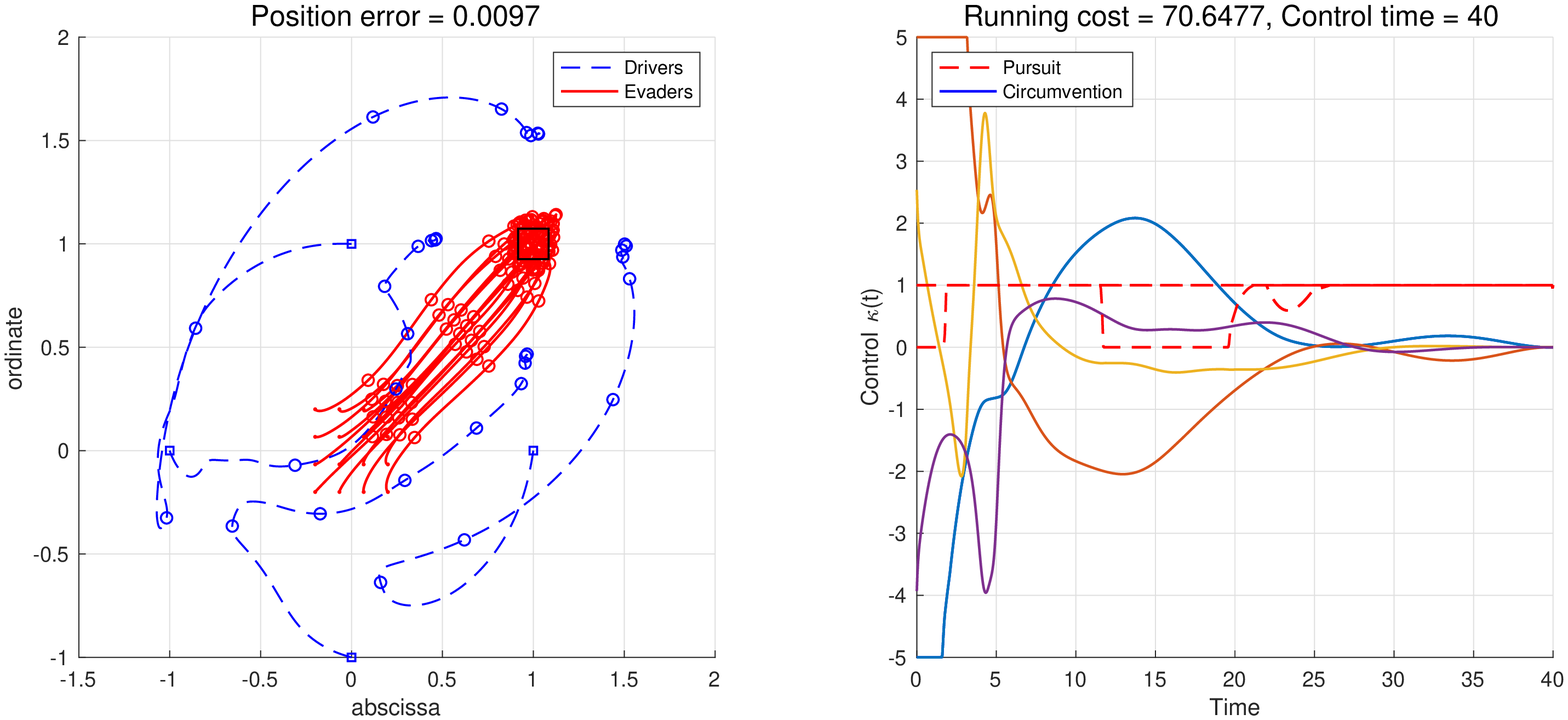}
  }
  \caption{Diagrams for the control leading to $(1,1)$ with 4 drivers and 16 evaders. The two plots below present the trajectories of the drivers and evader over the subintervals of time $t \in [0,15]$ and $[15,40]$.}
  \label{fig:simus9}
\end{figure}

\subsubsection{Controls without interactions among evaders}

The optimal control solution becomes much complicated when there is no flocking behavior of the evaders. Figure \ref{fig:simus10} present a simulation with the same problem as in Figure \ref{fig:simus9} without the interactions among evaders, 
\[ \psi_e(r) \equiv 0. \]
The circumvention controls are bigger than in Figure \ref{fig:simus9} to properly gather the evaders. Note that the trajectories are similar to the previous simulation, Figure \ref{fig:simus9}, where the difference comes from the diameter of evaders. We can observe that the pursuit controls are changing rapidly to make a distance between the drivers and evaders, which reduces the separation of evaders. 

\begin{figure}[]
  \centering
  {
    \includegraphics[width=0.7\textwidth]{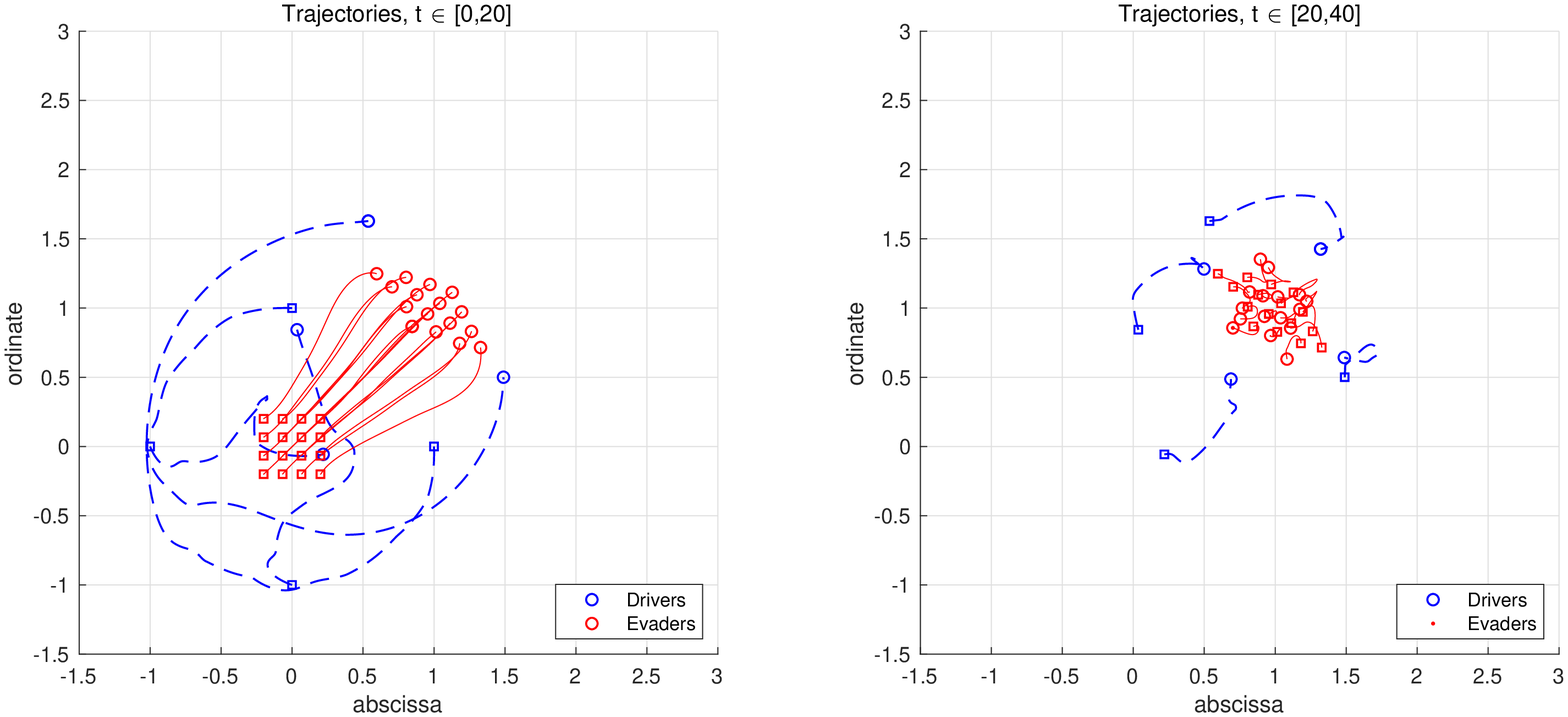}
  }
  \centering
  {
    \includegraphics[width=0.7\textwidth]{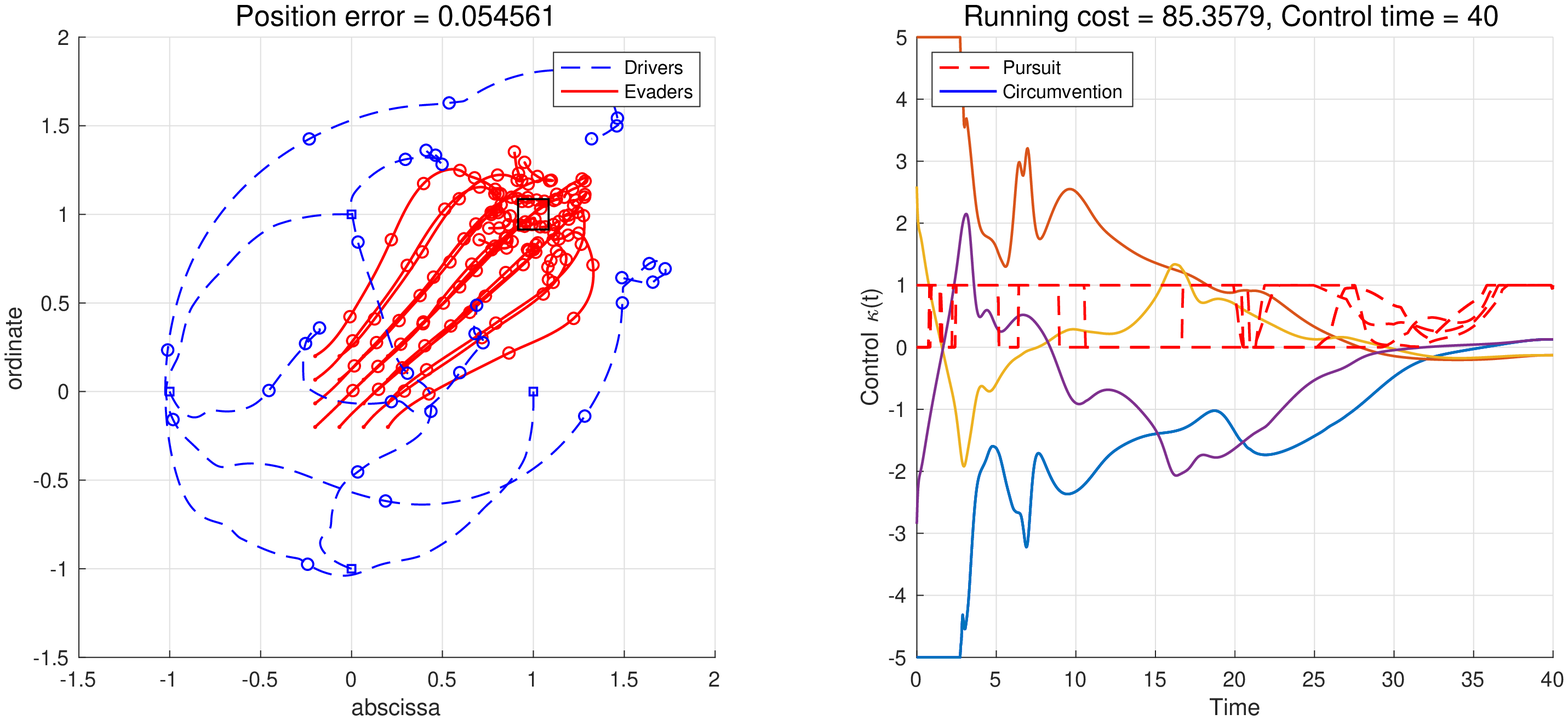}
  }
  \caption{Diagrams for the control leading to $(1,1)$ with 4 drivers and 16 evader without any interaction among evaders. The two plots below present the trajectories of the drivers and evader over the subintervals of time $t \in [0,20]$ and $[20,40]$.}
  \label{fig:simus10}
\end{figure}

\vspace{1em}
\section{Feedback control to herd the evaders}
\label{sec:feedback}

We observed that the optimal strategies in Section \ref{sec:multi} follow a common strategy, first rotate (to fix the escaping direction) and then drive (with small circumvention controls). This also coincides with the idea of off-bang-off control, shown in Theorem \ref{T2.2}. 
In this section, we will construct feedback control functions based on the idea of Section \ref{sec:multi}. we first consider how to steer the escaping direction of one evader using the circumvention control $\kappa^c_j$, and then discuss the multi-evader case with the pursuit control $\kappa^p_j$.

\subsection{Feedback control to steer the direction}

First, we assume the pursuit control $\kappa^p_j(t)$ is constantly $1$ and consider the one-evader case. In order to drive the evader in a proper direction, we need to construct a circumvention control from the current positions of the driver and evader with respect to the target point $\bu_f$.

Figure \ref{fig:feedback0} shows the simulations of \eqref{GBR_simple} on one evader with the feedback control:
\begin{equation}\label{feedback1}
\kappa^c_j(t) = -\bar\kappa^c\frac{(\bu_f - \bu_{ec})\cdot(\bu_{dj}-\bu_{ec})^\perp}{|\bu_f - \bu_{ec}|\cdot|\bu_{dj}-\bu_{ec}|},\quad \bar\kappa^c = 3,\quad j=1,2,\cdots.
\end{equation}
where $|\kappa^c_j(t)| \leq \bar\kappa^c$ and the control strategy is independent of $j$. The parameter $\bar\kappa^c = 3$ is chosen from the maximal value of the control function in Figure \ref{fig:simus4}. Since the circumvention control is bounded, the problem is well-posed from Theorem \ref{T2.1}. 

\begin{figure}[ht]
  \centering
  {
    \includegraphics[width=0.7\textwidth]{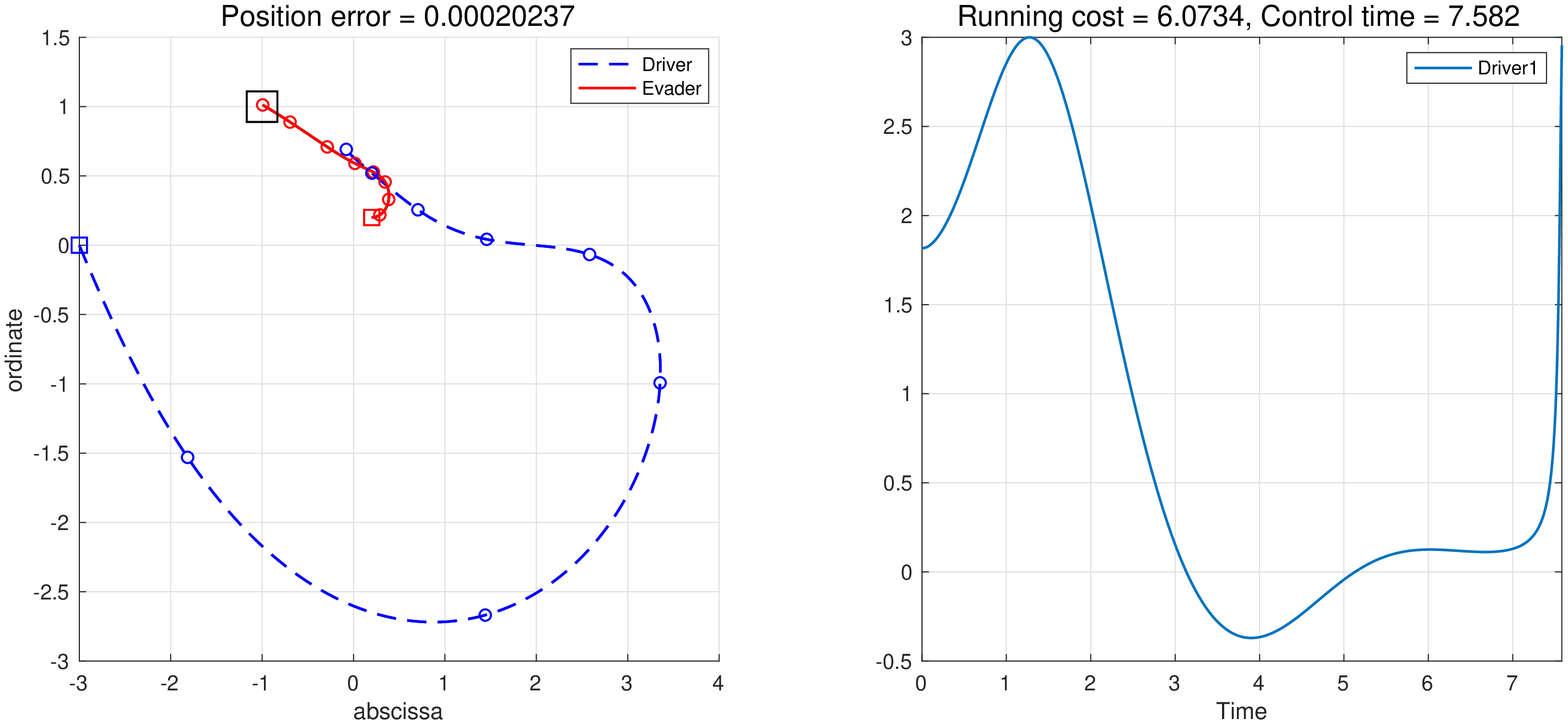}
  }
  \centering
  {
    \includegraphics[width=0.7\textwidth]{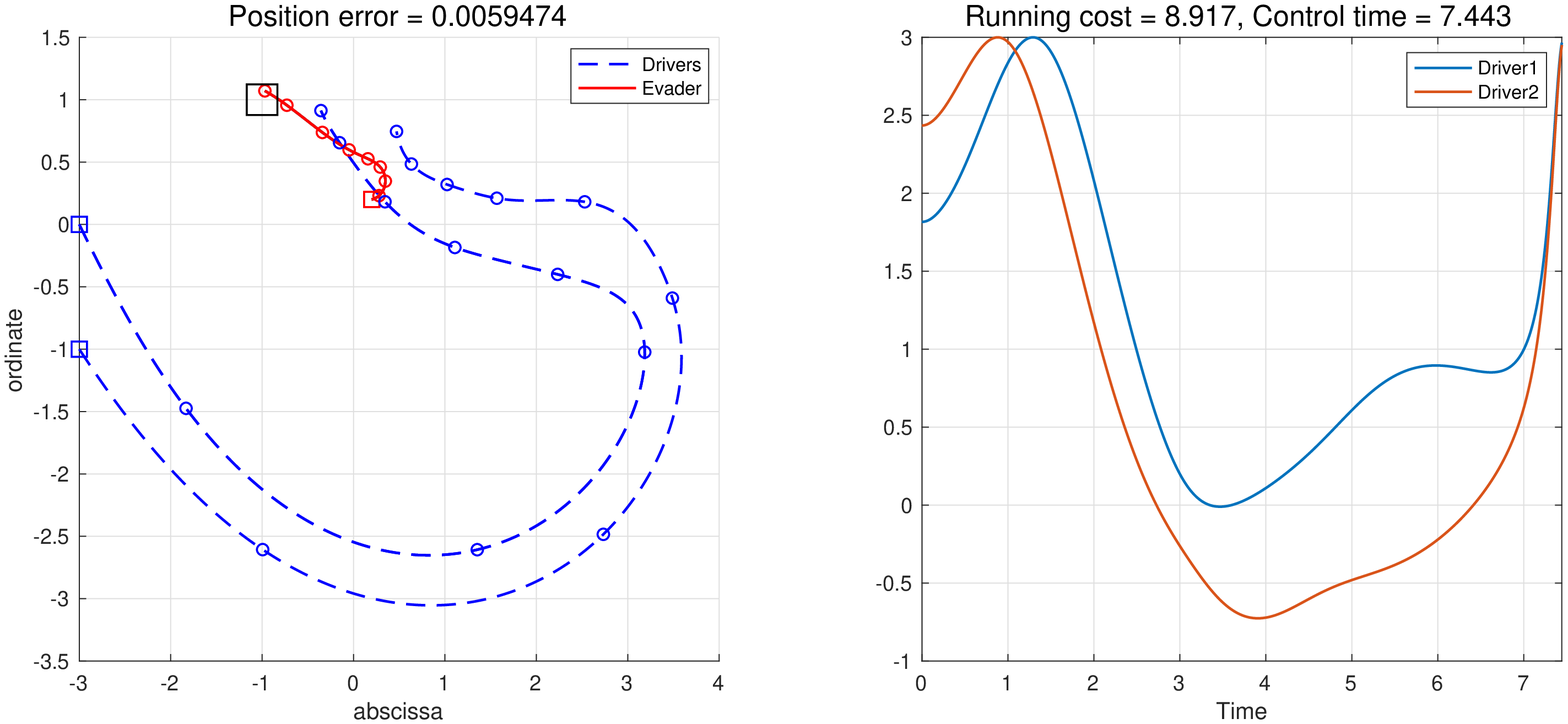}
  }
  \caption{Trajectories (left) and control functions (right) for the feedback control \eqref{feedback1} on the system \eqref{GBR_general} under the same conditions with Figure \ref{fig:simus3} and Figure \ref{fig:simus5}.}
  \label{fig:feedback0}
\end{figure}

The box marks represents the initial positions, where we have the same initial conditions and control costs as in Figure \ref{fig:simus3} and \ref{fig:simus5}. Note that the feedback control \eqref{feedback1} makes similar costs ($6.0734$) with the optimal solutions($4.8546$ and $6.9923$), where the control time ($7.582$) also lies between the control times ($8.447$ and $6.9884$) of Figure \ref{fig:simus3} and \ref{fig:simus5}. The same happens in the model with two drivers and one evader.

The feedback control $\kappa^c_j(t)$ is designed to increase the angle between $(\bu_f - \bu_{ec})$ and $(\bu_{dj} - \bu_{ec})$ so that the driver gets the position behind the target. For example, if the $j$-th driver $\bu_{dj}$ is on the left side of the target $\bu_f$ (from $\bu_{ec}$), then the control $\kappa^c_j(t)$ is positive. Then, the driver rotates to the counterclockwise direction and then the driver can push the evaders to the target $\bu_f$. 
The strength $\bar \kappa^c$ determines how far the driver should rotate away from the center $\bu_{ec}$. After the driving time $t = 7.582$, the vector $(\bu_f - \bu_{ec})$ will direct nearly the opposite direction as $(\bu_{dj} - \bu_{ec})$, so that the driver will make a big rotation (according to $\bar\kappa^c$) to steer the evader again.

\subsection{Feedback control to gather evaders}

Even though the evaders are gathered initially, the pressure from the drivers may violate the flocking of the evaders. In Figure \ref{fig:feedback1}, the trajectories of the system and other indicators are shown in the simulation with $16$ evaders, leading to the target $(4,4)$. We can observe that the diameter of evaders' position increases in the pursuing dynamics. This is from the central interaction between the driver and the evader, where the evaders get closer in the direction of $(\bu_{dj} - \bu_{ec})$ but separates with respect to $(\bu_{dj} - \bu_{ec})^\perp$.

\begin{figure}[ht]
  \centering
  {
    \includegraphics[width=0.7\textwidth]{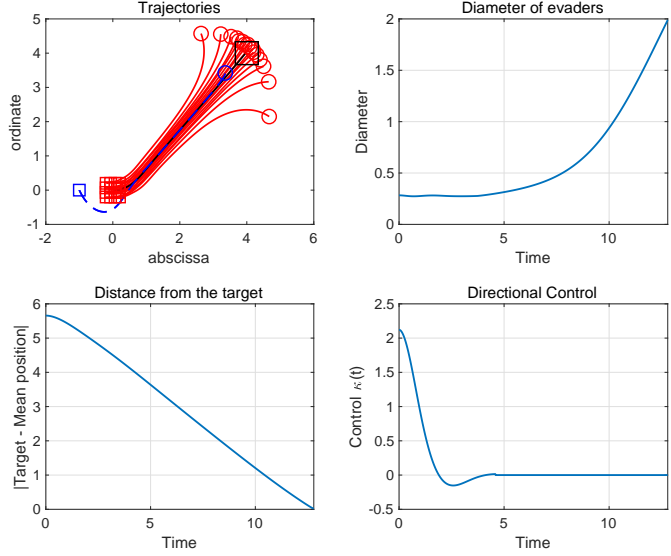}
  }
  \caption{Trajectories, diameter, distance and control of the system \eqref{GBR_general} with $16$ evaders and the feedback control \eqref{feedback1}, leading to $\bu_{ec}(t_f) \simeq (4,4)$ with 16 evaders. Blue, red and black lines indicate the driver, the evaders and the barycenter of evaders, respectively.}
  \label{fig:feedback1}
\end{figure}

For the process of gathering, we adopt a stopping strategy for the drivers as in the release mode. If we set $\kappa_j^p(t)$ to be zero, then the drivers stop the accelerations and the evaders escape away to get enough distance from the drivers. After that, they get close each other and flock again.

Figure \ref{fig:feedback3} shows the trajectory with the feedback law
\begin{equation}\label{feedback2}
\kappa^p_j(t) = \begin{cases} 0 \quad\text{if }~ \max_i|\bu_{ei} - \bu_{ec}| > 0.3 \text{ and until it goes below } 0.27, \\ 1 \quad \text{otherwise}, \end{cases}
\end{equation}
where the values $0.27$ and $0.3$ are determined by hand according to the number of evaders, the interaction $\psi_e$, and the friction $\nu$.

\begin{figure}[ht]
  \centering
  {
    \includegraphics[width=0.7\textwidth]{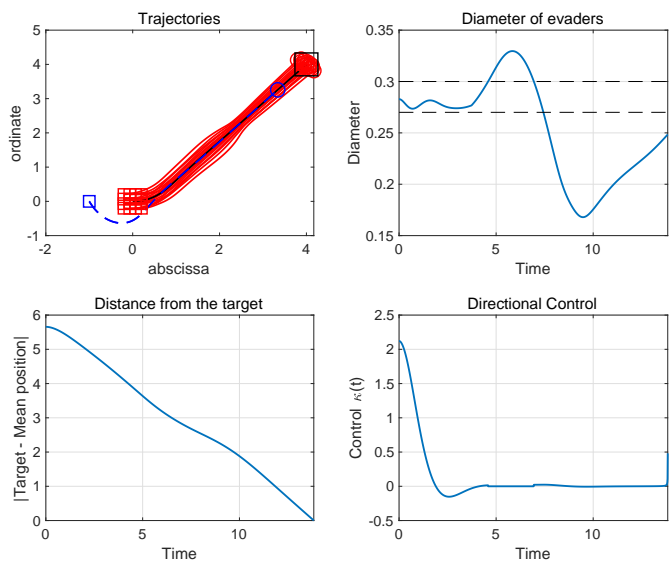}
  }
  \caption{Trajectories, diameter, distance and control of the system \eqref{GBR_general} with the feedback \eqref{feedback1} and \eqref{feedback2} and the same target as in Figure \ref{fig:feedback1}.}
  \label{fig:feedback3}
\end{figure}

Note that the diameter of evaders are not increasing much when the driver stops in Figure \ref{fig:feedback3}. The dynamics of diameter depends on the inertial and friction values, where $\nu=2$ is enough to see the diameter stabilized. The simulation works robust when the position of evaders are initially gathered and the friction coefficients are large.

\begin{figure}[ht]
  \centering
  {
    \includegraphics[width=0.7\textwidth]{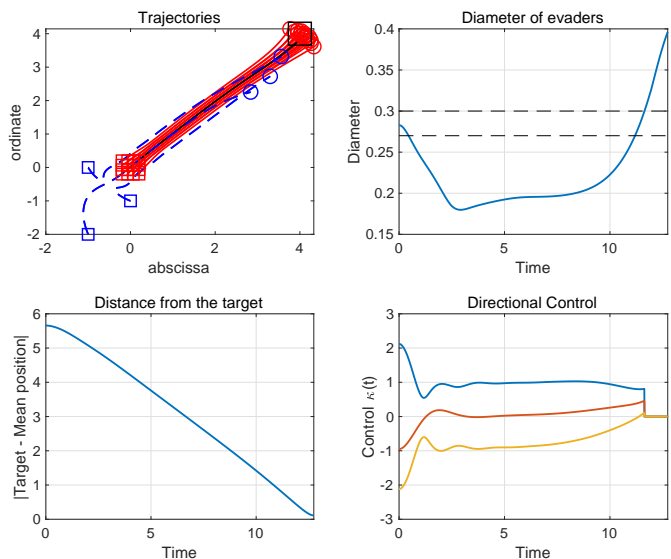}
  }
  \caption{Trajectories, diameter, distance and control of the system \eqref{GBR_general} with the feedback \eqref{feedback1} and \eqref{feedback2}, where there are $3$ drivers from the initial positions $(-1,0)$, $(-1,-2)$ and $(0,-1)$.}
  \label{fig:feedback4}
\end{figure}

The feedback controls \eqref{feedback1} and \eqref{feedback2} also work properly with many drivers, as we can see in Figure \ref{fig:feedback4}. Though we applied the same control function \eqref{feedback1} for each driver, they have different trajectories and values of controls from their different positions and interactions between them. Since the drivers are separated in the perpendicular direction toward the target, the diameter of evaders is more stable compared to Figure \ref{fig:feedback3}. Note that the diameter exceeds the reference value $0.3$ near the time $t=12$, while it was near $t=5$ in Figure \ref{fig:feedback3}.

\vspace{1em}
\section{Conclusions and open problems}\label{sec:con}

The guidance-by-repulsion model consists of repulsive interactions between two types of agents, evaders and drivers, where we want to herd the evaders by manipulating the trajectories of the drivers. This control problem is not classical since the goal is to see whether the evaders' trajectories pass near a given point. Moreover, since the control focuses on the perpendicular direction, it is a kind of bilinear control problem. 

In a sequence of simulations, we showed that the multi-driver and multi-evader system is close to the problem with one driver and one evader, especially for the driving process to the target. The drivers take places behind the target, and push the evaders with nearly linear trajectories.

To analyze it in a closer look, we simplified the model to the case of one driver and one evader, with the same friction coefficients. We present globally stable steady states for constant control functions, and the off-bang-off control turns out to be able to steer the evader's position.
However, the arguments we used relies on the Lyapunov functions from linear theory, which are not easy to extend to general cases.
It makes several interesting questions on the herding problem.

\subsection{General friction coefficients}
We assumed the same dissipation assumption that the frictions in the model \eqref{GBR_general} take the same value $\nu$. In this way, we can simplify the asymptotic motion in terms of the relative position. 

Of course, different coefficients are desirable in real applications. With small perturbations of dissipations, simulations show nearly the same dynamical properties. However, even in the linear model, finding the necessary and sufficient condition for the the stability is a difficult problem.

Moreover, when the sheep or dogs expect the next movement of others, the effect of anticipation may generate nonlinear damping effect on the equations \cite{shu2019anticipation}. When we consider large number of sheep, it is natural to consider this since it also enhance the gathering of sheep.

\subsection{Global boundedness of the relative distance}
Unfortunately, only finite-time boundedness is shown in Theorem \eqref{T2.1}, when the control functions are not constant. This is from the technical difficulties of the second order potential dynamics with a source term, which can not be observed in the gradient flow or the first order model.

In the linearized model, we may build a Lyapunov function which grows quadratically and monotonically nonincreasing near infinity. Also for the opposite case, we can built a Lyapunov function which is infinity at zero and monotonically nonincreasing near zero. However, this can not guarantee the uniform boundedness since the relative distance may oscillate from small values to the large values with increasing amplitude.

This is related to the collision-avoidance problem in a collective behavior model, whether there exists a lower bound of the relative distances between particles for a given unbounded interaction kernel.

\subsection{The reference position of evaders}
For the multi-evader system, we assumed that the drivers interact with the barycenter of the evaders. This makes all the drivers follow the same point in spite of their current positions. However, it also causes strange behavior, for example, we need a fast rotational motion for drivers to escape from the ensemble of the evaders.
 For a practical model, we need to determine the sight of each driver and a reasonable tracking point, which is a nontrivial problem.

\subsection{Feedback control which actively gathers the evaders}
In order to gather the evaders, in the feedback control \eqref{feedback2}, we used $\kappa^p_j(t)$ to stop the pursuit of the drivers and wait the natural flocking behavior of the evaders. This works well when the evaders are relatively close to each other since the interaction of evaders rapidly decreases along the distances.

 However, if the ensemble of evaders is separated and hard to flock together initially, then this strategy does not work. In this situation, we need a rotational motion around the evaders to gather the evaders in an active way. This strategy can be observed in the optimal control as in Figure \ref{fig:simus9}-\ref{fig:simus10}, but hard to construct feedback control in the model \eqref{GBR_general}.

\bibliographystyle{acm}
\bibliography{biblio.bib}

\end{document}